\def\given{\,|\,}
\def\tr{\mathop{\text{tr}}\kern.2ex}
\def\P{{ P}}
\def\E{{ E}}
\def\R{{\mathbb R}}
\def\Z{{\mathbb Z}}
\def\d{{\mathtt d}}
\newcommand{\sfi}{\mathsf{i}}
\newcommand{\dCov}{\mathrm{dCov}}
\renewcommand{\Pr}{{ P}}
\newcommand{\card}{\mathrm{card}}
\newcommand{\mbinom}{\binom}
\newcommand{\zahl}[1]{\llbracket #1\rrbracket}
\newcommand\yestag{\addtocounter{equation}{1}\tag{\theequation}}
\newcolumntype{L}[1]{>{\raggedright\let\newline\\\arraybackslash\hspace{0pt}}m{#1}}
\newcolumntype{C}[1]{>{  \centering\let\newline\\\arraybackslash\hspace{0pt}}m{#1}}
\newcolumntype{R}[1]{>{ \raggedleft\let\newline\\\arraybackslash\hspace{0pt}}m{#1}}
\newcolumntype{d}[1]{D{.}{.}{#1}}
\newcolumntype{H}{>{\setbox0=\hbox\bgroup}c<{\egroup}@{}}
\newcolumntype{Z}{>{\setbox0=\hbox\bgroup}c<{\egroup}@{\hspace*{-\tabcolsep}}}
\numberwithin{equation}{section}
\newtheorem{theorem}{Theorem}[section]
\newtheorem{lemma}{Lemma}[section]
\newtheorem{proposition}{Proposition}[section]
\newtheorem{definition}{Definition}[section]
\providecommand{\customgenericname}{}
\newcommand{\newcustomtheorem}[2]{%
  \newenvironment{#1}[1]
  {%
   \renewcommand\customgenericname{#2}%
   \renewcommand\theinnercustomgeneric{##1}%
   \innercustomgeneric
  }
  {\endinnercustomgeneric}
}
\theoremstyle{definition}
\newtheorem{example}{Example}[section]
\newtheorem{remark}{Remark}[section]
\newcommand{\nb}[1]{#1}
\def\namedlabel#1#2{\begingroup
    #2%
    \def\@currentlabel{#2}%
    \phantomsection\label{#1}\endgroup
}
\renewcommand{\theequation}{\thesection.\arabic{equation}}
\begin{document}

\setlength{\abovedisplayskip}{5pt}
\setlength{\belowdisplayskip}{5pt}
\setlength{\abovedisplayshortskip}{5pt}
\setlength{\belowdisplayshortskip}{5pt}
\hypersetup{colorlinks,breaklinks,urlcolor=blue,linkcolor=blue}

\title{\LARGE Distribution-free consistent independence tests via \nb{center-outward ranks and signs}}

\author{
Hongjian Shi\thanks{Department of Statistics, University of Washington, Seattle, WA 98195, USA; e-mail: {\tt hongshi@uw.edu}},~~
Mathias Drton\thanks{Department of Mathematics, Technical University
  of Munich, 85748 Garching b. M\"unchen, Germany; e-mail: {\tt mathias.drton@tum.de}},~~and~
Fang Han\thanks{Department of Statistics, University of Washington, Seattle, WA 98195, USA; e-mail: {\tt fanghan@uw.edu}}.
}

\date{}

\maketitle

\vspace{-1.55em}

\begin{abstract} 
  This paper investigates the problem of testing independence of two
  random vectors of general dimensions.  For this, we give for the
  first time a distribution-free consistent test.  Our approach
  combines distance covariance with \nb{the center-outward ranks 
  and signs developed in} \citet{hallin2017distribution}.  In technical
  terms, the proposed test is consistent and distribution-free in the family of
  multivariate distributions with nonvanishing (Lebesgue) probability
  densities.  Exploiting the (degenerate) U-statistic structure of the
  distance covariance and the combinatorial nature of Hallin's 
  \nb{center-outward ranks and signs}, 
  we are able to derive the limiting null distribution of our test
  statistic.  The resulting asymptotic approximation is accurate
  already for moderate sample sizes and makes the test implementable
  without requiring permutation.  The limiting distribution is derived
  via a more general result that gives a new type of
  combinatorial non-central limit theorem for double- and
  multiple-indexed permutation statistics.
%  Testing joint independence of two random vectors of any dimension is a
%  fundamental statistical challenge. Recently, several tests based on either
%  distance covariance/correlation \citep{MR2382665,MR2752127} or 
%  Hilbert-Schmidt independence criterion \citep{MR2255909,MR2249882,NIPS2007_3201} 
%  that are consistent against all alternatives have been proposed, 
%  while all of them require permutation based test procedures.
%  We propose a new and consistent test using the so-called center-outward 
%  multivariate rank newly developed by \citet{hallin2017distribution}.
%  Built upon a combinatorial non-central limit theorem, 
%  the asymptotic null distribution of test statistic is shown to be of an explicit form.
%  The proposed test is distribution-free in the family of 
%  multivariate distributions with nonvanishing (Lebesgue) probability densities,
%  implementable without requiring a procedure of permutation, 
%  and has effective empirical performance illustrated by simulation studies.
\end{abstract}

{\bf Keywords:} Combinatorial non-central limit theorem, degenerate 
U-statistics, distance covariance, \nb{center-outward ranks and signs}, 
independence test.

\section{Introduction}
\label{sec:introduction}

Let \nb{$\mX\in\R^p$ and $\mY\in\R^q$} be two real random vectors \nb{defined on the same (otherwise unspecified) probability space}. 
This paper treats the problem of testing the null hypothesis
\begin{align}\label{eq:H0}
H_0: \mX\text{ and }\mY\text{ are independent},
\end{align}
based on $n$ independent copies $(\mX_1,\mY_1),\ldots,(\mX_n,\mY_n)$
of $(\mX,\mY)$.  Testing independence is a fundamental statistical
problem that has received much attention in literature.

For the simplest instance, the bivariate case with $p=q=1$,
\nb{\citet{MR0004426},} \citet{MR0029139}, \citet{MR0125690},
\citet{yanagimoto1970measures}, \citet{feuerverger1993},
\citet{MR3178526}, among many others, have proposed tests that are
consistent against all alternatives from slightly different but rather
general classes of distributions.  The tests are usually
formulated using (univariate) ranks of the
data, %and distribution-freeness of the test could often be expected.
%proposed a test that is consistent against all dependent alternatives in the class of absolutely continuous bivariate distributions. \citet{MR0125690} developed Hoeffding's idea to introduce a test that is consistent against all dependent bivariate alternatives; see also \citet{MR0004426}. \citet{feuerverger1993} raised a consistent bivariate rank test by considering the properties of empirical characteristic functions. \citet{MR3178526} made use of a newly developed rank-based statistic, which coincides with the one raised implicitly by  given marginal continuity, to propose a test of independence that is consistent in the class of bivariate distributions that are discrete, continuous, or a mixture of both. 
although recently more tests were proposed based on alternative
summaries of the data, including (i) binning approaches based on a
partition of the sample space
\citep{MR3068450,MR3491123,MR3941252,zhang2019bet}, (ii)  mutual
information \citep{MR2096503,MR3200177,MR3992389}, and (iii) the maximal information coefficient \citep{Reshef1518,MR3595146,MR3773388}.

Testing independence of $\mX$ and $\mY$ consistently when one or both
of the dimensions $p$ and $q$ are larger than one is substantially
more challenging, as noted in \citet[Sec.~7]{feuerverger1993}. 
Solutions have not been discovered until much 
more recently.  Two tracks were pursued.  First, \citet{MR2382665}
generalized \citeauthor{feuerverger1993}'s statistic to multivariate
cases and proposed a new dependence measure termed ``distance
covariance''.  It has been shown that
%For the multivariate independence, \citet{feuerverger1993} stated the difficulty in extending his test statistic to dimension higher than two. \citet{MR2382665} successfully generalized \citeauthor{feuerverger1993}'s statistic to multivariate case. %by finding a new suitable weight function. A slightly different version has been considered by the same authors previously \citep{MR2298886}. U
under the existence of finite marginal first moments, the distance
covariance is zero if and only if $H_0$ holds.  For further extensions, \citet{MR3127883} generalized distance covariance/correlation to general metric spaces, and \citet{jakobsen2017distance} considered the corresponding test of independence in metric spaces.

The second track to characterize non-linear, non-monotone dependence is based on the maximal correlation introduced in \citet{hirschfeld_1935} and \citet{MR0007220}, reformulated and examined by \citet{MR0111074,MR0115203}. %This concept can be easily generalized to the multivariate case.
\citet{gretton:aistats2005,MR2255909,MR2249882} extended this idea to
examine multivariate cases, resulting in the  Hilbert-Schmidt
independence criterion (HSIC), which is a consistent kernel-based measure of
dependence in multivariate cases. Interestingly, \citet{NIPS2007_3201}
connected HSIC with a Gaussian kernel to the characteristic
function-based statistic raised in \citet{feuerverger1993}, and
\citet{MR3127866} pointed out the equivalence between distance
covariance in general metric spaces and the kernel-based independence criterion. 

A notable feature of both distance- and kernel-based statistics
is that their null distributions depend on the
distributions of $\mX$ and $\mY$ even in the large-sample limit.  This
dependence arises already for $p=q=1$ and is usually difficult to estimate. As a consequence, the
tests are, unlike the rank tests of, e.g., \citet{MR0029139} and
\citet{MR0125690}, no longer distribution-free and permutation
analysis has to be conducted to implement
them. %, which makes the computation heavy.
To remedy this problem,
%The asymptotic null distributions of both distance-based and kernel-based statistics are of similar forms, which depend on distributions of $\mX$ and $\mY$ and are difficult to estimate. Accordingly,
\citet{MR2382665} proposed a nonparametric test based on distance
correlation by applying a universal upper tail probability bound for
all quadratic forms of centered Gaussian random variables that have
their mean equal to one \citep{MR1990053}. However, in practice this
upper bound is usually too conservative for the approach to be a
competitor to the computationally much more expensive permutation test
\citep{MR2752127,NIPS2007_3201}.
%This calls for new test statistics with (asymptotic) null distributions that are of accurate forms or can be easily estimated. 
This triggers the following question:  For general $p,q> 1$, does there exist an asymptotically accurate consistent test of $H_0$ that is distribution-free and hence directly implementable?

Rank-based tests constitute a natural approach to answering the above
question.
% have many appealing properties, and thus have been considered to test multivariate independence before.
Indeed, %in literature
in contrast to \citet{MR2752127}, \citet{MR2752133} claimed that the
methods based on marginal ranks are effective and as powerful as
original ones when the sample size is moderately large and this idea
has been explored in depth in \citet{lin2017copula}.  However,
\citet{MR2298886} \nb{noted that the methods based on marginal ranks
do not enjoy distribution-freeness except in dimension one},
which is also recorded in, e.g., Theorem 2.3.2 in
\citet{lin2017copula}. Using the idea of projection from
\citet{MR2328527}, \citet{MR3737307} generalized Hoeffding's $D$ \citep{MR0029139} to
multivariate cases, and \citet{kim2018robust} proposed the analogues
of Blum--Kiefer--Rosenblatt's $R$ \citep{MR0125690} and Bergsma--Dassios--Yanagimoto's
$\tau^*$ \citep{yanagimoto1970measures,MR3178526,drton2018high}.  \citet{MR3842884} proposed other multivariate extensions of
Hoeffding's $D$, Blum--Kiefer--Rosenblatt's $R$, and
Bergsma--Dassios--Yanagimoto's $\tau^*$, and did numerical studies
comparing them to distance covariance applied to marginal ranks.
Alternatively, \citet{MR3068450} developed a consistent multivariate
test based on ranked distance covariance by transferring the original
problem to testing independence of an aggregated $2\times 2$
contingency table.  However, all the aforementioned tests are not
distribution-free when $p$ or $q$ is larger than $1$, and due to the
difficulty of accounting for the dependence within $\mX$ and $\mY$,
permutation analysis is required for their implementation. 
On the other hand,  \citet{MR2956796} and \citet{NIPS2016_6220} introduced distribution-free graph-based and rank-based tests. However, it is unclear if the former is consistent, and the latter requires choosing two arbitrary reference points. The latter test is almost surely consistent in the sense that the choice of reference points needs to avoid an (unknown) measure zero set.

This paper proposes a solution to the above question by combining
Sz\'ekely, Rizzo, and Bakirov's distance covariance with a recently
defined \nb{concept of} multivariate ranks due to \citet{hallin2017distribution}.
%hereafter called Hallin's (multivariate) rank. %As have been noted before, d
Due to the lack of a canonical ordering on $\R^d$ for $d>1$,
fundamental concepts related to distribution functions in dimension
$d=1$, such as ranks and quantiles, do not admit a simple extension
for $d\ge 2$ that maintains properties such as distribution-freeness.
To overcome this limitation, several types of multivariate ranks have
been introduced; see \citet[Sec.~1.3]{hallin2017distribution} and, more recently, \cite{ghosal2019multivariate} for a
literature review.  None of them, however, is distribution-free except
for pseudo-Mahalanobis ranks \citep{MR1926170,MR2001322}, but these
are restricted to the class of elliptically symmetric distributions
\citep{MR1071174}.  Recently, \citet{MR3611491} introduced the concept
of Monge--Kantorovich ranks \nb{and signs} for all distributions with convex and
compact supports, %(hence finite moments of all orders),
which is the first type of multivariate ranks that enjoys
distribution-freeness for a rich class of distributions.
\citet{hallin2017distribution} generalized this definition %further
by
refraining from moment assumptions and making the solution more
explicit\nb{.  He also adopted the new terminology \emph{center-outward ranks and signs}}.
\nb{\citet{hallin2020distribution} further showed that center-outward ranks
and signs are not only distribution-free, but also essentially maximal ancillary,
which can be interpreted as ``maximal distribution-free'' in view of \citet{MR110115}.}
As shall be seen soon, the explicit nature of the solution
is important as it allows for more delicate manipulations and
ultimately allows us to form a test statistic of $H_0$ whose limiting
null distribution can be determined.  The limiting distribution
furnishes an accurate approximation to the statistic's null
distribution already for moderate sample sizes and allows us to avoid
computationally more involved permutation analysis.

% , and adopted the new terminology center-outward ranks.
%and \citet{del2018smooth} completed Hallin's definitions by constructing smooth cyclically monotone interpolations for empirical center-outward distribution functions.

In detail, our proposed test is based on applying distance covariance
to \nb{center-outward ranks and signs}. We show that the test is consistent
and distribution-free over the class of multivariate distributions
with nonvanishing (Lebesgue) probability densities; see
Section~\ref{sec:prelim} for the precise definition of this class. The
consistency is a consequence of a result of \citet{MR3886582}.  In
light of the prior work of \citet{MR2382665},
\citet{hallin2017distribution}, and \citet{MR3886582}, our major new
discovery is the form of the limiting null distribution of the test
statistic, which is established with all parameters given explicitly.
To this end, we study the weak convergence of U-statistics with a
``degenerate'' kernel and dependent (permutation) inputs, and derive a
general combinatorial non-central limit theorem (non-CLT) for double-
and multiple-indexed permutation statistics.  This theorem
is new and of independent interest beyond our particular application
of asymptotic calibration of the size of the independence test under
$H_0$.
% and accordingly allows us to
% directly implement the test without resorting to a computationally
% expensive permutation procedure.
%We also conduct some preliminary power analysis and evaluate the rate-optimality of our test in the class of Gaussian distributions when $p=q=1$. 

As we were completing this manuscript, we became aware of an
independent work by \citet{deb2019multivariate} who also proposed a
rank-distance-covariance-based independence test.  Their preprint was
posted a few days before ours and presents, in particular, a result
very similar to our Theorem~\ref{thm:null}. The derivations differ
markedly, however. \citeauthor{deb2019multivariate}'s proof uses
techniques based on characteristic functions, whereas we develop a
general combinatorial non-CLT theorem for double- and multiple-indexed
permutation statistics that can be applied to the considered statistic
as well as possible modifications.  There are further differences in
the precise setup of multivariate ranks\nb{: while} we base ourselves directly on recent work by \citet{hallin2017distribution} and by \citet{MR3886582}, \citet{deb2019multivariate} \nb{considered transports to the unit cube rather than the unit ball (see Definition~\ref{def:centerdistr} below) and} present weakened assumptions in the definition of the ranks.

The rest of the paper is organized as
follows. Section~\ref{sec:prelim} introduces \nb{center-outward 
ranks and signs}, and Section~\ref{sec:tests} specifies the proposed
test. Section~\ref{sec:theory} gives the theoretical analysis,
including the combinatorial non-CLT and a study of the proposed test.
Computational aspects are discussed in
Section~\ref{sec:computation}, and numerical studies of the
finite-sample behavior of our test {and an analysis of stock market data} are presented
in Section~\ref{sec:simulation}. All proofs are relegated to a
supplement.

\paragraph{Notation.}

The sets of real and positive integer numbers are denoted
$\R$ and $\Z_+$, respectively.  
For $n\in\Z_+$, we define $\zahl{n}=\{1,2,\ldots,n\}$.
%For $m\in\zahl{n}$, we define $(n)_m=n!/(n-m)!$ and $I^{n}_{m}=\{(i_1,\dots,i_m): i_j\in\zahl{n},~i_j\ne i_k~\text{if}~j\ne k\}$.
%and write $\mathcal{P}_n$ for the set of all $n!$ permutations of $\zahl{n}$. 
%The cardinality of a set $\cS$ is written as $\#\cS$. 
We write $\{x_1,\dots,x_n\}$ and $\{x_i\}_{i=1}^{n}$ for the
multiset consisting of (possibly duplicate) elements $x_1,\dots,x_n$.
%$[\underbrace{x_1,\dots,x_1}_{m_1},\dots,\underbrace{x_k,\dots,x_k}_{m_k}]$,
%$[x_1^{(m_1)},\dots,x_k^{(m_k)}]$, and $[x_i^{(m_i)}]_{i=1}^{k}$ denote the multiset consisting of $m_i$ copies of $x_i$ for all $i\in\zahl{k}$, 
%$[y_1,\dots,y_n]$ and $[y_i]_{i=1}^{n}$ denote the multiset consisting of elements $y_1,\dots,y_n$ (not necessarily distinct).
We use $[x_1,\dots,x_n]$ %, $[x(1),\dots,x(n)]$, 
and $[x_i]_{i=1}^{n}$ to denote sequences. %\fbox{revise it}
%Let $(n)_r$ denote $n!/(n-r)!$, and
%$I_{S}^{r}$ denote the set of all $(\#S)_r$ possible permutations (without repetition) of size $r$ from set $S$.
%$I_{n}^{r}$ denote the set of all $(n)_r$ possible permutations (without repetition) of size $r$ from set $\zahl{n}$.
A permutation of a multiset $\cS=\{x_1,\dots,x_n\}$ is a sequence $[x_{\sigma(i)}]_{i=1}^{n}$, 
%and an $r$-permutation of $\cS$ is a sequence $(x_{\sigma(i)})_{i=1}^{r}$ where $r\le n$,
where $\sigma$ is a bijection from $\zahl{n}$ to itself.
%For a multiset $\cM=[y_1,\dots,y_n]$,
%a permutation of $\cM$ is a sequence $(y_{\sigma(i)})_{i=1}^{n}$.
The family of all distinct permutations of a multiset $\cS$ is denoted $\rP(\cS)$. %\fbox{so all elements of $\rP(\cS)$ are sequences, right?}
%and $\rP_r(\cS)$ denote the family of all distinct $r$-permutations of a set $\cS$. 
The Euclidean norm of $\mv\in\R^d$ is written $\norm{\mv}$. %\fbox{no subscript 2} % and 
%Let $\fM=[M_{jk}]\in \R^{d\times d}$ be a real $d\times d$ matrix. 
%and $I,J$ be two subsets of $\zahl{d}$.  
%Then $\mv_I$ is the sub-vector of $\mv$ with entries
%indexed by $I$, i.e., $\mv_I=(v_{i_1},v_{i_2},\ldots,v_{i_{\# I}})^\top$ with $i_1<i_2<\cdots<i_{\# I}$ and $\{i_1,\ldots,i_{\# I}\}=I$. Both $\fM_{I,J}$ and $\fM[I,J]$ are used to refer to the sub-matrix of $\fM$ with 
%rows indexed by $I$ and columns indexed by $J$. 
%For $0<s<\infty$, we define the  $L_s$-norm of the vector $\mv$ as 
%$\norm{\mv}_s:=(\sum |v_i|^s)^{1/s}$.
%Let $\norm{\fM}_{\max}:=\max_{jk}|M_{jk}|$ be the matrix elementwise maximum norm. 
%The matrix ${\rm diag}(\fM)\in \R^{p\times p}$ is the diagonal matrix
%whose diagonal is the same as that of $\fM$.  
We write $\fI_d$ and $\fJ_d$ for the identity matrix 
and all-ones matrix in $\R^{d\times d}$, respectively. For a sequence of vectors $\mv_1,\ldots,\mv_d$, we use $(\mv_1,\ldots,\mv_d)$ as a shorthand of $(\mv_1^\top,\ldots,\mv_d^\top)^\top$.
For a function $f:\cX\to \R$, we define
$\norm{f}_{\infty}:=\max_{x\in\cX}|f(x)|$.  
The greatest integer less than or equal to $x\in\R$ is denoted $\lfloor x\rfloor$.  
The symbol $\ind(\cdot)$ stands for the indicator function.  
Throughout, $c$ and $C$ refer to positive absolute constants
whose values may differ in different parts of the paper.
For any two real sequences $[a_n]_n$ and $[b_n]_n$, we
write $a_n=O(b_n)$ if there exists $C>0$ such that $|a_n|\leq C|b_n|$
for all $n$ large enough,
%For any two real sequences $\{a_n\}$ and $\{b_n\}$, we
%write $a_n \lesssim b_n$, $a_n=O(b_n)$, or equivalently
%$b_n \gtrsim a_n$, if there exists $C>0$ such that $|a_n|\leq C|b_n|$
%for any large enough $n$. We write $a_n \asymp b_n$ if both
%$a_n \lesssim b_n$ and $a_n\gtrsim b_n$ hold. We 
and $a_n=o(b_n)$ if for any $c>0$, $|a_n|\leq c|b_n|$ holds for all
$n$ large enough.  The symbols $\bS_d$, $\overline\bS_d$, and
$\cS_{d-1}$ stand for the open unit ball, closed unit ball, and
unit sphere in $\R^d$, respectively.
%$X\stackrel{\sf d}{=}Y$ if two random variables $X$ and $Y$ have the
%same distribution, and 
We use  $\stackrel{\sf d}{\longrightarrow}$ and 
%$\stackrel{\sf p}{\longrightarrow}$, $\stackrel{\sf L^{r}}{\longrightarrow}$, 
$\stackrel{\sf a.s.}{\longrightarrow}$ to denote convergence in distribution
% convergence in probability, convergence in the $r$-th mean, 
and almost surely. For any random vector $\mX$, we use $P_{\mX}$ to represent its probability measure.

%\fbox{double check if all notation has been used}

\section{\nb{Center-outward ranks and signs}}\label{sec:prelim}

In this section, we introduce necessary background on \nb{center-outward ranks and signs}.
%the concepts of center-outward distribution functions, ranks, and empirical counterparts that will be used to construct our test statistic. 
As in \citet{hallin2017distribution}, we will be focused on the family
of absolutely continuous distributions on $\R^d$ that have a
nonvanishing (Lebesgue) probability density
(Definition~\ref{def:nonvanish} below).  In what follows it is
understood that the dimension $d$ could be larger than 1 and that all
considered probability measures are fixed, and not to be changed with
the sample size 
$n$ in particular.

\begin{definition}
  \label{def:nonvanish}
  Let $P$ be an absolutely continuous probability measure on $\R^d$
  with (Lebesgue) density $f$.  Such $P$ is said to be a
  \emph{nonvanishing probability measure/distribution} if for all
  $D>0$ there exist constants $\Lambda_{D;f}\ge\lambda_{D;f}>0$ such
  that $\lambda_{D;f}\le f(\mx)\le \Lambda_{D;f}$ for all
  $\lVert\mx\rVert\le D$.  We write $\cP_d$ for the family of all
  nonvanishing probability measures/distributions on $\R^d$.
% Stroock (1999) p.11 / Wikipedia page
\end{definition}

The considered generalization of ranks to higher dimensions rests on the
following concept of a center-outward distribution function, whose
existence and \nb{almost everywhere} uniqueness within the family $\cP_d$ is
guaranteed by the Main Theorem in \citet[p.~310]{MR1369395}.

\begin{definition}[Definition 4.1 in \citealp{hallin2017distribution}]
  \label{def:centerdistr}
  The \emph{center-outward distribution function} $\fF_{\pm}$ of a
  probability measure $P\in\cP_d$ is the \nb{almost everywhere} 
  unique function that (i) is 
  the gradient of a convex function on $\R^d$, (ii) maps $\R^d$ to the
  open unit ball $\bS_d$, and (iii) pushes $P$ forward to $U_d$, where
  $U_d$ is the product of the uniform measure on $[0,1)$ (for the
  radius) and the uniform measure on the unit sphere $\cS_{d-1}$.
  To be explicit, property (iii) requires
  $U_d(B)=P(\fF_{\pm}^{-1}(B))$ for any Borel set $B\subseteq \bS_d$.
%The corresponding center-outward quantile function is $\fQ_{\pm}:=\fF_{\pm}^{-1}$. We also define quantile regions as $\bC(q):=\fQ_{\pm}(q\overline\bS_d)$ and  quantile contours as $\cC(q):=\fQ_{\pm}(q\cS_{d-1})$. The elements of $\fQ_{\pm}(\mathbf{0})$ are called center-outward medians.
\end{definition}

If $\mX\sim P\in\cP_d$ and we further have
$\E\lVert\mX\rVert^2<\infty$, then
% an explicit form of $\fF_{\pm}$ exists. As a matter of fact, in this regard,
the center-outward distribution function $\fF_{\pm}$ of $P$ coincides with the $L_2$-optimal transport  from $P$ to $U_d$ \citep[Theorem 9.4]{MR2459454}, i.e., it is the \nb{almost everywhere} unique solution to the following optimization problem,
\begin{equation}\label{eq:monge}
\inf_{T}\int_{\R^d}\Bigl\Vert T(\mx)-\mx\Bigr\Vert^2 dP~~~\text{subject to }T_{\sharp} P=U_d,
\end{equation}
where $T_{\sharp} P$ denotes the push forward of $P$ under map $T$.
In other words, the optimization is done over all Borel-measurable
maps from $\R^d$ to $\R^d$ pushing $P$ forward to $U_d$.  Assuming
further that the Caffarelli's regularity conditions including
compactness of support \citep[Lemma~2.1]{MR3611491} hold, $\fF_{\pm}$
coincides with the Monge--Kantorovich vector rank transformation $R_P$
proposed in Definition 2.1 in \citet{MR3611491}.  Lastly, it can be
easily checked that when $d=1$, $\fF_{\pm}$ reduces to $2F-1$, where
$F$ is the usual cumulative distribution function.

In dimension $d=1$, the distribution function $F$ determines the
underlying probability distribution $P$.  A natural question is then
whether $\fF_{\pm}$ similarly preserves all information about a
distribution $P\in\cP_d$ when $d>1$.  That this is indeed the case
turns out to be highly nontrivial, and was not resolved until very
recently. The following proposition shows that $\fF_{\pm}$ is
a homeomorphism from $\R^d$ to $\bS_d$ \nb{except for a compact set with Lebesgue
measure zero}, indicating that all the
information about the probability measure $P\in\cP_d$ can be captured
using $\fF_{\pm}$.  This proposition will play a key role in our later
justification of the consistency of our proposed test (Theorem
\ref{thm:consi}).

\begin{proposition}[Theorem 1.1 in \citealp{MR3886582}; Propositions 4.1, 4.2 in \citealp{hallin2017distribution}]\label{prop:Figalli}
  Let $P\in\cP_d$, with center-outward distribution function
  $\fF_{\pm}$. Then,
\begin{enumerate}[label=(\roman*),itemsep=-.5ex]
\item $\fF_{\pm}$ is a probability integral transformation of $\R^d$, that is, $\mX\sim P$ iff $\fF_{\pm}(\mX)\sim U_d$;
\item The set $\fF_{\pm}^{-1}(\bm{0})$ is compact and of Lebesgue
  measure zero.  The restrictions of $\fF_{\pm}$ and $\fF_{\pm}^{-1}$
  to $\bR^d\backslash \fF_{\pm}^{-1}(\bm{0})$ and
  $\bS_d\backslash \{\bm{0}\}$ are homeomorphisms between
  $\R^d\backslash \fF_{\pm}^{-1}(\bm{0})$ and
  $\bS_d\backslash \{\bm{0}\}$.  If $d=1,2$, then the set
  $\fF_{\pm}^{-1}(\bm{0})$ is \nb{a} singleton, and $\fF_{\pm}$ and
  $\fF_{\pm}^{-1}$ are homeomorphisms between $\R^d$ and $\bS_d$.
%\item if $P$ has Lebesgue density $f$, $f(\fx)=c_d^{-1} H_{\varphi^*}(\fx)\lVert\nabla\varphi(\fx)\rVert^{1-d}$, $\fx\in\R^d\backslash K$, where the norming constant $c_d:=2\pi^{d/2}/\Gamma(d/2)$ is the area of the unit sphere $\cS_{d-1}$ and $H_{\varphi^*}$ the Hessian of $\varphi^*$.
%\item for $d=1,2$, $\fF_{\pm}$ and $\fQ_{\pm}$ are homeomorphisms between $\R^d$ and $\bS_d$, respectively, and the center-outward median $\fQ_{\pm}(\mathbf{0})$ is uniquely defined; for $d\ge 3$, the restriction of $\fF_{\pm}$ and $\fQ_{\pm}$ to $\R^d\backslash\fQ_{\pm}(\mathbf{0})$ and $\bS_d\backslash\{\mathbf{0}\}$ are homeomorphisms between $\R^d\backslash\fQ_{\pm}(\mathbf{0})$ and $\bS_d\backslash\{\mathbf{0}\}$, and the center-outward medians form a compact null set;
%\item the quantile regions $\bC(\alpha):=\fF_{\pm}^{-1}(\alpha\overline\bS_d)$, with boundaries $\cC(\alpha):=\fF_{\pm}^{-1}(\alpha\cS_{d-1})$, are connected, compact, and nested as $\alpha\in[0,1)$ increases.
\end{enumerate}
\end{proposition}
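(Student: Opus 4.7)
The plan is to obtain part (i) essentially from the definition, and to secure part (ii) by invoking the optimal-transport regularity theorem of \citet{MR3886582} after checking that the present setup fits into its framework. For (i), the forward direction is precisely property (iii) of Definition~\ref{def:centerdistr}, namely that $\fF_{\pm}$ pushes $P$ forward to $U_d$; the converse follows once part (ii) has established that $\fF_{\pm}$ is almost everywhere a homeomorphism, since the measurable inverse then recovers the law of $\mathbf{X}$ from that of $\fF_{\pm}(\mathbf{X})$.

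The key step for (ii) is to recast the transport problem in Figalli's framework. Since $P\in\cP_d$, the Lebesgue density of $P$ is bounded above and bounded away from zero on every compact set. The target $U_d$, expressed in polar coordinates $(r,\boldsymbol{\omega})\in[0,1)\times\cS_{d-1}$, is the product of Lebesgue measure on $[0,1)$ with the normalized surface measure on $\cS_{d-1}$; in Cartesian coordinates on $\bS_d$ this corresponds to a Lebesgue density proportional to $\lVert\mathbf{u}\rVert^{-(d-1)}$, which is smooth and pinched between two positive constants on any annulus $\{r_1\le\lVert\mathbf{u}\rVert\le r_2\}$ with $0<r_1<r_2<1$, with a controlled singularity only at the origin. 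This density profile meets the hypotheses of Theorem 1.1 in \citet{MR3886582}, and applying that theorem (translating between Brenier potentials and their gradient maps) yields that $\fF_{\pm}$ and $\fF_{\pm}^{-1}$ are homeomorphisms between $\R^d\setminus\fF_{\pm}^{-1}(\mathbf{0})$ and $\bS_d\setminus\{\mathbf{0}\}$. The remaining assertions then follow by shorter arguments: the measure-zero property of $\fF_{\pm}^{-1}(\mathbf{0})$ is immediate from $P$ having a bounded density together with $U_d(\{\mathbf{0}\})=0$, while compactness comes from observing that the subdifferential of the Brenier potential at $\mathbf{0}$ is a closed convex set that is bounded because $\fF_{\pm}$ takes values in the bounded set $\bS_d$ forces the potential to be globally Lipschitz. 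The case $d\in\{1,2\}$ exploits that a compact convex subset of $\R^d$ of Lebesgue measure zero is either a single point (trivially, for $d=1$) or lies on an affine line (for $d=2$), and a connectedness/symmetry argument as in \citet{hallin2017distribution} rules out nondegenerate segments in the latter case.

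The principal obstacle is the appeal to Figalli's theorem: verifying that the asymptotic behavior of the $U_d$ density at the origin and near the boundary sphere matches Figalli's precise hypotheses requires careful bookkeeping, and this is exactly what Propositions 4.1 and 4.2 of \citet{hallin2017distribution} carry out. In practice, the proof of Proposition~\ref{prop:Figalli} amounts to importing those two propositions together with the cited theorem of Figalli, rather than reconstructing either from scratch.
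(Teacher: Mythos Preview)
The paper does not supply its own proof of this proposition: it is recorded purely as a citation of Theorem~1.1 in \citet{MR3886582} and Propositions~4.1--4.2 in \citet{hallin2017distribution}, with no argument given in the appendix. Your proposal correctly identifies this, and your final paragraph is exactly right---the ``proof'' here is to import those results. The sketch you give of \emph{why} those results hold (checking that $U_d$ has the right density profile for Figalli's regularity theorem, deducing the measure-zero and compactness properties of $\fF_{\pm}^{-1}(\mathbf{0})$ from the push-forward relation and Lipschitzness of the Brenier potential, and handling $d\le 2$ via convexity of the subdifferential) is a reasonable outline of the arguments in the cited works, though of course the actual verification of Figalli's hypotheses and the $d=2$ case require the detailed work carried out there.
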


We now move on to estimation of $\fF_{\pm}$ based on $n$ independent
copies of $\mX\sim P\in\cP_d$.  The considered estimator mimics the
empirical version of the Monge--Kantorovich problem \eqref{eq:monge},
and the key step is to ``discretize'' the unit ball $\bS_d$ to $n$
grid points.  In the following we sketch Hallin's approach to the
construction of such a grid point set, with a focus on how to form the grid
points when {$d\ge 2$}. To this end,
%We have the finite sample $(\mX_i)_{i=1}^{n}$ which yields empirical measure weakly converging to $P$, and as a counterpart,
%We  
%with corresponding empirical measure converging to $U_d$ 
%as follows: 
let us first factorize $n$ into the following form, whose existence is clear:
\nb{\begin{align}\label{eq:key}
n = n_R n_S+n_0,
~~~~~~n_R,n_S\in\Z_+,~0\le n_0<\min\{n_R,n_S\},
~~~\text{with}~n_R,n_S\to\infty~~~\text{as}~n\to\infty.
\end{align}
Next, consider $n_R n_S$ intersection points between
\begin{itemize}[label=--,itemsep=-.5ex]
\item the $n_R$ hyperspheres centered at $\bm0$ with radii
  $\frac{1}{n_R+1},\dots,\frac{n_R}{n_R+1}$, and
\item $n_S$ distinct unit vectors $\{\mr_1,\dots,\mr_{n_S}\}$.
\end{itemize}
The unit vectors in $\{\mr_1,\dots,\mr_{n_S}\}$ are selected such
that the uniform discrete distribution on this set converges weakly to
the uniform distribution on $\cS_{d-1}$. For $d=2$, we can choose unit
vectors such that the unit circle is divided into $n_S$ equal
arcs. For $d\ge 3$, the requirement is satisfied almost surely when
independently drawing $n_S$ unit vectors from the uniform distribution on $\cS_{d-1}$. %are such that the uniform discrete distribution on this set almost surely converges weakly to the uniform distribution on $\cS_{d-1}$
Moreover, it is not difficult to give a deterministic construction
that serves our purpose; see Section~B in the supplement.
}

\begin{definition}\label{def:aug}
{When $d\ge2$, }
  the augmented grid
  $\cG^{d}_{n_0,n_R,\nb{n_S}}$ is the multiset consisting of $n_0$
  copies of the origin $\bm{0}$ whenever $n_0>0$ and \nb{the intersection points
  $\{\frac{j}{n_R+1}\mr_{k}: j\in\zahl{n_R}, k\in\zahl{n_S}\}$.}
{When $d=1$, letting $n_S=2$, $n_R=\lfloor n/n_S\rfloor$, and $n_0=n-n_Rn_S$, the augmented grid $\cG^{d}_{n_0,n_R,n_S}$ is the multiset consisting of the origin $0$ whenever $n_0>0$ and the points $\{\pm\frac{j}{n_R+1}: j\in\zahl{n_R}\}$.}
\end{definition}

\begin{proposition}\label{prop:unif}
\nb{As long as the uniform discrete distribution on $\{\mr_1,\dots,\mr_{n_S}\}$ converges weakly to the uniform distribution on $\cS_{d-1}$, }
  the uniform discrete distribution on the augmented grid
  $\cG^{d}_{n_0,n_R,n_S}$, which assigns mass $n_0/n$ to the
  origin and mass $1/n$ to every other grid point, 
%   The distribution with probability mass $n_0/n$ at the origin $\bm{0}$ and probability mass $1/n$ at each of points $\{\mt_{j_R,j_1,\ldots,j_{d-1}}; j_R\in\zahl{n_R},j_1\in\zahl{n_1},\ldots,j_{d-1}\in\zahl{n_{d-1}}\}$ with spherical coordinates $(r_{j_R},\varphi_{1,j_1},\dots,\varphi_{d-1,j_{d-1}})^\top$, where $r_{j}=j/(n_R+1)$ for $j\in\zahl{n_R}$ and $\varphi_{m,j}$'s are defined in \eqref{eq:coord}, 
% called the uniform distribution over the augmented grid $\cG^{d}_{n_0,n_R,\bm{n_S}}$, 
weakly converges to $U_d$.
\end{proposition}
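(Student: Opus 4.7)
The plan is to decompose the uniform discrete distribution $\mu_n$ on $\cG^{d}_{n_0,n_R,n_S}$ as the convex combination
\begin{equation*}
\mu_n \;=\; \frac{n_0}{n}\,\delta_{\bm{0}} \;+\; \frac{n_R n_S}{n}\,\nu_n,
\end{equation*}
where $\nu_n$ denotes the uniform distribution on the $n_R n_S$ intersection points alone. Because $n_0 < \min\{n_R,n_S\}$ and $n\ge n_R n_S$, we have $n_0/n < 1/\max\{n_R,n_S\}\to 0$, so the atom at the origin is asymptotically negligible; combined with $n_R n_S/n\to 1$, this reduces the proposition to showing that $\nu_n$ converges weakly to $U_d$, since $\int f\,d\mu_n=(n_0/n)\,f(\bm{0})+(n_R n_S/n)\int f\,d\nu_n$ for any bounded continuous $f$.

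The key observation for $\nu_n$ is its product structure: $\nu_n$ equals the pushforward under the continuous map $\Phi:(r,\mathbf{s})\mapsto r\mathbf{s}$ from $[0,1]\times\cS_{d-1}$ to $\overline\bS_d$ of the product measure $\nu_n^R\otimes\nu_n^S$, where $\nu_n^R$ is the uniform distribution on $\{j/(n_R+1):j\in\zahl{n_R}\}\subset[0,1)$ and $\nu_n^S$ is the uniform distribution on $\{\mr_1,\dots,\mr_{n_S}\}\subset\cS_{d-1}$. A direct Riemann-sum computation gives that $\nu_n^R$ converges weakly to the uniform distribution on $[0,1)$ (the endpoint $r=1$ carries zero limit mass), while $\nu_n^S$ converges weakly to the uniform distribution on $\cS_{d-1}$ by the standing hypothesis on $\{\mr_k\}$.

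Since weak convergence of two sequences of marginals implies weak convergence of the corresponding product measures (verified by applying Fubini to bounded continuous test functions on $[0,1]\times\cS_{d-1}$), we obtain $\nu_n^R\otimes\nu_n^S$ converging weakly to the product of the uniform distributions on $[0,1)$ and on $\cS_{d-1}$. The continuous mapping theorem applied to $\Phi$ then yields $\nu_n=\Phi_\sharp(\nu_n^R\otimes\nu_n^S)\to U_d$ weakly, since $U_d$ is by definition this pushforward. The case $d=1$ is handled identically and is exact at the level of the spherical marginal: with $n_S=2$ and $\{\mr_1,\mr_2\}=\{-1,+1\}$, $\nu_n^S$ is already exactly uniform on $\cS_0$. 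No genuine obstacle arises; the most delicate bookkeeping is in the product-convergence step, which is textbook.
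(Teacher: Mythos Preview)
Your proof is correct and follows essentially the same approach as the paper's: both arguments reduce to the product structure of the non-origin grid (radial marginal $\times$ spherical marginal), establish weak convergence of each marginal separately, and then combine. The only cosmetic differences are that the paper handles the origin mass by first proving the $n_0=0$ case and then bounding $|\lambda_{0,n_R,n_S}(D)-\lambda_{n_0,n_R,n_S}(D)|\le 2n_0/n$ on $U_d$-continuity sets, whereas you write the convex combination $\mu_n=(n_0/n)\delta_{\bm0}+(n_Rn_S/n)\nu_n$ directly; and the paper invokes Billingsley's product-convergence theorem on the polar-coordinate space, whereas you make the pushforward map $\Phi(r,\mathbf{s})=r\mathbf{s}$ explicit and apply the continuous mapping theorem.
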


%{
%\begin{remark}\label{remark:new1}
%We note that the augmented grid $\cG^{d}_{n_0,n_R,n_S}$ reduces to the one given by \citet[p.~14]{hallin2017distribution} when $d=2$. 
%In addition, we emphasize that the construction of the test does not
%depend on this particular way of choosing a grid. For example, any
%deterministic set of $n$ points in $\bS_d$ in general dimension $d$
%such that the uniform discrete distribution on this set converges
%weakly to $U_d$  would yield a similarly justified test.
%\end{remark}
%}

We are now ready to introduce Hallin's estimator, $\fF_{\pm}^{(n)}$,
of $\fF_{\pm}$.  It is defined via the optimal coupling between the
observed data points and the augmented grid
$\cG^{d}_{n_0,n_R,\nb{n_S}}$.

\begin{definition}[Definition~4.2 in \citealp{hallin2017distribution}]\label{def:empdistr}
Let $\mx_1,\dots,\mx_n$ be data points in $\R^d$.
Let $\cT$ be the collection of all bijective mappings between the multiset $\{\mx_{i}\}_{i=1}^{n}$ and the augmented grid $\cG^{d}_{n_0,n_R,\nb{n_S}}$. The \emph{empirical center-outward distribution function} is defined as
\begin{equation}\label{eq:assignment}
\fF_{\pm}^{(n)}:=\argmin_{T\in\cT}\sum_{i=1}^{n}\Big\lVert\mx_{i}-T(\mx_{i})\Big\rVert^2,
\end{equation}
\nb{the \emph{center-outward rank} of $\mx_i$ is defined 
as $(n_R+1)\lVert\fF^{(n)}_{\pm}(\mx_i)\rVert$,
and the \emph{center-outward sign} of $\mx_i$ is defined 
as $\fF^{(n)}_{\pm}(\mx_i)\rVert/\lVert\fF^{(n)}_{\pm}(\mx_i)\rVert$ 
if $\lVert\fF^{(n)}_{\pm}(\mx_i)\rVert\ne 0$, and $\bm0$ otherwise.} 
%The following concepts are associated with the empirical center-outward distribution function $\fF_{\pm}^{(n)}$ 
%\begin{itemize}[label=--]
%\item center-outward ranks $R_{\pm,i}^{(n)}:=(n_R+1)\lVert\fF_{\pm}^{(n)}(\fZ_{i}^{(n)})\rVert$,
%\item center-outward signs $\fS_{\pm,i}^{(n)}:=\mathbf{0}$ if $\fF_{\pm}^{(n)}(\fZ_{i}^{(n)})=\mathbf{0}$, and $\fF_{\pm}^{(n)}(\fZ_{i}^{(n)})/\lVert\fF_{\pm}^{(n)}(\fZ_{i}^{(n)})\rVert$ otherwise,
%\item center-outward (empirical) quantile regions $\bC_{\pm,\fZ^{(n)}}^{(n)}(\frac{j}{n_R}):=\{\fZ_{i}^{(n)}|R_{\pm,i}^{(n)}\le\frac{j}{n_R+1}\}$,
%\item center-outward (empirical) quantile contours $\cC_{\pm,\fZ^{(n)}}^{(n)}(\frac{j}{n_R}):=\{\fZ_{i}^{(n)}|R_{\pm,i}^{(n)}=\frac{j}{n_R+1}\}$.
%\end{itemize}
\end{definition}

The following two propositions from \citet{hallin2017distribution}
give the Glivenko--Cantelli strong consistency and
distribution-freeness of the empirical center-outward distribution 
function.  Both shall play key roles for the limiting null distribution 
and  asymptotic consistency of the test statistic that will be 
proposed in Section~\ref{sec:tests}.

\begin{proposition}[Glivenko--Cantelli, Proposition~5.1 in \citealp{hallin2017distribution}, Theorem~3.1 in \citealp{del2018smooth}]\label{prop:GC}
Let $\P\in\cP_d$ with center-outward distribution function $\fF_{\pm}$, 
and let $\mX_1,\dots,\mX_n$ be i.i.d. with distribution $\P$ with empirical center-outward distribution function $\fF_{\pm}^{(n)}$. 
Then 
\begin{equation}
\max_{1\le i\le n}\Big\lVert\fF_{\pm}^{(n)}(\mX_{i})-\fF_{\pm}(\mX_{i})\Big\rVert\stackrel{\sf a.s.}{\longrightarrow}0
%~~~\text{and}~~~
%\max_{1\le i\le n}\Big\lVert\fG_{\pm}^{(n)}(\mX_{i})-\fG_{\pm}(\mX_{i})\Big\rVert\stackrel{\sf a.s.}{\longrightarrow}0
\end{equation}
when $n\to\infty$ and \eqref{eq:key} holds.
\end{proposition}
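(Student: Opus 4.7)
The plan is to recast $\fF_{\pm}^{(n)}$ as the $L^2$-optimal transport map between the empirical source $P_n:=n^{-1}\sum_{i=1}^n\delta_{\mX_i}$ and the uniform discrete target $U_d^{(n)}$ on the augmented grid $\cG^{d}_{n_0,n_R,n_S}$, and then exploit the stability of optimal couplings under weak convergence of their marginals together with the uniqueness asserted in Proposition~\ref{prop:Figalli}. The proof unfolds in three stages.

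First, I would verify weak convergence on both sides: almost surely, the Glivenko--Cantelli (Varadarajan) theorem yields $P_n\to P$ weakly, while Proposition~\ref{prop:unif} yields $U_d^{(n)}\to U_d$ weakly. Since $U_d^{(n)}$ is supported in the bounded set $\overline{\bS}_d$, this weak convergence automatically lifts to $L^2$-Wasserstein convergence on the target side. On the source side, $L^2$-Wasserstein convergence is obtained by a truncation argument restricting to balls $\{\lVert\mx\rVert\le D\}$ and letting $D\to\infty$, using that $P\in\cP_d$ to control the tail contribution.

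Second, by construction of the assignment in \eqref{eq:assignment}, the empirical coupling $\pi_n:=n^{-1}\sum_{i=1}^n\delta_{(\mX_i,\fF_{\pm}^{(n)}(\mX_i))}$ is supported on a cyclically monotone subset of $\R^d\times\overline{\bS}_d$. Tightness of both marginal sequences gives tightness of $\{\pi_n\}$, and any weak cluster point $\pi_\infty$ has marginals $P$ and $U_d$ and is still supported on a cyclically monotone set. The Main Theorem of \citet{MR1369395} combined with Definition~\ref{def:centerdistr} then guarantees that the law of $(\mX,\fF_{\pm}(\mX))$ is the unique such coupling, so along the whole sequence $\pi_n$ converges weakly to $\mathrm{Law}(\mX,\fF_{\pm}(\mX))$, almost surely.

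Third, I would upgrade this weak convergence of couplings to the uniform convergence claimed at the sample points. By Proposition~\ref{prop:Figalli}(ii), $\fF_{\pm}$ is continuous on $\R^d\setminus\fF_{\pm}^{-1}(\bm0)$, hence uniformly continuous on any compact set that excludes a neighborhood of the compact, Lebesgue-negligible singular set $\fF_{\pm}^{-1}(\bm0)$. Given $\varepsilon>0$, choose compact $K\subset\R^d$ with $P(K^c)<\varepsilon$ and excluding such a neighborhood; on $K$, uniform convergence of the convex Brenier potentials $\varphi_n\to\varphi$, a consequence of cyclical monotonicity together with the stability of $c$-concave potentials, yields uniform convergence of their gradients $\fF_{\pm}^{(n)}\to\fF_{\pm}$. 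A Glivenko--Cantelli bound on the empirical mass in $K^c$ then handles the remaining sample points, since the images land in the bounded set $\overline{\bS}_d$ regardless. The main obstacle is controlling sample points whose center-outward image lies close to $\bm0$, where $\fF_{\pm}$ may fail to be continuous in dimensions $d\ge 3$; this is precisely the delicate analytic point treated in \citet[Proposition~5.1]{hallin2017distribution} and \citet[Theorem~3.1]{del2018smooth}, where Aleksandrov-type regularity arguments for convex potentials are combined with the cyclical monotonicity structure to bound the contribution of the singular set and guarantee uniform convergence in $i$ rather than merely in $P$-measure.
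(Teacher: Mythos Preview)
The paper does not supply its own proof of this proposition: it is quoted verbatim from \citet[Proposition~5.1]{hallin2017distribution} and \citet[Theorem~3.1]{del2018smooth}, and no argument appears in the appendix. So there is no in-paper proof to compare against; your proposal is effectively a sketch of what those external references do.

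On the substance of your sketch: the first two stages (weak convergence of both marginals, cyclical monotonicity of $\pi_n$, uniqueness of the limiting coupling via McCann) are correct and are indeed the backbone of the cited proofs. Two points deserve correction, however. First, the $L^2$-Wasserstein convergence on the source side is both unnecessary and potentially false, since $P\in\cP_d$ carries no moment assumption; McCann's theorem and the stability of cyclically monotone supports require only weak convergence, so this detour should be dropped. Second, and more seriously, the claim that ``uniform convergence of the convex Brenier potentials $\varphi_n\to\varphi$ \ldots\ yields uniform convergence of their gradients'' is not valid as stated: uniform convergence of convex functions gives only convergence of subdifferentials in the graph sense, which translates to pointwise convergence of gradients at points of differentiability of the limit, not uniform convergence. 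The actual argument in the cited references does not pass through gradient convergence of potentials; rather, one argues by contradiction that if $\max_i\lVert\fF_{\pm}^{(n)}(\mX_i)-\fF_{\pm}(\mX_i)\rVert$ stayed bounded away from zero along a subsequence, then extracting a further subsequence of the maximizing pairs $(\mX_{i_n},\fF_{\pm}^{(n)}(\mX_{i_n}))$ would produce a limit point off the graph of $\fF_{\pm}$, contradicting the weak convergence $\pi_n\to(\mathrm{id},\fF_{\pm})_\sharp P$ together with the single-valuedness of the limiting transport guaranteed by Proposition~\ref{prop:Figalli}. You correctly flag the singular set $\fF_{\pm}^{-1}(\bm0)$ as the delicate point, but the resolution is via this compactness-plus-uniqueness argument, not via regularity of approximating potentials.
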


%% Lastly, we state the distribution-freeness property of Hallin's rank. %But before that, let's first recall the definition of permutation of a multiset.

\begin{proposition}[Distribution-freeness, Proposition~6.1(ii) in \citealp{hallin2017distribution}, \nb{Proposition~2.5(ii) in \citealp{hallin2020distribution}}]\label{prop:distrfree}
Let $\mX_1,\dots,\mX_n$ be i.i.d. with distribution $P\in\cP_d$. 
Let $\fF_{\pm}^{(n)}$ be their empirical center-outward distribution
function.
% for $\{\mX_i\}_{i=1}^{n}$. 
%Recall that $n$ is factorized into $n=n_Rn_S+n_0$. 
Then for any decomposition $n_0,n_R,\nb{n_S}$ of $n$, the random
vector $[\fF_{\pm}^{(n)}(\mX_1),\dots,\fF_{\pm}^{(n)}(\mX_n)]$ is
uniformly distributed over $\rP(\cG^{d}_{n_0,n_R,\nb{n_S}})$.  The
latter set is comprised of all permutations of the multiset
$\cG^{d}_{n_0,n_R,\nb{n_S}}$; recall the notation introduced at the
end of Section~\ref{sec:introduction}.
%all the $n!/n_0!$ permutations (with identical objects) of 
%the augmented grid described in Definition~\ref{def:empdistr}, assuming the $n_0$ copies of the origin $\bm{0}$ are indistinguishable.
%the augmented grid consisting of $n_0$ copies of the origin $\bm{0}$ whenever $n_0>0$ and points $\{\mt_{j_R,j_1,\ldots,j_{d-1}}; j_R\in\zahl{n_R},j_1\in\zahl{n_1},\ldots,j_{d-1}\in\zahl{n_{d-1}}\}$ with spherical coordinates $(r_{j_R},\varphi_{1,j_1},\dots,\varphi_{d-1,j_{d-1}})^\top$.
%the augmented grid $\cG^{d}_{n_0,n_R,\bm{n_S}}$ defined in Definition~\ref{def:aug}.
\end{proposition}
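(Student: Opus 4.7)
The plan is to combine the permutation equivariance of the optimal assignment defining $\fF_{\pm}^{(n)}$ with the exchangeability of the i.i.d.\ sample.

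First, I would show that the argmin in~\eqref{eq:assignment} is almost surely a singleton. The family $\cT$ is finite, so it suffices to verify that for any two distinct $T_1,T_2\in\cT$, the event $\{\sum_{i=1}^{n}\lVert\mX_i - T_1(\mX_i)\rVert^2 = \sum_{i=1}^{n}\lVert\mX_i - T_2(\mX_i)\rVert^2\}$ has $P^{\otimes n}$-probability zero. This event is described by a nontrivial quadratic polynomial identity in the coordinates of $(\mX_1,\dots,\mX_n)$ and therefore defines a proper algebraic subvariety of $(\R^d)^n$ of Lebesgue measure zero; absolute continuity of $P\in\cP_d$ then gives probability zero, and a union bound over the finitely many pairs in $\cT$ yields the claim. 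Some care is needed when $n_0>0$, because the grid multiset contains $n_0$ copies of the origin: bijections in $\cT$ must be identified modulo relabelings among the origin-positions, otherwise they induce the same map on the data multiset and the distinctness hypothesis fails trivially.

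Second, I would establish equivariance. The optimization~\eqref{eq:assignment} depends on the data only through the multiset $\{\mX_i\}_{i=1}^n$ and is carried out over bijections defined on that multiset. Hence, on the a.s.\ event of uniqueness, reordering the data as $[\mX_{\pi(1)},\dots,\mX_{\pi(n)}]$ for any permutation $\pi$ of $\zahl{n}$ does not alter the image of any individual observation under the empirical transport map. Setting $\mathbf{V}=[\fF_{\pm}^{(n)}(\mX_1),\dots,\fF_{\pm}^{(n)}(\mX_n)]$, the output vector computed from the permuted sample is therefore $[V_{\pi(1)},\dots,V_{\pi(n)}]$.

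Finally, since the $\mX_i$ are i.i.d., $[\mX_{\pi(1)},\dots,\mX_{\pi(n)}]\stackrel{\sf d}{=}[\mX_1,\dots,\mX_n]$, so the previous step gives $[V_{\pi(1)},\dots,V_{\pi(n)}]\stackrel{\sf d}{=}\mathbf{V}$ for every permutation $\pi$. By Definition~\ref{def:empdistr}, $\mathbf{V}$ is almost surely a permutation of $\cG^{d}_{n_0,n_R,n_S}$; since any two permutations of the same multiset are related by the coordinate action of some $\pi\in S_n$, the exchangeability of $\mathbf{V}$ forces all atoms in $\rP(\cG^{d}_{n_0,n_R,n_S})$ to carry equal mass, which is exactly the uniform distribution claimed. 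The only real obstacle is the uniqueness step together with the multiset bookkeeping described in the first paragraph; once this is framed correctly, the exchangeability argument that closes the proof is routine.
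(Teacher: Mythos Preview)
Your argument is correct and follows the standard route to this result. Note, however, that the paper does not supply its own proof of Proposition~\ref{prop:distrfree}: it is quoted directly from \citet{hallin2017distribution} and \citet{hallin2020distribution}, so there is no in-paper proof to compare against. Your sketch is essentially the argument those references use.

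Two minor remarks on the details. First, in your uniqueness step the difference of costs
\[
\sum_{i=1}^{n}\lVert\mX_i - T_1(\mX_i)\rVert^2 - \sum_{i=1}^{n}\lVert\mX_i - T_2(\mX_i)\rVert^2
= 2\sum_{i=1}^{n}\langle \mX_i,\, T_2(\mX_i)-T_1(\mX_i)\rangle
\]
is actually an \emph{affine} (in fact linear) function of $(\mX_1,\dots,\mX_n)$, not a quadratic one, because the quadratic terms $\sum_i\lVert\mX_i\rVert^2$ cancel and the constant terms $\sum_i\lVert T_j(\mX_i)\rVert^2$ agree since $T_1,T_2$ are bijections onto the same multiset. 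This only strengthens your measure-zero claim. Second, your remark about identifying bijections modulo relabelings of the $n_0$ origin copies is exactly the right bookkeeping: two permutations $\sigma_1,\sigma_2$ of the grid enumeration give the same map on the data precisely when $g_{\sigma_1(i)}=g_{\sigma_2(i)}$ for all $i$, which happens only through such relabelings; outside that equivalence class the linear functional above is nontrivial and the argument goes through.
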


%The construction of smooth cyclically monotone interpolations for empirical center-outward distribution functions can be found in \citet{del2018smooth}. The detail is omitted here while we illustrate non-smooth cyclically monotone interpolations in the paper to see their idea. 

%\begin{figure}[!htbp]
%\centering
%\includegraphics[scale=0.5,trim={1cm 1cm 0 1cm},clip]{./fig/plotseed2}
%\caption{A non-smooth cyclically monotone interpolation of empirical center-outward distribution function.}\label{fig:nonsmooth}
%\end{figure}

\section{A distribution-free test of independence}\label{sec:tests}

This section introduces the proposed distribution-free test of $H_0$ in~(\ref{eq:H0}) built on \nb{center-outward ranks and signs}. 
%We now turn to tests of the joint independence hypothesis . 
The main new methodological idea is simple: We propose to plug the
calculated \nb{center-outward ranks and signs}, instead of the original data,
into the consistent test statistics presented in the introduction
(Section~\ref{sec:introduction}).  The distribution theory
for the proposed test statistic, however, is non-trivial and requires
new technical developments, which shall be detailed in Section~\ref{sec:theory}.

To illustrate our idea, we will focus on one particular consistent
test statistic in the sequel, namely, the distance covariance of
\citet{MR2382665}. Other choices including HSIC and more recent
proposals like the ball covariance proposed in \citet{MR4078465}
shall be discussed in Section~\ref{sec:theory} following the
presentation of our general combinatorial
non-CLT.% and a separate discussion section.

%One popular choice  is distance covariance since this measure is consistent against all dependent alternatives, i.e, is zero if and only if $H_0$ holds, under the existence of finite first moments of both $\lVert\mX\rVert$ and $\lVert\mY\rVert$ \citep[Theorem 3.11]{MR3127883}.

We begin with details on the distance covariance that are necessary to
convey the main idea. We first introduce a representation of the
associated measure of dependence.

\begin{definition}[Distance covariance measure of dependence, \citet{MR2382665}]\label{def:dcov}
Let $\mX\in\R^p$ and $\mY\in\R^q$ be two random vectors with
$\E(\lVert\mX\rVert+\lVert\mY\rVert)<\infty$, and let $(\mX',\mY')$ be an independent copy of $(\mX,\mY)$.
% and let $d_{\cX},d_{\cY}$ denote the $L_2$ metrics in spaces $\R^p$ and $\R^q$, respectively. 
The distance covariance of $(\mX,\mY)$  is defined as 
\begin{equation}\label{eq:dcov}
\dCov^2(\mX,\mY):=\E(d_{\mX}(\mX,\mX')d_{\mY}(\mY,\mY')),
\end{equation}
which is finite and uses the kernel function 
% $d_{\mX}(\cdot,\cdot)$ defined as 
\begin{align}\label{eq:fproj}
d_{\mX}(\mx,\mx')=d_{P_{\mX}}(\mx,\mx') :=
\lVert\mx-\mx'\rVert - E\lVert\mx-\mX_2\rVert 
- E\lVert\mX_1-\mx'\rVert
+ E\lVert\mX_1-\mX_2\rVert, 
\end{align}
%\begin{align*}
%d_{\mu}(\mx,\mx')&:=d(\mx,\mx')-a_{\mu}(\mx)-a_{\mu}(\mx')+D(\mu),\\
%a_{\mu}(\mx)&:=\int d(\mx,\mx') d\mu(\mx')~~~\text{and}~~~D(\mu):=\int d(\mx,\mx') d\mu(\mx)d\mu(\mx'),
%\end{align*}
and its analogue $d_{\mY}(\my,\my')$. Here $\mX_1$ and $\mX_2$ are independent and both follow the distribution $P_{\mX}$.
\end{definition}

The finiteness of $\dCov^2(\mX,\mY)$ in \eqref{eq:dcov} was proved by \citet[Proposition~2.3]{MR3127883}.
It can be shown that under the same conditions as in Definition~\ref{def:dcov}, 
\[\dCov^2(\mX,\mY)=\frac14\E(s(\mX_{1},\mX_{2},\mX_{3},\mX_{4})s(\mY_{1},\mY_{2},\mY_{3},\mY_{4})),\]
where $(\mX_{1},\mY_{1}),\dots,(\mX_{4},\mY_{4})$ are independent copies of $(\mX,\mY)$ and
\[
s(\mt_{1},\mt_{2},\mt_{3},\mt_{4}):=\lVert\mt_{1}-\mt_{2}\rVert+\lVert \mt_{3}-\mt_{4}\rVert-\lVert \mt_{1}-\mt_{3}\rVert-\lVert \mt_{2}-\mt_{4}\rVert;
%\yestag\label{eq:function-s}
\]
see also \citet[Sec.~3.4]{MR3178526}.
Accordingly, we have an unbiased estimator of the distance covariance between $\mX$ and $\mY$ as follows.
\begin{definition}[Sample distance covariance, \citet{MR3053543}]\label{def:sdcov}
Let $(\mX_1,\mY_1),\dots,(\mX_n,\mY_n)$ be independent copies of $(\mX,\mY)$ with $\mX\in\R^p$, $\mY\in\R^q$, $\E(\lVert\mX\rVert+\lVert\mY\rVert)<\infty$. The sample distance covariance is defined as
\begin{equation}\label{eq:defnew}
\dCov^2_n\Big([\mX_{i}]_{i=1}^{n},[\mY_{i}]_{i=1}^{n}\Big)=\mbinom{n}{4}^{-1}
\sum_{1\le i_1<\dots<i_4\le n}K\Big((\mX_{i_1},\mY_{i_1}),\dots,(\mX_{i_4},\mY_{i_4})\Big),
\end{equation}
where
\begin{align}\label{eq:kernel}
K\Big((\mx_{1},\my_{1}),\dots,(\mx_{4},\my_{4})\Big)
&:=\frac{1}{4\cdot4!}\sum_{[i_1,\dots,i_4]\in\rP(\zahl{4})} s(\mx_{i_1},\mx_{i_2},\mx_{i_3},\mx_{i_4})
s(\my_{i_1},\my_{i_2},\my_{i_3},\my_{i_4}),\\
\text{and recall}~~~s(\mt_{1},\mt_{2},\mt_{3},\mt_{4})&:=\lVert\mt_{1}-\mt_{2}\rVert+\lVert \mt_{3}-\mt_{4}\rVert-\lVert \mt_{1}-\mt_{3}\rVert-\lVert \mt_{2}-\mt_{4}\rVert.\notag
\end{align}
\end{definition}

{
The following is a direct consequence of Lemma~1 in \citet{MR3798874}.

\begin{proposition}\label{prop:equiv}
Definition~1 in \citet{MR3053543}, Equation~(3.2) in \citet{MR3269983}, 
Definition~5.3 (U-statistic) in \citet{jakobsen2017distance}, 
and Definition \ref{def:sdcov} above are equivalent. 
\end{proposition}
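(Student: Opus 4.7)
The plan is to show that Definition~\ref{def:sdcov} coincides with Huo and Sz\'ekely's estimator (\citealp{MR3053543}, Definition~1), at which point Lemma~1 of the same paper delivers the other two equivalences as free corollaries, because that lemma is precisely the identity showing that the U-statistic built from the symmetrized kernel of \citet{MR3178526} coincides with the bias-corrected double-centered statistic used in \citet{MR3269983} and the U-statistic form employed in \citet{jakobsen2017distance}.

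First, I would simplify the kernel $K$ in~\eqref{eq:kernel}. Because $s(\mt_{i_1},\mt_{i_2},\mt_{i_3},\mt_{i_4})$ is invariant under the transpositions $(i_1\leftrightarrow i_2)$, $(i_3\leftrightarrow i_4)$ and the swap $(i_1i_2)\leftrightarrow (i_3i_4)$, the stabilizer of $s$ inside $\rP(\zahl{4})$ has order $8$, so the sum over $\rP(\zahl{4})$ collapses to a sum over the $3$ matchings of $\{1,2,3,4\}$ into two unordered pairs. The same reduction applies to $s(\my_{i_1},\ldots,\my_{i_4})$, so $K$ is in fact an average of products $s(\cdots)s(\cdots)$ indexed by pairs of such matchings. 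This is the combinatorial form used by Huo and Sz\'ekely.

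Next, I would invoke Lemma~1 of \citet{MR3053543}. That lemma provides the computational identity rewriting an average of $s$-products over quadruples of indices as a linear combination of row sums, column sums, and the grand sum of the $n\times n$ pairwise distance matrix, which is exactly the double-centering expression appearing both in Sz\'ekely and Rizzo's Equation~(3.2) (\citealp{MR3269983}) and in Jakobsen's U-statistic definition (\citealp{jakobsen2017distance}, Definition~5.3). Substituting the simplified $K$ from the previous step into \eqref{eq:defnew} and applying this identity yields on the one hand Huo and Sz\'ekely's Definition~1, and on the other the row/column-centered expressions from the remaining two references. A short bookkeeping check that the combinatorial prefactors $1/(4\cdot 4!)$ in \eqref{eq:kernel} and $\binom{n}{4}^{-1}$ in \eqref{eq:defnew} match the normalizations in the three reference formulations completes the argument.

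The only step requiring any real care is the bookkeeping of constants and the reconciliation of the different symmetrization conventions across the four definitions; once Lemma~1 of \citet{MR3053543} is on the table there is no analytic content left, so I would expect this to be the main (and essentially the only) obstacle.
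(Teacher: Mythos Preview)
Your overall strategy---reduce everything to algebraic bookkeeping, lean on an existing lemma for the heavy lifting, and check constants---is exactly the spirit of the paper's proof. However, the citations and one of the characterizations are off in ways that would derail the argument as written.

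First, the attributions are tangled. The reference \citet{MR3053543} is Sz\'ekely and Rizzo (2013), not Huo and Sz\'ekely; the latter is \citet{MR3556612}. More importantly, the lemma the paper actually invokes for the U-statistic/double-centering identity is Lemma~1 of \citet{MR3798874}, not Lemma~1 of \citet{MR3053543}. The paper splits the work: it uses Lemma~3.1 of \citet{MR3556612} plus direct verification of the identities \eqref{eq:huoeq1}--\eqref{eq:huoeq5} to show \eqref{eq:def2013}~$\Leftrightarrow$~\eqref{eq:def2014}, and then Lemma~1 of \citet{MR3798874} (followed by two kernel expansions through intermediate forms $K'$ and $K''$) to show \eqref{eq:def2014}~$\Leftrightarrow$~\eqref{eq:defnew}. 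Your plan to get (1)~$\Leftrightarrow$~(2) ``for free'' from a single lemma does not match what is available in the literature; that equivalence requires its own computation.

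Second, and this is the more substantive gap: Jakobsen's Definition~5.3 is \emph{not} a double-centering expression but an order-\emph{six} U-statistic with kernel
\[
K^*\big((\mx_{1},\my_{1}),\dots,(\mx_{6},\my_{6})\big)
=\frac{1}{6!}\sum_{[i_1,\dots,i_6]\in\rP(\zahl{6})} s(\mx_{i_1},\mx_{i_2},\mx_{i_3},\mx_{i_4})\,s(\my_{i_1},\my_{i_2},\my_{i_5},\my_{i_6}),
\]
so it cannot fall out of a lemma about fourth-order kernels. The paper handles \eqref{eq:def2017}~$\Leftrightarrow$~\eqref{eq:defnew} by expanding the sextuple sum directly and collapsing it back to the quadruple-kernel form. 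Your plan needs a separate step for this.
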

}

%However, tests based on sample distance covariance have some disadvantages. 
%First, although showed that the consistency holds if both $\lVert\mX\rVert$ and $\lVert\mY\rVert$ have only finite first moments, 
%First, moment assumptions seem inappropriate for the independence testing problem. Secondly, these tests involve permutation testing procedures, which are computationally costly. 
%Alternatively, we consider applying distance covariance to Hallin's multivariate distribution function.

We are now ready to describe our distribution-free test of
independence, which combines distance covariance with 
\nb{center-outward ranks and signs}.

\begin{definition}[The proposed distribution-free test statistic]\label{def:feasible}
Let $(\mX_1,\mY_1),\dots,(\mX_n,\mY_n)$ be independent copies of $(\mX,\mY)$ 
with $P_{\mX}\in\cP_p$ and $P_{\mY}\in\cP_q$. 
Let $\fF_{\mX,\pm}^{(n)}$ and $\fF_{\mY,\pm}^{(n)}$ be the empirical center-outward distribution functions for $\{\mX_i\}_{i=1}^{n}$ and $\{\mY_i\}_{i=1}^{n}$.
We define the test statistic
\begin{equation}\label{eq:feasible}
\widehat M_n:=n\cdot \dCov^2_n\Big([\fF_{\mX,\pm}^{(n)}(\mX_i)]_{i=1}^{n},[\fF_{\mY,\pm}^{(n)}(\mY_i)]_{i=1}^{n}\Big).
\end{equation}
\end{definition}

By Proposition~\ref{prop:distrfree}, the statistic $\widehat M_n$ is
distribution-free under the independence hypothesis $H_0$ in
\eqref{eq:H0}.  Hence, an exact critical value for rejection of $H_0$
can be approximated via Monte Carlo simulation.  Numerically less
demanding, one could instead adopt the critical value 
{based on the limiting null distribution of $\widehat M_n$ derived from the following theorem. }
%that comes from
%the asymptotic form of the infeasible test statistic $\widetilde M_n$
%shown in Corollary \ref{coro:jakobsen}, i.e.,  $H_0$ is rejected if
%\[
%\widehat M_n>Q_{1-\alpha},
%\]
%where $Q_{1-\alpha}$ is defined as in \eqref{eq:testinfea}. 

{
\begin{theorem}[Limiting null distribution]\label{thm:null}
Let $(\mX_1,\mY_1),\dots,(\mX_n,\mY_n)$ be independent copies of $(\mX,\mY)$ 
with $P_{\mX}\in\cP_p$ and $P_{\mY}\in\cP_q$, 
and $\mX$ and $\mY$ are independent.
Then we have
\begin{equation}
\widehat M_n \stackrel{\sf d}{\longrightarrow} \sum_{k=1}^{\infty}\lambda_k(\xi_k^2-1),
\end{equation}
as $n\to\infty$ and \eqref{eq:key} holds, 
where $\lambda_k$, $k\in\Z_+$, are the non-zero eigenvalues of the integral equation
\begin{equation}
\E\Big(d_{\mU}(\bmu,\mU)d_{\mV}(\mv,\mV) \phi(\mU,\mV)\Big)=\lambda \phi(\bm{u},\mv),
\end{equation}
in which $d_{\mU}(\bmu,\bmu')$ and $d_{\mV}(\mv,\mv')$ are defined as in \eqref{eq:fproj}, $\mU \sim U_p$ and $\mV \sim U_q$ are independent, and $[\xi_k]_{k=1}^{\infty}$ is a sequence of independent standard Gaussian random variables. 
\end{theorem}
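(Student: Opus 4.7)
The plan is to reduce the problem to a double-permutation statistic via distribution-freeness, then apply the combinatorial non-CLT advertised in the introduction and identify the limiting eigenvalues via grid-to-measure convergence. Under $H_0$, the samples $\{\mX_i\}$ and $\{\mY_i\}$ are independent, so Proposition~\ref{prop:distrfree} yields that the random vectors $[\fF_{\mX,\pm}^{(n)}(\mX_i)]_{i=1}^n$ and $[\fF_{\mY,\pm}^{(n)}(\mY_i)]_{i=1}^n$ are independent and each uniformly distributed over all permutations of its augmented grid. Fixing any enumerations $\mU_1,\dots,\mU_n$ and $\mV_1,\dots,\mV_n$ of the two grids, and letting $\sigma,\pi$ be independent uniform random permutations of $\zahl n$, it thus suffices to study
\begin{equation*}
\widehat M_n \stackrel{\sf d}{=} n\cdot \dCov^2_n\bigl([\mU_{\sigma(i)}]_{i=1}^n,[\mV_{\pi(i)}]_{i=1}^n\bigr).
\end{equation*}

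Next, I would expand the right-hand side via Definition~\ref{def:sdcov} and carry out a Hoeffding-type decomposition of the kernel $K$ in \eqref{eq:kernel} with respect to the exchangeable product measure of the two permutations. Because the target law $U_p\otimes U_q$ is a product, the one- and two-argument marginal projections of $K$ cancel after centering in exactly the same way as in the classical degeneracy computation for distance covariance under independence, leaving the leading contribution equivalent to a quadratic form in the bilinear kernel $d_{\mU}(\mU_i,\mU_j)\,d_{\mV}(\mV_i,\mV_j)$ evaluated at the random index pairs. At this stage I invoke the combinatorial non-CLT for double-indexed permutation statistics, which provides that a degenerate quadratic permutation statistic of this form has a weak limit of the shape $\sum_k \lambda_k(\xi_k^2-1)$ with $\{\xi_k\}$ i.i.d.\ standard Gaussian and $\{\lambda_k\}$ equal to the eigenvalues of a limiting integral operator on $L^2(U_p\otimes U_q)$.

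Finally, to identify these $\lambda_k$ with those in the statement, I would combine Proposition~\ref{prop:unif} with a spectral-perturbation argument: since the uniform discrete law on each augmented grid converges weakly to $U_p$ (resp.\ $U_q$), the discrete integral operator with matrix entries $n^{-2} d_{\mU}(\mU_i,\mU_j)d_{\mV}(\mV_i,\mV_j)$ converges, e.g.\ in Hilbert--Schmidt norm, to the integral operator $\phi\mapsto \E\bigl(d_{\mU}(\bm u,\mU)\,d_{\mV}(\mv,\mV)\,\phi(\mU,\mV)\bigr)$, yielding eigenvalue convergence to the $\lambda_k$ in the theorem. The main obstacle is the combinatorial non-CLT itself: classical combinatorial CLTs of Hoeffding or Chatterjee treat linear statistics of a single permutation, whereas here a non-central limit is required for a degenerate quartic statistic of two independent permutations simultaneously. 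I would attack this via the method of moments on the Hoeffding decomposition, arguing that only the two-argument degenerate component survives the $n\cdot\binom{n}{4}^{-1}$ normalization and that its joint moments asymptotically match those of the target weighted chi-square series.
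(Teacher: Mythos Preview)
Your high-level architecture matches the paper exactly: reduce via Proposition~\ref{prop:distrfree} to an order-$4$ permutation statistic in two independent uniform permutations, perform a Hoeffding decomposition of $K$ with respect to the product of the \emph{grid} measures $P_{\mU^{(n)}}\times P_{\mV^{(n)}}$, show that only the second-order piece $d_{\mU^{(n)}}(\cdot,\cdot)\,d_{\mV^{(n)}}(\cdot,\cdot)$ survives the $n$-scaling, and then identify the limit via grid-to-measure convergence. The paper packages exactly these steps as Theorem~\ref{cor:general} (reduction from order $m$ to order $2$ by bounding the variance of each $\widetilde h_\ell$, $\ell\ge 3$, in the permutation setting) and Theorem~\ref{thm:general} (the non-CLT for the second-order piece).

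Where you diverge is in the technique for the non-CLT itself. You propose method of moments; the paper instead expands $g_i^{(n)}$ spectrally via Mercer, truncates to $K$ eigenvalues, and applies Bolthausen's multivariate Berry--Esseen bound for \emph{single}-indexed permutation statistics to the vector $\bigl(n^{-1/2}\sum_j e^{(n)}_{1,k_1}(\mz^{(n)}_{1;j})\,e^{(n)}_{2,k_2}(\mz^{(n)}_{2;\pi_j})\bigr)_{k_1,k_2\le K}$, controlling the truncation remainder in $L^2$. This sidesteps the combinatorics of higher moments of a double-indexed permutation statistic, which your route would have to confront directly. Your eigenvalue-identification step is also not quite right as stated: the discrete and limiting operators act on different spaces, so Hilbert--Schmidt norm convergence is not the natural framework; the paper instead uses Anselone's collectively-compact operator theory (Lemma~\ref{lem:aux1}) to get $\lambda_{i,k}^{(n)}\to\lambda_{i,k}$ together with uniform eigenfunction bounds $\sup_n\lVert e^{(n)}_{i,k}\rVert_\infty<\infty$, both of which feed into the Berry--Esseen and remainder estimates.
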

}

{
\begin{remark}
In Section~\ref{sec:theory} we will prove Theorem \ref{thm:null} rigorously. 
Intuitively, it is helpful to first consider the following ``oracle'' test
statistic $\widetilde M_n$:
\[
\widetilde M_n:=n\cdot \dCov^2_n\Big([\fF_{\mX,\pm}(\mX_i)]_{i=1}^{n},[\fF_{\mY,\pm}(\mY_i)]_{i=1}^{n}\Big),
\]
where $\fF_{\mX,\pm}$ and $\fF_{\mY,\pm}$ denote 
the center-outward distribution functions of $\P_{\mX}$ and $\P_{\mY}$, respectively. 
The infeasibility stems from the use of the (population) center-outward
distribution functions. 
One can easily verify using the asymptotic  theory of degenerate U-statistics that under the null
\[
\widetilde M_n \stackrel{\sf d}{\longrightarrow} \sum_{k=1}^{\infty}\lambda_k(\xi_k^2-1),
\]
where $[\lambda_k]_{k=1}^{\infty}$ and $[\xi_k]_{k=1}^{\infty}$ are defined as in Theorem \ref{thm:null}. 
%It is reasonable to guess that $\widehat M_n$ and $\widetilde M_n$ are asymptotically equivalent under the null in view of Proposition~\ref{prop:GC}. 
Somewhat surprising to us, the limiting null distribution of
$\widehat M_n$ is exactly the same as that of $\widetilde M_n$. 
\end{remark}
}

Therefore, for any pre-specified significance level $\alpha\in(0,1)$, our proposed test is hence
\begin{equation}\label{eq:test}
\mathsf{T}_{\alpha}:= \ind\Big(\widehat M_n>Q_{1-\alpha}\Big),~~~Q_{1-\alpha}:=\inf\Big\{x\in\R:\P\Big(\sum_{k=1}^{\infty}\lambda_k(\xi_k^2-1)\le x\Big)\ge 1-\alpha \Big\}.
\end{equation}
%For any pre-specified significance level $\alpha\in(0,1)$, our proposed test is hence 
%\begin{equation}\label{eq:test}
%\mathsf{T}_{\alpha}:= \ind\Big(\widehat M_n>Q_{1-\alpha}\Big).
%\end{equation}
Consequently, by Theorem \ref{thm:null}, 
\begin{equation}\label{eq:asymp-null}
\Pr(\mathsf{T}_{\alpha}=1\given H_0)=\alpha+o(1).
\end{equation}

It should be highlighted that\nb{, thanks to distribution-freeness,
given any fixed dimensions $p$ and $q$, 
the asymptotically small term in \eqref{eq:asymp-null}
is independent of the underlying distributions, 
and converges to zero uniformly over all the underlying distributions with $P_{\mX}\in\cP_p$, $P_{\mY}\in\cP_q$, and $\mX$ independent of $\mY$. 
T}he values of $\lambda_k$'s, 
and hence also the critical value $Q_{1-\alpha}$ itself, are
\nb{distribution-free} and only depend on the dimensions $p$ and
$q$.  
The critical value may thus be calculated using numerical methods for
each pair of $p$ and $q$.  Details will be described in Section \ref{subsec:eigv}. Table~\nb{C.1} in the supplement further records the critical values at significance levels $\alpha=0.1,0.05,0.01$ 
for $(p,q)=(1,1),(1,2),\ldots,(10,10)$ with accuracy $5\cdot 10^{-3}$.

Due to (i) the near-homeomorphism property of the center-outward distribution function shown in
Proposition \ref{prop:Figalli}; (ii) the strong Glivenko-Cantelli
consistency of \nb{empirical center-outward distribution functions} 
shown in Proposition \ref{prop:GC}; and
(iii) the fact that the distance covariance measure of dependence is zero if and only if $H_0$ holds under finiteness of marginal first
% Lyons (2013) p.3297 the condition there that X and Y have finite second moments is thus seen to be superfluous
 moments \citep[Theorem 3.11]{MR3127883}, it holds that $\widehat M_n$
 is asymptotically consistent \nb{and the corresponding test $\mathsf{T}_{\alpha}$ is consistent}.  This fact is summarized in the following theorem.

\begin{theorem}[\nb{Consistency}]\label{thm:consi}
  Let $(\mX_1,\mY_1),\dots,(\mX_n,\mY_n)$ be independent copies of $(\mX,\mY)$, 
  where $P_{\mX}\in\cP_p$ with center-outward distribution function $\fF_{\mX,\pm}$, 
  and $P_{\mY}\in\cP_q$ with center-outward distribution function $\fF_{\mY,\pm}$.
%Let $\fF_{\mX,\pm}^{(n)}(\cdot)$ and $\fF_{\mY,\pm}^{(n)}(\cdot)$ denote the empirical center-outward distribution functions for $(\mX_i)_{i=1}^{n}$ and $(\mY_i)_{i=1}^{n}$, respectively.
\nb{We then have, as long as $n\to\infty$ and \eqref{eq:key} holds,
\begin{equation}\label{eq:strconsi}
%\dCov^2_n\Big([\fF_{\mX,\pm}^{(n)}(\mX_i)]_{i=1}^{n},[\fF_{\mY,\pm}^{(n)}(\mY_i)]_{i=1}^{n}\Big)
\widehat M_n/n\;
\stackrel{\sf a.s.}{\longrightarrow}\;\dCov^2\Big(\fF_{\mX,\pm}(\mX),\fF_{\mY,\pm}(\mY)\Big),
\end{equation}
where $\dCov^2(\fF_{\mX,\pm}(\mX),\fF_{\mY,\pm}(\mY))\ge 0$ with equality if and only if $\mX$ and $\mY$ are independent.
%and equality holds if and only if $\mX$ and $\mY$ are independent.
%Moreover, if and only if $\mX$ and $\mY$ are independent, it holds that
%\begin{equation}
%\limsup_{n\to\infty}\E \Big[\dCov^2_n\Big([\fF_{\mX,\pm}^{(n)}(\mX_i)]_{i=1}^{n},[\fF_{\mY,\pm}^{(n)}(\mY_i)]_{i=1}^{n}\Big)\Big] = 0.
%\end{equation}
In addition, under any fixed alternative $H_1$, we obtain $\widehat M_n\stackrel{\sf a.s.}{\longrightarrow}\infty$ if $n\to\infty$ and \eqref{eq:key} holds, and thus}
\begin{equation}\label{eq:asymp-alter}
\nb{\Pr(\mathsf{T}_{\alpha}=1\given H_1)=1-o(1).}
\end{equation}
%This shows the consistency of the proposed test against any fixed alternative.}
\end{theorem}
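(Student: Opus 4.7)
The plan is to first prove the almost sure limit of $\widehat M_n/n$ by coupling it with an oracle version based on the true (population) center-outward distribution functions, and then derive both the characterization of the limit and the power statement against alternatives using the near-homeomorphism property of Proposition~\ref{prop:Figalli}.

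Introduce the oracle quantity $\widetilde M_n/n := \dCov^2_n([\fF_{\mX,\pm}(\mX_i)]_{i=1}^n, [\fF_{\mY,\pm}(\mY_i)]_{i=1}^n)$. Two steps together will deliver \eqref{eq:strconsi}. First, I would bound $|\widehat M_n/n - \widetilde M_n/n|$. The kernel $K$ in \eqref{eq:kernel} is built from sums and products of Euclidean norms of differences of its arguments, and all of $\fF_{\mX,\pm}^{(n)}(\mX_i)$, $\fF_{\mX,\pm}(\mX_i)$, $\fF_{\mY,\pm}^{(n)}(\mY_i)$, $\fF_{\mY,\pm}(\mY_i)$ lie in the bounded balls $\overline{\bS_p}$ or $\overline{\bS_q}$. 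Consequently $K$ is uniformly Lipschitz over the relevant product domain, and combining this with the $U$-statistic structure of \eqref{eq:defnew} and the triangle inequality gives
\[
\bigl|\widehat M_n/n - \widetilde M_n/n\bigr| \leq C\Bigl(\max_{1\le i\le n}\bigl\|\fF_{\mX,\pm}^{(n)}(\mX_i) - \fF_{\mX,\pm}(\mX_i)\bigr\| + \max_{1\le i\le n}\bigl\|\fF_{\mY,\pm}^{(n)}(\mY_i) - \fF_{\mY,\pm}(\mY_i)\bigr\|\Bigr),
\]
which vanishes almost surely by Proposition~\ref{prop:GC}. Second, since $(\fF_{\mX,\pm}(\mX_i), \fF_{\mY,\pm}(\mY_i))$ are i.i.d. bounded random vectors with values in $\overline{\bS_p} \times \overline{\bS_q}$, the uniformly bounded kernel $K$ makes Hoeffding's strong law of large numbers for $U$-statistics immediately applicable, yielding $\widetilde M_n/n \stackrel{\sf a.s.}{\longrightarrow} \dCov^2(\fF_{\mX,\pm}(\mX), \fF_{\mY,\pm}(\mY))$.

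For the equivalence with independence, Theorem~3.11 of \citet{MR3127883} yields $\dCov^2(\fF_{\mX,\pm}(\mX), \fF_{\mY,\pm}(\mY)) \geq 0$ with equality if and only if $\fF_{\mX,\pm}(\mX)$ and $\fF_{\mY,\pm}(\mY)$ are independent; the required finite first moments are automatic since both transforms are bounded. One direction is immediate, as measurable functions of independent vectors are independent. For the converse, Proposition~\ref{prop:Figalli}(ii) furnishes inverses $\fF_{\mX,\pm}^{-1}$ and $\fF_{\mY,\pm}^{-1}$ which are homeomorphisms off sets of Lebesgue measure zero; these sets carry zero $P_{\mX}$- and $P_{\mY}$-mass by absolute continuity, so $\mX$ is almost surely $\sigma(\fF_{\mX,\pm}(\mX))$-measurable and likewise for $\mY$, transferring independence back to $(\mX,\mY)$. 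Under any fixed alternative $H_1$ the limit in \eqref{eq:strconsi} is strictly positive, so $\widehat M_n \stackrel{\sf a.s.}{\longrightarrow} \infty$ and eventually exceeds the finite threshold $Q_{1-\alpha}$, giving \eqref{eq:asymp-alter}.

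The main obstacle is the Lipschitz bound in the displayed inequality: verifying that no geometric blow-up occurs when perturbing eight vector arguments of $K$ rests crucially on the uniform confinement of center-outward images to the bounded balls $\overline{\bS_p}$ and $\overline{\bS_q}$---a property without which Proposition~\ref{prop:GC} alone would not suffice. A secondary subtlety is the converse direction of the iff, which would fail without the inversion afforded by the homeomorphism part of Proposition~\ref{prop:Figalli}.
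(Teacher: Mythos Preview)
Your proposal is correct and follows essentially the same approach as the paper: couple $\widehat M_n/n$ with the oracle $\widetilde M_n/n$ via a Lipschitz-type bound on $K$ together with Proposition~\ref{prop:GC}, apply the strong law for $U$-statistics to the oracle, and handle the iff via the distance-covariance characterization plus the near-homeomorphism in Proposition~\ref{prop:Figalli}(ii). The paper's version differs only in cosmetic details---it computes the explicit constant $16$ in your displayed bound, cites \citet{jakobsen2017distance} for the $U$-statistic SLLN, and carries out the converse independence direction by an explicit Borel-set calculation rather than your measurability phrasing---but the skeleton is identical.
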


%We remark that combining Theorem \ref{thm:null} and Theorem \ref{thm:consi} shows the consistency of the proposed test against any fixed alternative. 

{
We conclude this section with one more remark that discusses an interesting connection between the proposed test and a famous dependence measure, Blum--Kiefer--Rosenblatt's $R$ dependence measure  \citep{MR0125690}, when $p=q=1$. 

\begin{remark}\label{remark:new2}
In the univariate case ($p = q = 1$), the statistic $\widehat M_n/n$ is actually (up to a constant) a consistent estimator of Blum--Kiefer--Rosenblatt's $R$ measure of dependence  \citep{MR0125690}. In detail, Theorem~\ref{thm:consi} has shown that $\widehat M_n/n\stackrel{\sf a.s.}{\longrightarrow}
\dCov^2(\fF_{\mX,\pm}(\mX),\fF_{\mY,\pm}(\mY))$. 
When $X$ and $Y$ are both absolutely continuous, \citet[Lemma~10]{bergsma2006new} 
showed that
\[\frac14\dCov^2(X,Y)=\int \{F_{(X,Y)}(x,y)-F_{X}(x)F_{Y}(y)\}^2 dxdy,\]
where $F_{Z}(\cdot)$ denotes the cumulative distribution function of $Z$. This implies that
\[\frac1{16}\dCov^2(\fF_{X,\pm}(X),\fF_{Y,\pm}(Y))
=\int \{F_{(X,Y)}(x,y)-F_{X}(x)F_{Y}(y)\}^2 d F_{X}(x)d F_{Y}(y).\]
The right-hand side is Blum--Kiefer--Rosenblatt's $R$ and $\widehat M_n/(16n)$ converges to it almost surely. 
\end{remark}
}

\section{Theoretical analysis}\label{sec:theory}

This section provides the theoretical justification for the test
in~(\ref{eq:test}).  By Proposition~\ref{prop:distrfree}, both $[\fF_{\mX,\pm}^{(n)}(\mX_i)]_{i=1}^{n}$ and $[\fF_{\mY,\pm}^{(n)}(\mY_i)]_{i=1}^{n}$ are generated from uniform permutation measures. In view of Definition~\ref{def:feasible}, it is hence clear that under $H_0$ the test statistic $\widehat M_n$ is a summation over the product space of two uniform permutation measures, which belongs to the family of permutation statistics.

The study of permutation statistics %is at the center of probability
% theory, and
can be traced back at least to \citet{MR0011424}, who proved an
asymptotic normality result for single-indexed permutation statistics of the form $\sum_{i=1}^{n}x^{(n)}_{i}y^{(n)}_{\pi_{i}}$. Here $\mx^{(n)}$ and $\my^{(n)}$ are vectors that are possibly varying with $n$, and $\mpi$ is uniformly distributed on $\rP(\zahl{n})$. Later, \citet{MR0031670}, \citet{MR0044058}, \citet{MR0089560}, and \citet{MR130707}, among many others, generalized \citeauthor{MR0011424}'s results in different ways, and \citet{MR751577} gave a sharp Berry--Esseen bound for such permutation statistics using Stein's method.

Double-indexed permutation statistics, of the form $\sum_{i\ne j}A^{(n)}_{ij}B^{(n)}_{\pi_{i}\pi_{j}}$ with $\fA^{(n)}$ and $\fB^{(n)}$ as matrices possibly varying with $n$, are more difficult to tackle. They were first investigated by \citet{MR0010941},
who gave sufficient conditions for asymptotic normality.
% to hold for such double-indexed ones.
Later, various weakened conditions were introduced in, e.g., \citet[Chap.~4.1]{MR0175265}, \citet{MR230426}, \citet{MR236965}, \citet[Chap.~2.4]{Cliff1973Sa}, \citet{MR532242}, \citet{MR857081}, \citet{MR1032590}, and the Berry--Esseen bound was established in \citet{MR1474091}, \citet{MR2205339}, and \citet{MR2573554}.
%for the double-indexed permutation statistics of general form 
%which implies more general conditions such that the asymptotic normality holds. 

Despite this vast literature, there is a notable absence of results on
permutation statistics which, as its degenerate U-statistics
``cousins'', may weakly converge to a non-normal
distribution. %As a matter of fact, to our knowledge very few results exist for combinatorial non-CLTs \fbox{literature?}.
% The reason for such an absence could, of course, be well justified by
% a lack of motivation. However, quite interestingly and also clear from
% reading the context before,
Our analysis of $\widehat M_n$, however, hinges on such a
combinatorial non-CLT.  In the following, we present two general
theorems that fill the gap.

Before stating the two theorems, we introduce some notions needed.
For each $i=1,2$, let $\mZ_i$ be a random vector taking values in
$\Omega_i$, a compact subset of $\R^{p_i}$.  We consider triangular
arrays $\{\mz^{(n)}_{i;j},n\in\Z_+,j\in\zahl{n}\}$, for $i=1,2$, such
that the random variables with uniform discrete distributions on the respective
multisets $\{\mz^{(n)}_{i;j},j\in\zahl{n}\}$, denoted by
$\mZ^{(n)}_i$, weakly converge to $\mZ_i$ as $n\to\infty$.  We further
introduce an independent copy of $\mZ_i$, denoted $\mZ'_{i}$, and
independent copies of the $\mZ^{(n)}_{i}$, denoted ${\mZ^{(n)}_{i}}'$.
Finally, for $i=1,2$ and $n\in\Z_+$, let
$g^{(n)}_{i},g_{i}: \Omega_i\times \Omega_i\to\R$ be real-valued
functions, the former of which may change with $n$.

Our first theorem is then focused on double-indexed
permutation-statistics of the form
\begin{equation}
\widehat D^{(n)}=\mbinom{n}{2}^{-1}\sum_{1\le j_1< j_2\le n}
  g^{(n)}_1\Big(\mz^{(n)}_{1;j_1},\mz^{(n)}_{1;j_2}\Big)
  g^{(n)}_2\Big(\mz^{(n)}_{2;\pi_{j_1}},\mz^{(n)}_{2;\pi_{j_2}}\Big),
\end{equation}
where $\mpi$ is uniformly distributed on $\rP(\zahl{n})$.

%In contrast, our results are the first ones for non-normal limiting distributions.

\begin{theorem}\label{thm:general} 
  Assume that for each $i=1,2$, the functions
  $g^{(n)}_i,~n\in\Z_+$, and $g_i$ satisfy the following conditions:
\begin{enumerate}[label=(\roman*),itemsep=-.5ex]
\item\label{asm:sym2} each $g^{(n)}_{i}$ is symmetric, i.e., $g^{(n)}_{i}(\mz,\mz')=g^{(n)}_{i}(\mz',\mz)$ for all $\mz,\mz'\in\Omega_i$;
\item\label{asm:cnt2} the family $g^{(n)}_{i}$, $n\in\Z_+$, is equicontinuous; %which means for every $\epsilon>0$ there exists $\delta>0$ such that $\lVert(\mz-\mz'',\mz'-\mz''')\rVert<\delta$ implies $\lvert g^{(n)}_{i}(\mz,\mz')-g^{(n)}_{i}(\mz'',\mz''')\rvert<\epsilon$ for all $n\in\Z_+$;
\item\label{asm:nnd2} each $g^{(n)}_{i}$ is non-negative definite, i.e.,
\[\sum_{j_1,j_2=1}^{\ell}c_{j_1}c_{j_2}g^{(n)}_{i}(\mz_{j_1},\mz_{j_2})\ge 0\]
for all $c_1,\dots,c_{\ell}\in\R$, $\mz_1,\dots,\mz_{\ell}\in\Omega_i$, ${\ell}\in\Z_+$; 
\item\label{asm:dgn2}  each $g^{(n)}_{i}$ has $\E(g^{(n)}_{i}(\mz,\mZ^{(n)}_{i}))=0$; 
\item\label{asm:fnt2} each $g^{(n)}_{i}$ has $\E(g^{(n)}_{i}(\mZ^{(n)}_{i},{\mZ^{(n)}_{i}}')^2)\in(0,+\infty)$; %, recalling that ${\mZ^{(n)}_{i}}'$ is an independent copy of $\mZ_i$; 
\item\label{asm:unf} as $n\to\infty$, the functions $g^{(n)}_{i}$
  converge uniformly on $\Omega_i$ to $g_i$, with
  $\E(g_i(\mZ_{i},\mZ'_{i})^2)\in(0,+\infty)$.
  %, where $\mZ'_{i}$ is an independent copy of $\mZ_{i}$.
\end{enumerate}
 It then holds that
 \[n\widehat D^{(n)}\stackrel{\sf d}{\longrightarrow}
   \sum_{k_1,k_2=1}^{\infty}\lambda_{1,k_1}\lambda_{2,k_2}(\xi_{k_1,k_2}^2-1)\]
 as $n\to\infty$, where $\xi_{k_1,k_2},~k_1,k_2\in\Z_+$, are
 i.i.d.~standard Gaussian, and the $\lambda_{i,k}\geq 0,~k\in\Z_+$,
 are eigenvalues of the Hilbert-Schmidt integral operator given by
 $g_i$, i.e., for each $i$ the $\lambda_{i,k}$'s solve the integral
 equations
\[
\E(g_i(\mz_{i},\mZ_{i})e_{i,k}(\mZ_{i}))=\lambda_{i,k}e_{i,k}(\mz_{i})
\]
for a system of orthonormal eigenfunctions $e_{i,k}$.
\end{theorem}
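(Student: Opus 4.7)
The plan is to diagonalize each kernel $g_i^{(n)}$ via the spectral decomposition of its Gram matrix on the grid, rewriting $n\widehat D^{(n)}$ as a double sum of squared single-indexed linear permutation statistics, and then to combine a multivariate combinatorial CLT of Hoeffding--H\'ajek type with a Hilbert--Schmidt tail-control argument. Concretely, for $i=1,2$, the Gram matrix $G_i^{(n)} := [g_i^{(n)}(\mz_{i;j_1}^{(n)}, \mz_{i;j_2}^{(n)})]_{j_1,j_2=1}^{n}$ is symmetric by (i), positive semi-definite by (iii), and satisfies $G_i^{(n)}\mathbf{1}=\bm 0$ by (iv). Write its spectral decomposition as $G_i^{(n)} = \sum_{k\ge 1} n\mu_{i,k}^{(n)} u_{i,k}^{(n)} (u_{i,k}^{(n)})^\top$ with $\{u_{i,k}^{(n)}\}$ orthonormal in $\R^n$ and each $u_{i,k}^{(n)}\perp\mathbf{1}$. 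Substituting into $\widehat D^{(n)}$ and extracting the $j_1 = j_2$ diagonal yields
\begin{equation}\label{eq:prop-spec}
n\widehat D^{(n)} \;=\; \frac{n^2}{n-1}\sum_{k_1,k_2\ge 1} \mu^{(n)}_{1,k_1}\mu^{(n)}_{2,k_2}\Big[(Y^{(n)}_{k_1,k_2})^2 - R^{(n)}_{k_1,k_2}\Big],
\end{equation}
with $Y^{(n)}_{k_1,k_2}:=\sum_{j=1}^{n}(u_{1,k_1}^{(n)})_j(u_{2,k_2}^{(n)})_{\pi_j}$ a linear permutation statistic and $R^{(n)}_{k_1,k_2}:=\sum_{j=1}^{n}(u_{1,k_1}^{(n)})_j^2(u_{2,k_2}^{(n)})_{\pi_j}^2$ a diagonal correction.

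\textbf{Marginal convergence and joint Gaussianity.} Spectral stability for compact symmetric integral operators, applied to the empirical operators $\phi\mapsto\tfrac{1}{n}\sum_j g_i^{(n)}(\cdot,\mz_{i;j}^{(n)})\phi(\mz_{i;j}^{(n)})$, together with equicontinuity (ii), uniform convergence (vi), and weak convergence of $\mZ_i^{(n)}$ to $\mZ_i$, yields $\mu^{(n)}_{i,k}\to\lambda_{i,k}$ for every fixed $k$. Hoeffding's formulas for linear permutation statistics then give $\E Y^{(n)}_{k_1,k_2}=0$ (from $u_{i,k}^{(n)}\perp\mathbf{1}$) and $\mathrm{Cov}(Y^{(n)}_{k_1,k_2},Y^{(n)}_{k_1',k_2'})=\tfrac{1}{n-1}\delta_{k_1 k_1'}\delta_{k_2 k_2'}$ (from orthonormality of $\{u_{i,k}^{(n)}\}$). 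A multivariate permutation CLT of Hoeffding--H\'ajek type therefore delivers, for each fixed $K\in\Z_+$,
\[
\{\sqrt{n-1}\,Y^{(n)}_{k_1,k_2}\}_{k_1,k_2\le K}\;\stackrel{\sf d}{\longrightarrow}\;\{\xi_{k_1,k_2}\}_{k_1,k_2\le K}
\]
to i.i.d.\ standard Gaussians, the Lindeberg--Noether negligibility $\max_{j,k\le K}|(u_{i,k}^{(n)})_j|\to 0$ being secured by boundedness of the limiting eigenfunctions on the compact set $\Omega_i$ together with eigenvector stability. A parallel second-moment computation gives $nR^{(n)}_{k_1,k_2}\stackrel{P}{\longrightarrow}1$, so that $\tfrac{n^2}{n-1}[(Y^{(n)}_{k_1,k_2})^2-R^{(n)}_{k_1,k_2}]\stackrel{\sf d}{\longrightarrow}\xi_{k_1,k_2}^2-1$.

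\textbf{Truncation and the main obstacle.} By continuous mapping and Slutsky, for each fixed $K$ the partial sum in \eqref{eq:prop-spec} over $k_1,k_2\le K$ converges in distribution to $\sum_{k_1,k_2\le K}\lambda_{1,k_1}\lambda_{2,k_2}(\xi_{k_1,k_2}^2-1)$. The delicate step, and the main obstacle of the proof, is to let $K\to\infty$ while keeping the tail of \eqref{eq:prop-spec} uniformly negligible in $n$. This is resolved via the Hilbert--Schmidt identity
\[
\sum_{k\ge 1}(\mu_{i,k}^{(n)})^2 \;=\; \frac{1}{n^2}\sum_{j_1,j_2=1}^{n} g_i^{(n)}(\mz_{i;j_1}^{(n)},\mz_{i;j_2}^{(n)})^2,
\]
whose right-hand side converges by (ii), (vi), and weak convergence to $\E g_i(\mZ_i,\mZ_i')^2=\sum_k\lambda_{i,k}^2<\infty$. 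This spectral-mass convergence upgrades pointwise eigenvalue convergence to uniform tightness of the full sum in \eqref{eq:prop-spec}: a second-moment bound then shows that the tail contribution vanishes as $K\to\infty$ uniformly in $n$. Combined with the $L^2$-convergence of the truncated limit to the full limit (using $\sum_k\lambda_{i,k}^2<\infty$ and $\E(\xi^2-1)^2=2$), this closes the argument.
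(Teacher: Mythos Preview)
Your strategy is essentially the paper's: diagonalize $g_i^{(n)}$, rewrite $n\widehat D^{(n)}$ as a weighted sum of squared linear permutation statistics minus a diagonal correction, apply a multivariate permutation CLT to the truncated sum, and then let the truncation level $K\to\infty$ via a tail bound. The CLT step and the $R^{(n)}_{k_1,k_2}\to 1$ argument match the paper's Step~I. Your Lindeberg negligibility $\max_j|(u_{i,k}^{(n)})_j|\to 0$ is the right object, but be aware that ``boundedness of the limiting eigenfunctions together with eigenvector stability'' hides real work: the paper spends a separate lemma on this, using collectively compact operator approximation (Anselone) to get eigenvector convergence in $C(\Omega_i)$ and K\"onig's theorem to bound $\|e_{i,k}\|_\infty$ uniformly.

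The substantive divergence is in the tail control. You propose to use the Hilbert--Schmidt identity $\sum_k(\mu_{i,k}^{(n)})^2=n^{-2}\sum_{j_1,j_2}g_i^{(n)}(\mz_{i;j_1}^{(n)},\mz_{i;j_2}^{(n)})^2$ and then a ``second-moment bound'' on the tail. In the i.i.d.\ degenerate U-statistic setting this is exactly Serfling's argument, but in the permutation setting the squared-tail expectation does \emph{not} reduce automatically to $\sum_{\mk\notin[K]^2}(\gamma_\mk^{(n)})^2$: the terms $(Y_\mk^{(n)})^2-R_\mk^{(n)}$ across different $\mk$ are not a priori uncorrelated under the permutation measure, so you cannot simply sum individual variances. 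To make your route go through you must either (a) compute the cross-covariances $\Cov(S_\mk^{(n)},S_{\mk'}^{(n)})$ and show they vanish or are $O(1)$ uniformly, or (b) apply the double-indexed permutation variance formula (Barbour/Zhao--Bai--Chen) directly to the \emph{tail kernel} $g_1^{(n)}g_2^{(n)}-g_{1,K}^{(n)}g_{2,K}^{(n)}$, whose Hoeffding projections are expressible in terms of $\sum_{k>K}(\mu_{i,k}^{(n)})^2$. The paper sidesteps this entirely: it bounds $\Var(S_\mk^{(n)})$ uniformly in $\mk$, applies Minkowski's inequality to get $\E|n\widehat D^{(n)}-n\widehat D_K^{(n)}|^2\le C(\sum_{\mk\notin[K]^2}\gamma_\mk^{(n)})^2$, and controls this $\ell^1$ tail via the \emph{trace} identity $\sum_k\mu_{i,k}^{(n)}=n^{-1}\sum_j g_i^{(n)}(\mz_{i;j}^{(n)},\mz_{i;j}^{(n)})$ (Mercer), which converges by (ii), (vi), and weak convergence just as your HS sum does. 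Swapping your HS identity for this trace identity would close the gap with no further change to your outline.
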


Theorem \ref{thm:general} provides the essential component of our
analysis for $\widehat M_n$. However, $\widehat M_n$ is a permutation
statistic that is not double- but quadruple-indexed.  To cover this
case, we have to extend Theorem \ref{thm:general} to multiple-indexed
permutation statistics, the study of which is much more sparse (see,
for example, \citet{raic15multivariate} for some recent progresses).
Further notation is needed.

For all $j\in\Z_+$, let $\mw_j=(\mz_{1;j},\mz_{2;j})$ be a
vector with $\mz_{i;j}\in\Omega_i$, for $i=1,2$. 
Let $h: (\Omega_1\times \Omega_2)^m\to\R$
be a \emph{symmetric} kernel of order $m$, i.e., 
$h(\mw_1,\ldots,\mw_m)=h(\mw_{\sigma(1)},\ldots,\mw_{\sigma(m)})$
for all permutations $\sigma\in\rP(\zahl{m})$ and $\mw_1,\ldots,\mw_m\in\Omega_1\times \Omega_2$.
For any integer $\ell\in\zahl{m}$, and any measure $\Pr_{\mW}$, we let
\[
h_{\ell}(\mw_1\ldots,\mw_{\ell}; \Pr_{\mW}):=\E(h(\mw_1\ldots,\mw_{\ell},\mW_{\ell+1},\ldots,\mW_m)),
\]
where $\mW_1,\ldots,\mW_m$ are $m$ independent random vectors with
distribution $\Pr_{\mW}$.

The next theorem treats a multiple-indexed permutation-statistic of
order $m$ defined as
\begin{equation}\label{eq:mips}
\widehat \Pi^{(n)}=\mbinom{n}{m}^{-1}\sum_{1\le j_1 <\cdots< j_m\le n}
  h\Big(
  (\mz^{(n)}_{1;j_1},\mz^{(n)}_{2;\pi_{j_1}}),\ldots,
  (\mz^{(n)}_{1;j_m},\mz^{(n)}_{2;\pi_{j_m}})\Big),
\end{equation}
where $\mpi$ is uniformly distributed on $\rP(\zahl{n})$, and the
triangular arrays $\{\mz^{(n)}_{i;j},n\in\Z_+,j\in\zahl{n}\},~i=1,2$
are as introduced before the statement of Theorem~\ref{thm:general}.

\begin{theorem}\label{cor:general} 
%Recall the definitions of $\mZ^{(n)}_i,{\mZ^{(n)}_{i}}',\mZ_i,\mZ'_i$. 
Let $\mZ_i$ and $\mZ^{(n)}_i$, $i=1,2$, be defined as for Theorem~\ref{thm:general}. 
%Let $\mW$ denote random vector $(\mZ_{1},\mZ_{2})$ with $\mZ_{i}\in\Omega_i$, $i=1,2$.
%and $\mZ_1\indep \mZ_2$.
%where $\Omega_i$ is a compact subset of $\R^{p_i}$.
%Let the kenrel $h$ be \emph{symmetric}, i.e., 
%$h(\mw_1,\ldots,\mw_m)=h(\mw_{\sigma(1)},\ldots,\mw_{\sigma(m)})$, 
%for all permutations $\sigma\in\rP(\zahl{m})$ and $\mw_1,\ldots,\mw_m\in\Omega_1\times \Omega_2$.
Assume the kernel $h$  has the following three properties:
\begin{enumerate}[label=(\Roman*),itemsep=-.5ex]
\item\label{property:1} $h$ is continuous with $\lVert h\rVert_\infty <\infty$;
\item\label{property:2} $h_1\Big(\mw_1;\Pr_{\mZ^{(n)}_1}\times \Pr_{\mZ^{(n)}_2}\Big)=0$;
\item\label{property:3} one has
\begin{align*}
\mbinom{m}{2}\cdot h_2\Big(\mw_1,\mw_2;\Pr_{\mZ^{(n)}_1}\times \Pr_{\mZ^{(n)}_2}\Big)
&=g^{(n)}_1(\mz_{1;1},\mz_{1;2})g^{(n)}_2(\mz_{2;1},\mz_{2;2}),\\
\text{and}~~~
\mbinom{m}{2}\cdot h_2\Big(\mw_1,\mw_2;\Pr_{\mZ_1}\times \Pr_{\mZ_2}\Big)
&=g_1(\mz_{1;1},\mz_{1;2})g_2(\mz_{2;1},\mz_{2;2}),
\end{align*}
%where for each $i=1,2$, $g^{(n)}_{i},~n\in\Z_+$ are symmetric, equicontinuous, non-negative definite, $\E(g^{(n)}_i(\mz,\mZ^{(n)}_{i}))=0$ with $\E(g^{(n)}_i(\mZ^{(n)}_{i},{\mZ^{(n)}_{i}}')^2)\in(0,+\infty)$, and $g^{(n)}_{i}$ converges uniformly to $g_{i}$ which is continuous and satisfies
where for each $i=1,2$, $g^{(n)}_{i},~n\in\Z_+$, and $g_{i}$ 
satisfy Assumptions~\ref{asm:sym2}--\ref{asm:unf} from Theorem~\ref{thm:general}.
\end{enumerate}
We then have 
\[n\widehat \Pi^{(n)}\stackrel{\sf d}{\longrightarrow}
\sum_{k_1,k_2=1}^{\infty}\lambda_{1,k_1}\lambda_{2,k_2}(\xi_{k_1,k_2}^2-1)\]
as $n\to\infty$, where $\lambda_{i,k}$ and $\xi_{k_1,k_2}$ are defined as in Theorem~\ref{thm:general}.
\end{theorem}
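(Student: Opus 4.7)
The plan is to reduce Theorem~\ref{cor:general} to Theorem~\ref{thm:general} via a Hoeffding--ANOVA decomposition of $\widehat{\Pi}^{(n)}$ with respect to the product measure $P := \Pr_{\mZ^{(n)}_1} \times \Pr_{\mZ^{(n)}_2}$ appearing in property~\ref{property:2}. Setting
\[
\tilde{h}_k(\mw_1,\ldots,\mw_k) := \sum_{S \subseteq \zahl{k}} (-1)^{k-|S|}\, h_{|S|}(\mw_i : i \in S;\, P)
\]
and writing $\mw_i^{(\pi)} := (\mz_{1;i}^{(n)}, \mz_{2;\pi_i}^{(n)})$, the standard algebraic identity yields the pointwise expansion
\[
\widehat{\Pi}^{(n)} = \sum_{k=0}^m \binom{m}{k}\binom{n}{k}^{-1} \sum_{S \subseteq \zahl{n},\,|S|=k} \tilde{h}_k(\mw_i^{(\pi)} : i \in S),
\]
valid irrespective of the law of $\pi$. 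Property~\ref{property:2} gives $h_1 \equiv 0$, which forces $h_0 = \E[h_1(\mW_1)] = 0$, $\tilde{h}_1 \equiv 0$, and $\tilde{h}_2 = h_2$, so the $k=0,1$ contributions vanish.

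The crux is then to identify the $k=2$ term with $\widehat{D}^{(n)}$. By property~\ref{property:3},
\[
\binom{m}{2}\binom{n}{2}^{-1} \sum_{j_1<j_2} h_2(\mw_{j_1}^{(\pi)}, \mw_{j_2}^{(\pi)}) = \binom{n}{2}^{-1} \sum_{j_1<j_2} g^{(n)}_1(\mz_{1;j_1}^{(n)}, \mz_{1;j_2}^{(n)})\, g^{(n)}_2(\mz_{2;\pi_{j_1}}^{(n)}, \mz_{2;\pi_{j_2}}^{(n)}),
\]
which is exactly $\widehat{D}^{(n)}$. Because property~\ref{property:3} guarantees that the $g^{(n)}_i, g_i$ meet assumptions~\ref{asm:sym2}--\ref{asm:unf}, Theorem~\ref{thm:general} will yield $n\,\widehat{D}^{(n)} \stackrel{\sf d}{\longrightarrow} \sum_{k_1,k_2} \lambda_{1,k_1} \lambda_{2,k_2}(\xi_{k_1,k_2}^2 - 1)$. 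By Slutsky's theorem it then suffices to prove that the residual $R_n := \sum_{k=3}^m \binom{m}{k}\binom{n}{k}^{-1} \sum_{|S|=k} \tilde{h}_k(\mw_i^{(\pi)} : i \in S)$ satisfies $n R_n = o_P(1)$.

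This residual estimate is the main technical obstacle and the only place where the permutation (as opposed to i.i.d.) structure matters. I would perform a direct second-moment bound on each $U_n^{(k)} := \binom{n}{k}^{-1}\sum_{|S|=k} \tilde{h}_k(\mw_i^{(\pi)} : i \in S)$ for $k\ge 3$, expanding $\E[(nU_n^{(k)})^2]$ as a double sum over pairs $(S_1,S_2)$ of $k$-subsets. Property~\ref{property:1} supplies a uniform bound $\lVert\tilde{h}_k\rVert_\infty < \infty$. In an i.i.d. setting, complete degeneracy of $\tilde{h}_k$ under $P$ (every single-argument conditional mean vanishes) would make all cross-terms with $S_1\ne S_2$ vanish, leaving only the $\binom{n}{k}$ diagonal pairs and hence $\mathrm{Var}(nU_n^{(k)}) = O(n^{2-k})$. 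In the permutation setting, the second coordinates $(\mz_{2;\pi_i}^{(n)})$ are sampled without replacement rather than with replacement, and the resulting deviation from the i.i.d. second-moment must be controlled, e.g. via a Hoeffding-type comparison of sampling with and without replacement or a Hajek combinatorial projection. Once this is accomplished, $n R_n = o_P(1)$ for $k \geq 3$, completing the reduction via Slutsky's theorem.
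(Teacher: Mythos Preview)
Your strategy is exactly the paper's: Hoeffding decomposition of $\widehat\Pi^{(n)}$ with respect to $\Pr_{\mZ_1^{(n)}}\times\Pr_{\mZ_2^{(n)}}$, identification of the degree-$2$ term with $\widehat D^{(n)}$ via property~\ref{property:3}, invocation of Theorem~\ref{thm:general}, and an $L^2$ bound on the residual terms of order $\ell\ge 3$.

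The one place where your sketch diverges from the paper is the execution of the residual bound. The paper does \emph{not} use a Hoeffding with/without-replacement comparison or a H\'ajek projection; instead it carries out the second-moment computation directly. For $\ell=3$ it separates $\E[n\widetilde D^{(n)}_3]$ and $\Var(n\widetilde D^{(n)}_3)$, and for the variance introduces a further ``all-indices-centered'' version $\widetilde\Delta^{(n)}_3$ (subtracting averages over each of the six indices separately) so that every single-index marginal vanishes; this lets the cross-terms in the double sum over index sets collapse combinatorially to $O(n^3)$, giving $\Var(n\widetilde D^{(n)}_3)=O(n^{-1})$. The mean is handled by a similar but simpler expansion. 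A bare Hoeffding comparison would not obviously control the nonzero mean under the permutation measure, and the two-population structure (the $\mz_{1;\cdot}$ indices are fixed while the $\mz_{2;\cdot}$ indices are permuted) is not the standard single-population without-replacement setting, so if you pursue that route you should check those issues carefully; the direct combinatorial route is safer.
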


With the aid of Theorem \ref{cor:general}, we are now ready to {prove Theorem \ref{thm:null}, which presents} the limiting null distribution of $\widehat M_n$.  In our context,
$p_1=p$, $p_2=q$, $m=4$, and $h$ is the kernel $K$ defined in
\eqref{eq:kernel}.  The multisets $\{\mz^{(n)}_{1;j},j\in\zahl{n}\}$
and $\{\mz^{(n)}_{2;j},j\in\zahl{n}\}$ are taken to be
$\{\bmu^{(n)}_{j},j\in\zahl{n}\}:=\cG^{p}_{n_0,n_R,\nb{n_S}}$ and
$\{\mv^{(n)}_{j},j\in\zahl{n}\}:=\cG^{q}_{n_0,n_R,\nb{n_S}}$,
respectively.  Accordingly, $\mZ^{(n)}_{1}$ follows the uniform discrete 
distribution over $\cG^{p}_{n_0,n_R,\nb{n_S}}$, denoted by
$\mU^{(n)}$, and $\mZ^{(n)}_{2}$ has a uniform discrete distribution over
$\cG^{q}_{n_0,n_R,\nb{n_S}}$, denoted by $\mV^{(n)}$.  The functions
$g^{(n)}_1$, $g_1$, $g^{(n)}_2$, and $g_2$ %, as shown below,
can be chosen as $-d_{\mU^{(n)}}$, $-d_{\mU}$, $-d_{\mV^{(n)}}$, and
$-d_{\mV}$, defined in the manner of \eqref{eq:fproj}, respectively.

We now verify properties~\ref{property:1}--\ref{property:3}. 
Write $\mw=(\bmu,\mv)$ and $\mw'=(\bmu',\mv')$.
Notice that the kernel $K$ is symmetric and continuous on $\overline\bS_p\times\overline\bS_q$.
We have
\begin{align*}
K_{1}\Big(\mw;\P_{\mU^{(n)}}\times \P_{\mV^{(n)}}\Big)
= 0,
~~~~~~
&6K_{2}\Big(\mw,\mw';\P_{\mU^{(n)}}\times \P_{\mV^{(n)}}\Big)
= \Big(-d_{\mU^{(n)}}(\bmu,\bmu')\Big)\Big(-d_{\mV^{(n)}}(\mv,\mv')\Big),\\
%K_{1}\Big(\mw;\P_{\mU}\times \P_{\mV}\Big)
%= 0
~~~\text{and}~~~~~~
&6K_{2}\Big(\mw,\mw';\P_{\mU}\times \P_{\mV}\Big)
= \Big(-d_{\mU}(\bmu,\bmu')\Big)\Big(-d_{\mV}(\mv,\mv')\Big),
\end{align*}
by \citet[Sec.~1.1]{MR3798874supp}.
Moreover, the $-d_{\mU^{(n)}}(\bmu,\bmu')$ is symmetric, non-negative
definite \citep[p.~3291]{MR3127883}, and equicontinuous since 
%\begin{align*}
%\lvert d_{\mU^{(n)}_{i}}(\bmu_{i},\bmu'_{i})-d_{\mU^{(n)}_{i}}(\bmu_{i},\bmu''_{i})\rvert
%&\;=\Big\lvert \lVert\bmu_i-\bmu'_i\rVert-\lVert\bmu_i-\bmu''_i\rVert - 
%\E\Big[\lVert\mU^{(n)}_i-\bmu'_i\rVert-\lVert\mU^{(n)}_i-\bmu''_i\rVert\Big]\Big\rvert\\
%&\;\le 2\lVert\bmu'_i-\bmu''_i\rVert,
%\end{align*}
%and thus
\[\lvert -d_{\mU^{(n)}}(\bmu,\bmu')-(-d_{\mU^{(n)}}(\bmu''',\bmu''))\rvert\le 2\lVert\bmu-\bmu'''\rVert+2\lVert\bmu'-\bmu''\rVert.\]
%It remains to prove that $-d_{\mU^{(n)}_{i}}(\bmu_{i},\bmu'_{i})$ converges uniformly to $-d_{\mU_{i}}(\bmu_{i},\bmu'_{i})$. It suffices to show $\E \lVert \bmu_i -\mU^{(n)}_{i}\rVert$ converges uniformly to $\E \lVert \bmu_i -\mU_{i}\rVert$, which follows from  the facts that $\E \lVert \bmu_i -\mU^{(n)}_{i}\rVert$ converges to $\E \lVert \bmu_i -\mU_{i}\rVert$ pointwisely (by the Portmanteau Lemma) and that
%\[\Big\lvert\,\Big\lvert\E \lVert \bmu_i -\mU^{(n)}_{i}\rVert-\E \lVert \bmu_i -\mU_{i}\rVert\Big\rvert - \Big\lvert\E \lVert \bmu'_i -\mU^{(n)}_{i}\rVert-\E \lVert \bmu'_i -\mU_{i}\rVert\Big\rvert\,\Big\rvert\le 2\lVert\bmu_i-\bmu'_i\rVert.\] 
One can verify that $\E[-d_{\mU^{(n)}}(\bmu,\mU^{(n)})]=0$, and 
$-d_{\mU^{(n)}}(\bmu,\bmu')$ converges uniformly to $-d_{\mU}(\bmu,\bmu')$ by combining the pointwise convergence using the Portmanteau Lemma \citep[Lemma~2.2]{MR1652247} and the equicontinuity of $-d_{\mU^{(n)}}(\bmu,\bmu')$ \citep[Exercise~7.16]{MR0385023}.
The similar results hold for $-d_{\mV^{(n)}}(\mv,\mv')$ and $-d_{\mV}(\mv,\mv')$. 
Lastly, under $H_0$, $[\fF^{(n)}_{\mX,\pm}(\mX_i)]_{i=1}^{n}$ and $[\fF^{(n)}_{\mY,\pm}(\mY_i)]_{i=1}^{n}$ are independent with margins uniformly distributed on $\rP(\cG^{p}_{n_0,n_R,\nb{n_S}})$ and $\rP(\cG^{q}_{n_0,n_R,\nb{n_S}})$, respectively. Hence our statistic is distributed of the form \eqref{eq:mips}.

{In summary, Theorem \ref{cor:general} can be applied to the statistic
$\widehat M_n$ and we have accordingly proven Theorem \ref{thm:null} rigorously. }
%the following corollary summarizes the result from
%the above derivations.
Furthermore, although our focus is on the combination of 
\nb{center-outward ranks and signs} 
with the distance covariance statistic, the general form of our
combinatorial non-CLTs (Theorems \ref{thm:general} and
\ref{cor:general}) also yields the limiting null distributions for
test statistics based on plugging \nb{center-outward ranks and signs} into HSIC-type or
ball-covariance statistics
\citep{gretton:aistats2005,MR2255909,MR2249882,MR4078465}.
% can be
% similarly derived based on Theorems \ref{thm:general} and
% \ref{cor:general}.
We omit the details for these analogies.

\section{Computational aspects}\label{sec:computation}

In this section, we describe the practical implementation of our
test. To perform the proposed test, for any given $n$, we fix a
factorization such that
\[
\nb{
n = n_R n_S+n_0,
~~~~~~n_R,n_S\in\Z_+,~0\le n_0<\min\{n_R,n_S\},
~~~\text{with}~n_R,n_S\to\infty~~~\text{as}~n\to\infty.
}
\]
%where $n_R,n_S$ are chosen to be as close to each other as possible.

First, we need to compute $[\fF_{\mX,\pm}^{(n)}(\mX_i)]_{i=1}^{n}$ and
$[\fF_{\mY,\pm}^{(n)}(\mY_i)]_{i=1}^{n}$ as defined in
\eqref{eq:assignment}. This is an assignment problem and will be discussed in Section
\ref{subsec:assign}. After obtaining
$[\fF_{\mX,\pm}^{(n)}(\mX_i)]_{i=1}^{n}$ and
$[\fF_{\mY,\pm}^{(n)}(\mY_i)]_{i=1}^{n}$, the test statistic
$\widehat{M}_n$ in \eqref{eq:feasible} can be computed using Equation
(3.3) in \citet{MR3556612} in $O(n^2)$ time.  Second, we have to
calculate the critical value $Q_{1-\alpha}$ defined in
\eqref{eq:test}. This value can be estimated numerically, as detailed in Section \ref{subsec:eigv}. {We have also provided the critical values at significance levels $\alpha=0.1,0.05,0.01$ for $(p,q)=(1,1),(1,2),\ldots,(10,10)$ with accuracy $5\cdot 10^{-3}$ in Table~\nb{C.1} in the supplement. }

{As shall be shown soon, the total computation complexity of our
  proposed test is $O(n^{5/2}\log(n))$ in  various cases. To contrast,
  to implement the distance covariance based test for instance, one
  has a time complexity $O(Rn^2)$, with $R$ representing the number of
  permutations. For many choices of $R$, our test will have a clear computational advantage. }

\subsection{Assignment problems}\label{subsec:assign}
Problem \eqref{eq:assignment} amounts to a linear sum assignment
problem (LSAP), a fundamental problem in linear programming and
combinatorial optimization.
%We first describe LSAP in terms of matrix formulation informally: we are given a nonnegative $n\times n$ cost matrix $C=[c_{ij}]$, where the element in the $i$-th row and $j$-th column represents the cost of assigning the $i$-th row to the $j$-th column. The problem is to assign each row to a distinct column such that the total cost is minimized. An example of $n=4$ is given in Figure~\ref{fig:matrix}, where the first, second, third, fourth rows are assigned to the third,  first, fourth, second columns, respectively. 
%Alternatively, 
We define LSAP through graph theory. Consider a weighted (complete)
bipartite graph $(S,T;E)$ with $S:=\{\ms_i\}_{i=1}^{n}$,
$T:=\{\mt_j\}_{j=1}^n$, $\ms_i,\mt_j\in\R^d$, where in Problem
\eqref{eq:assignment}, $S=\{\mx_i\}_{i=1}^{n}$ and
$T=\cG^{d}_{n_0,n_R,\nb{n_S}}$.  The edge between $\ms_i$ and $\mt_j$, denoted by $(\ms_i,\mt_j)$, has a nonnegative weight $c_{ij}:=\lVert\ms_i-\mt_j\rVert^2$,~ $i,j\in\zahl{n}$. We want to find an \emph{optimal matching}, i.e., a subset of edges such that each vertex is an endpoint of exactly one edge in this subset with a minimum sum of weights of its edges; see Figure~\ref{fig:bipartite} for an illustration of $n=3$, where edges in the optimal matching are marked in red. 

\begin{figure}[t]
\centering
%\begin{subfigure}[t]{0.45\textwidth}
%\includegraphics{./fig/assignmentmatrix}
%\caption{Matrix formulation of LSAP}\label{fig:matrix}
%\end{subfigure}
%~~~~~~
%\begin{subfigure}[t]{0.45\textwidth}
\includegraphics[scale=0.7]{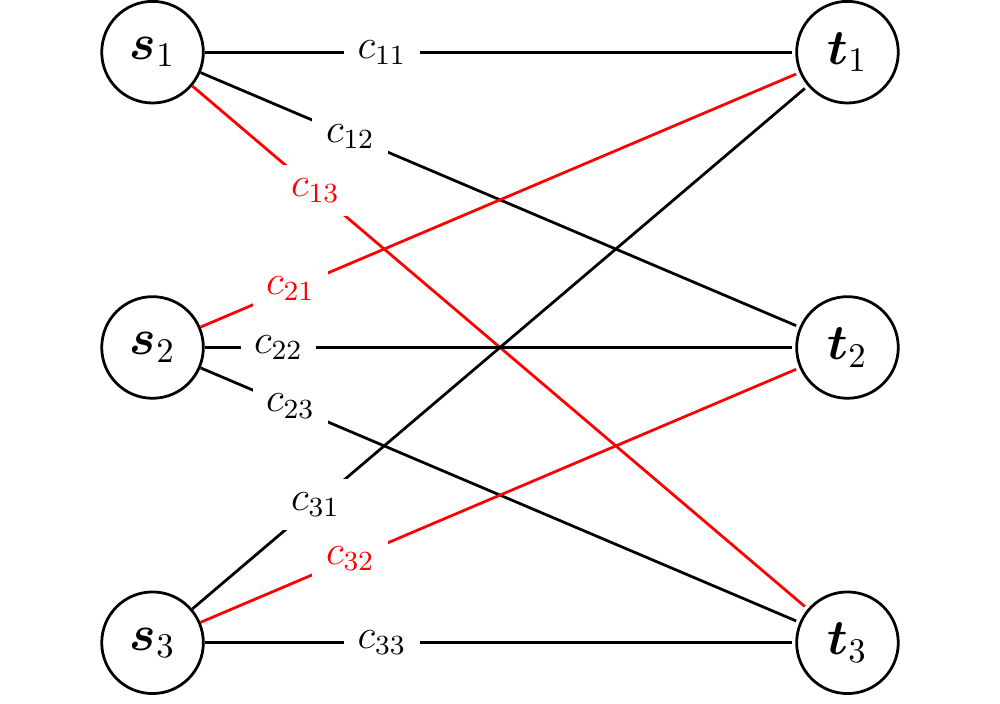}
\caption{Bipartite graph formulation of a linear sum assignment problem (LSAP)}\label{fig:bipartite}
%\end{subfigure}
%\caption{Two ways to formulate the linear sum assignment problem (LSAP).}
\end{figure}

%Introducing the dummy variables $x_{ij}$ defined as
%\[
%x_{ij}=\begin{cases} 1, & \text{if edge $(\ms_i,\mt_j)$ is in the optimal matching,}\\
%0, & \text{otherwise,}
%\end{cases}
%\]
%LSAP can be formulated as a linear program:
%\begin{align*}
%&\min_{x_{ij}}     &&\mkern-20mu\sum_{i,j}c_{ij}x_{ij}\\
%&\text{subject to} &&\mkern-20mu\sum_{j=1}^{n} x_{ij}=1,~\text{for}~i\in\zahl{n};
%~\sum_{i=1}^{n} x_{ij}=1,~\text{for}~j\in\zahl{n};
%~x_{ij}\in\{0,1\},~\text{for}~i,j\in\zahl{n}.
%\end{align*}
%Then an edge $(\ms_i,\mt_j)$ is in the optimal matching if and only if $x_{ij}=1$. The dual linear program is
%\begin{align*}
%&\max_{\alpha_i,\beta_j} &&\mkern-120mu\sum_{i}\alpha_{i}+\sum_{j}\beta_{j}\\
%&\text{subject to}       &&\mkern-120mu\alpha_{i}+\beta_{j}\le c_{ij},~\text{for}~i, j\in\zahl{n};
%~\alpha_{i},\beta_{j}~\text{unconstrained}.
%\end{align*}
%The sufficient and necessary condition for an optimal solution is
%%\[x_{ij}(c_{ij}-\alpha_{i}-\beta_{j})=0,~\text{for}~i,j\in\zahl{n}.\]
%\begin{align*}
%&\alpha_{i}+\beta_{j}\le c_{ij},  & &\mkern-150mu\text{for}~i,j\in\zahl{n},  \\
%&\alpha_{i}+\beta_{j}= c_{ij},    & &\mkern-150mu\text{for}~x_{ij}=1.
%\end{align*}
We introduce some terms to state the theorem below. A \emph{perfect matching} is a subset of edges such that each vertex is incident to exactly one edge. 
The \emph{total weight} of a perfect matching is the sum of weights of the edges in this matching.
A perfect matching is called \emph{$(1+\epsilon)$-approximate} for
$\epsilon>0$ if its total weight is no larger than $(1+\epsilon)$ times the total weight of the optimal matching.

\begin{theorem}[\citet{MR1015271}, \citet{MR3205219}, \citet{MR3238983}]\label{thm:matching}
Assume that points $\ms_i,\mt_j\in \R^d, ~i,j\in\zahl{n}$, have bounded integer coordinates, and
% the squared Euclidean distances between all pairs $(\ms_i,\mt_j)$
that the squared distances $\lVert \ms_i-\mt_j\rVert^2,
~i,j\in\zahl{n}$ are all bounded  by some integer $N$. Then there exists an algorithm to find the optimal matching in $O(n^{5/2}\log(nN))$ time. Furthermore,
\begin{itemize}
\item[(i)] if $d=2$, there exists an exact algorithm for computing the optimal matching in $O(n^{3/2+\delta}\log(N))$ time for any arbitrarily small constant $\delta>0$;
\item[(ii)] if $d\ge3$, there is an algorithm to compute a $(1+\epsilon)$-approximate perfect matching in \\$O(\epsilon^{-1}n^{3/2}\tau(n,\epsilon)\log^{4}(n/\epsilon)\log(\max c_{ij}/\min c_{ij}))$ time, 
where $\tau(n,\epsilon)$  depending on $n,\epsilon$ is small.
\end{itemize}
\end{theorem}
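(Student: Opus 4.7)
The plan is to invoke the three cited algorithmic results and sketch the common high-level strategy rather than to reprove them in full, since each statement is a faithful quote from a prior paper and the constructions lie outside our statistical development. All three bounds share a Gabow--Tarjan cost-scaling skeleton: the LSAP is reduced to $O(\log(nN))$ subproblems at successively finer integer cost scales, and within each subproblem one runs a Hopcroft--Karp-style procedure that performs $O(n^{1/2})$ blocking augmentation phases. This explains two of the three factors appearing in every bound; the remaining per-phase cost is where the three results diverge, and that is where one invokes geometry.

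For the general bound $O(n^{5/2}\log(nN))$ of \citet{MR1015271}, each phase is charged $O(n^2)$ work by a naive scan for admissible augmenting edges, which multiplied together gives $O(n^2)\cdot O(n^{1/2})\cdot O(\log(nN))$. For $d=2$ in case (i), \citet{MR3205219} exploit planarity: because the weights $c_{ij}$ are squared Euclidean distances in $\R^2$, the task of finding the next admissible edge reduces to an additively-weighted nearest-neighbor query, which can be supported in amortized $n^{\delta}$ time by dynamic Voronoi-type data structures; this cuts per-phase work to $O(n^{1+\delta})$ and yields the $O(n^{3/2+\delta}\log N)$ bound. For $d\ge 3$ in case (ii), exact nearest-neighbor queries in high dimension become prohibitive, and \citet{MR3238983} substitute approximate nearest-neighbor structures that answer queries in polylogarithmic time at the price of returning only a $(1+\epsilon)$-approximate optimum; the factor $\log(\max c_{ij}/\min c_{ij})$ replaces $\log(nN)$ in the cost-scaling wrapper and yields the stated complexity.

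The main obstacle to a self-contained proof would be the construction and amortized analysis of these exact and approximate geometric nearest-neighbor data structures, which comprise the bulk of the cited papers and are orthogonal to our statistical contribution. I would therefore state the theorem at the level of its three sources and direct the reader to them for the full algorithmic details; the only new content we need is the observation that the geometric inputs from our setting---the data points and the augmented grid $\cG^d_{n_0,n_R,n_S}$---can be encoded with bounded integer coordinates after an affine rescaling, so that the integer hypothesis of the cited theorem applies and the $\log(nN)$ and $\log(\max c_{ij}/\min c_{ij})$ factors remain $O(\log n)$ with high probability under our standing assumptions.
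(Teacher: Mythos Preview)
Your proposal is correct and takes essentially the same approach as the paper: explain the Gabow--Tarjan cost-scaling skeleton for the main $O(n^{5/2}\log(nN))$ bound and defer parts (i) and (ii) to the cited sources. The paper differs only in emphasis: it writes out the Gabow--Tarjan algorithm in considerably more detail---giving the LP/dual formulation, the notions of $1$-feasibility and admissible edges, and the explicit two-step phase (depth-first augmentation followed by Hungarian search with dual updates)---whereas you summarize this at a higher level but add some intuition (nearest-neighbor queries, Voronoi structures) for why the geometric speedups in (i) and (ii) work, which the paper omits entirely.
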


{In the supplement we will  describe the algorithm developed by
  \citet{MR1015271} under the basic settings. It is essentially the
  combination of the Hungarian method
  \citep{MR0075510,MR0091857,MR0093429} and the algorithm of \citet{MR0337699}.
We will ignore the details of the faster exact algorithm for $d=2$ by \citet{MR3205219} 
and the approximate algorithm for $d\ge3$ by \citet{MR3238983}; both
algorithms improve the Gabow--Tarjan algorithm by exploiting the geometric structure of the weight matrix. }

\subsection{Eigenvalues and quadratic forms in normal variables}\label{subsec:eigv}

In {Theorem \ref{thm:null}}, $\lambda_k,~k\in\Z_+$, are non-zero eigenvalues (counted with multiplicity) of the integral equation
\[
\E(d_{\mU}(\bmu,\mU)d_{\mV}(\mv,\mV) \phi(\mU,\mV))=\lambda \phi(\bm{u},\mv).
\]
Under the independence hypothesis $H_0$, the eigenvalues 
 $\lambda_k,~k\in\Z_+$, are given by all the products $\lambda_{1,j_1}\lambda_{2,j_2},~j_1,j_2\in\Z_+$, where $\lambda_{1,j},~j\in\Z_+$, and $\lambda_{2,j},~j\in\Z_+$, are the  non-zero eigenvalues of the integral equations
\[
\E(d_{\mU}(\bmu,\mU)\phi_1(\mU))=\lambda_{1}\phi_1(\bmu)
~~~\text{and}~~~
\E(d_{\mV}(\mv,\mV)\phi_2(\mV))=\lambda_{2}\phi_2(\mv),
\]
respectively \citep[Lemma 4.2]{MR3541972}. The  non-zero eigenvalues of integral equation $\E(d_{\mU}(\bmu,\mU)\phi_1(\mU))=\lambda_{1}\phi_1(\bmu)$ with $\mU\sim U_p$ are given by 
\[
-4/(\pi^2j^2),~~~{\rm for~all}~~j\in\Z_+~~ {\rm when}~~ p=1.
\] 
We are not aware of any closed form formulas for the eigenvalues when
$p\ge 2$.  However, in practice, the non-zero eigenvalues
$\{\lambda_{1,j}\}_{j=1}^{\infty}$ can be numerically estimated by the
non-zero eigenvalues of the matrix
\nb{
\[
(\fI_{M}-\fJ_{M}/M)\fD^{(M)}(\fI_{M}-\fJ_{M}/M)/M, 
\]
denoted by $\lambda_{1,j}^{(M)},~j\in\zahl{M-1}$, where $M:=M_R M_S$, 
$\fD^{(M)}=[D_{jj'}^{(M)}]$, $D_{jj'}^{(M)}=\lVert\bmu_j^{(M)}-\bmu_{j'}^{(M)}\rVert$ and 
$\bmu_j^{(M)},~j\in\zahl{M}$, are points in the grid $\cG^{p}_{0,M_R,M_S}$.
%for each $j\in\zahl{N}$, $\bmu_j^{(N)}$ is drawn independently from distribution $U_p$.
%e.g., exactly the $n$ points in the augmented grid $\cG^{p}_{n_0,n_R,\bm{n_S}}$ defined in Lemma~\ref{lem:unif}. 
Here %both $\lambda_{1,j},~j\in\Z_+$ and 
$\lambda_{1,j}^{(M)},~j\in\zahl{M-1}$ are all negative \citep[p.~3291]{MR3127883}. %and both sequences are sorted into ascending order. Then for any fixed $j\in\Z_+$, we have $\lambda_{1,j}^{(N)}\to\lambda_{1,j}$ as $n\to\infty$. 
For $p=1$, we take $\lambda_{1,j}^{(M)}=-4/(\pi^2j^2)$.
%with the convergence rate
%\[|\lambda_{1,j}^{(N)}-\lambda_{1,j}|\le C\max_{k:\lambda_{1,k}=\lambda_{1,j}}\Big\lVert \lambda_{1,j}\phi_{1,k}(\bmu)-\E d_{\mU^{(N)}}(\bmu,\mU^{(N)})\phi_{1,k}(\mU^{(N)})\Big\rVert_{\infty}\]
%for some absolute constant $C>0$ and all sufficiently large $n$, where $\phi_{1,k}(\bmu)$ is the eigenfunction associated with eigenvalue $\lambda_{1,k}$, 
%$\mU^{(n)}$ represents the uniform distribution over the augmented grid $\{\bmu_j^{(n)}\}_{j=1}^{n}$,
%and $d_{\mU^{(n)}}(\bmu,\bmu')$ is defined similarly as \eqref{eq:fproj}. 
We can obtain eigenvalues $\lambda_{2,j}^{(M)},~j\in\zahl{M-1}$ based on the grid $\cG^{q}_{0,M_R,M_S}$ similarly. Then we sort the positive products
$\lambda^{(M)}_{1,j_1}\lambda^{(M)}_{2,j_2},~j_1,j_2\in\zahl{M-1}$
into a descendingly ordered sequence $[\lambda^{(M)}_{k}]_{k=1}^{(M-1)^2}$,} and have the following theorem.

%\begin{theorem}
%Let $[\lambda_k]_{k=1}^{\infty}$ and $(\lambda^{(N)}_{k})_{k=1}^{N^2}$ be those defined in Proposition~\ref{prop:jakobsen} and defined above, respectively,
%and $\{\xi_k\}_{k=1}^{\infty}$ be independent standard Gaussian random variables.
%Then it holds that
%\[\sum_{k=1}^{N^2}\lambda_k^{(N)}(\xi_k^2-1) \stackrel{\sf d}{\longrightarrow} \sum_{k=1}^{\infty}\lambda_k(\xi_k^2-1).\]
%\end{theorem}

\begin{theorem}\label{thm:eigvcomp}
  Let $[\lambda_k]_{k=1}^{\infty}$ and
  $[\lambda^{(M)}_{k}]_{k=1}^{(M-1)^2}$ be eigenvalues as defined
  in {Theorem \ref{thm:null}} and above, respectively.  Let
  $[\xi_k]_{k=1}^{\infty}$ be a sequence of independent standard
  Gaussian random variables.  Then it holds for any pre-specified
  significance level $\alpha\in(0,1)$ that
\[Q_{1-\alpha}^{(M)} \to Q_{1-\alpha}\]
as \nb{$M_R\to\infty$ and $M_S\to\infty$}, where $Q_{1-\alpha}^{(M)}$ and $Q_{1-\alpha}$ are the $(1-\alpha)$ quantiles of
\[\sum_{k=1}^{(M-1)^2}\lambda_k^{(M)}(\xi_k^2-1)~~~\text{and}~~~\sum_{k=1}^{\infty}\lambda_k(\xi_k^2-1),\]
respectively.
\end{theorem}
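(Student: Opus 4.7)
The plan is to reduce the convergence of quantiles to convergence in distribution of the underlying weighted chi-square-type random variables, and in turn to convergence of the eigenvalue sequences. Write $\lambda_j^{(M)} \uparrow$ for the arrays on each side of the product and $\lambda_{i,j}^{(M)}$ for the eigenvalues of the discrete operators in $\R^p$ and $\R^q$. The approach is (i) to establish spectral convergence of the Nystr\"om-type discrete operators to the continuous ones; (ii) to upgrade pointwise eigenvalue convergence to convergence in an $\ell^2$ sense strong enough to guarantee weak convergence of the quadratic-form distributions; and (iii) to conclude quantile convergence by continuity of the limiting CDF.

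First, I would identify the matrix $(\fI_M-\fJ_M/M)\fD^{(M)}(\fI_M-\fJ_M/M)/M$ as the Nystr\"om discretization (at the nodes of $\cG^{p}_{0,M_R,M_S}$ with uniform weights $1/M$) of the integral operator $T$ on $L^2(U_p)$ with kernel $-d_\mU(\bmu,\bmu')$. Since the uniform discrete distribution on $\cG^{p}_{0,M_R,M_S}$ converges weakly to $U_p$ (Proposition~\ref{prop:unif}) and $d_\mU$ is continuous and bounded on the compact set $\overline\bS_p\times\overline\bS_p$, standard results on Nystr\"om / collectively-compact approximations (e.g.\ Atkinson-type spectral convergence for compact integral operators with continuous kernels on a compact domain) imply that for each fixed $j$, $\lambda_{1,j}^{(M)}\to\lambda_{1,j}$ as $M_R,M_S\to\infty$. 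An analogous statement holds for $\lambda_{2,j}^{(M)}\to\lambda_{2,j}$.

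Next, I would observe that the continuous operator $T$ is Hilbert-Schmidt with $\sum_j\lambda_{1,j}^2=\iint d_\mU(\bmu,\bmu')^2\,dU_p(\bmu)\,dU_p(\bmu')<\infty$, while the Frobenius norm of the discrete matrix equals $M^{-2}\sum_{j,k}d_\mU(\bmu_j^{(M)},\bmu_k^{(M)})^2+o(1)$, which by weak convergence of the empirical measure on the grid (and boundedness/continuity of $d_\mU^2$) converges to $\iint d_\mU^2\,dU_p\,dU_p$. Hilbert-Schmidt norm convergence combined with pointwise convergence of eigenvalues yields $\ell^2$ convergence, that is, $\sum_j(\lambda_{1,j}^{(M)}-\lambda_{1,j})^2\to 0$, and similarly on the $\mV$-side. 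Passing to products and sorting, one then obtains $\sum_k(\lambda_k^{(M)}-\lambda_k)^2\to 0$ (with the convention $\lambda_k^{(M)}=0$ for $k>(M-1)^2$), using that tails of the product sequence are controlled by the product of tails of the factor sequences.

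With $\{\xi_k\}$ a fixed i.i.d.\ standard Gaussian sequence on a common probability space, the random variable $S^{(M)}:=\sum_{k=1}^{(M-1)^2}\lambda_k^{(M)}(\xi_k^2-1)$ satisfies $\E(S^{(M)}-S)^2=2\sum_k(\lambda_k^{(M)}-\lambda_k)^2\to 0$, where $S:=\sum_k\lambda_k(\xi_k^2-1)$; hence $S^{(M)}\stackrel{\sf d}{\longrightarrow} S$. The CDF of $S$ is continuous and strictly increasing on its support (it is a convolution involving infinitely many nondegenerate $\chi_1^2$ factors), so convergence in distribution implies convergence of quantiles at every level $\alpha\in(0,1)$, yielding $Q^{(M)}_{1-\alpha}\to Q_{1-\alpha}$. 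The main obstacle I anticipate is Step~(i): making the Nystr\"om spectral convergence fully rigorous when the grid converges only weakly (not uniformly) to $U_p$, which requires invoking an Arzel\`a--Ascoli / collectively compact argument based on the equicontinuity of $d_\mU$ on $\overline\bS_p\times\overline\bS_p$.
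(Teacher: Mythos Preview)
Your proposal is correct, but it differs meaningfully from the paper's argument. The paper reduces the quantile convergence to convergence in distribution, as you do, but then proceeds via moment-generating functions: it writes the MGF of each quadratic form as an infinite product $\prod_k (1-2t\lambda_k)^{-1/2}e^{-t\lambda_k}$, uses trace-class summability $\sum_k\lambda_k=\E\|\mU-\mU_*\|\cdot\E\|\mV-\mV_*\|<\infty$ to ensure the limiting product is a bona fide nonzero analytic function near $t=0$, and then invokes pointwise eigenvalue convergence $\lambda_k^{(M)}\to\lambda_k$ (citing the same result you would need for Step~(i), namely equation~\eqref{eq:gammaconver} from Lemma~\ref{lem:aux1}) to conclude. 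Your route instead couples $S^{(M)}$ and $S$ in $L^2$ on a common probability space, which requires only the weaker Hilbert--Schmidt ($\ell^2$) convergence of eigenvalues rather than trace-class control; you then obtain this $\ell^2$ convergence from the elementary Hilbert-space fact that pointwise convergence together with norm convergence implies strong convergence. Your passage from $\ell^2$ convergence of the factor eigenvalues $\lambda_{i,j}^{(M)}$ to that of the sorted product eigenvalues $\lambda_k^{(M)}$ is most cleanly handled by abandoning the sort: since both distributions are invariant under reordering, one may bound $\sum_{j_1,j_2}(\lambda_{1,j_1}^{(M)}\lambda_{2,j_2}^{(M)}-\lambda_{1,j_1}\lambda_{2,j_2})^2$ directly via $ab-cd=(a-c)b+c(b-d)$.

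Regarding your anticipated obstacle in Step~(i): the paper's Lemma~\ref{lem:aux1} carries out exactly the collectively-compact/Anselone argument you describe (applied to the operators $f\mapsto\E[g_i^{(n)}(\cdot,\mZ_i^{(n)})f(\mZ_i^{(n)})]$ with $g_i^{(n)}=-d_{\mU^{(n)}}$), using equicontinuity of $d_{\mU^{(n)}}$ on $\overline\bS_p\times\overline\bS_p$ and weak convergence of the grid measure. One small point to tighten: the Frobenius norm of your matrix is $M^{-2}\sum_{j,k}d_{\mU^{(M)}}(\bmu_j^{(M)},\bmu_k^{(M)})^2$, with the \emph{discretely} centered kernel $d_{\mU^{(M)}}$ rather than $d_\mU$; the uniform convergence $d_{\mU^{(M)}}\to d_\mU$ (established in the proof of Theorem~\ref{thm:null}) justifies the $o(1)$ you wrote.
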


%Therefore, we can approximate the $(1-\alpha)$ quantile of quadratic form in normal random variables $\sum_{k=1}^{\infty}\lambda_k(\xi_k^2-1)$ in any accuracy.
%For a sufficiently large $N$,
%the tail probability of quadratic form $\sum_{k=1}^{N^2}\lambda_k^{(N)}(\xi_k^2-1)$
%can be numerically evaluated using 
%approaches summarized in \citet{MR2580921}, namely 
%\citeauthor{farebrother1984}'s (\citeyear{farebrother1984}) algorithm or \citeauthor{MR0137199}'s (\citeyear{MR0137199}) method. %Accordingly, we can numerically compute quantile $Q_{1-\alpha}$ for any significance level $\alpha$.

Consequently, we can approximate the $(1-\alpha)$ quantile of quadratic form  $\sum_{k=1}^{\infty}\lambda_k(\xi_k^2-1)$ 
by estimating that of quadratic form $\sum_{k=1}^{(M-1)^2}\lambda_k^{(M)}(\xi_k^2-1)$
for a sufficiently large $M$.
The latter is done  by solving the inverse of
the cumulative distribution function of quadratic form $\sum_{k=1}^{(M-1)^2}\lambda_k^{(M)}(\xi_k^2-1)$, 
which can be numerically evaluated using 
\citeauthor{farebrother1984}'s (\citeyear{farebrother1984}) algorithm or \citeauthor{MR0137199}'s (\citeyear{MR0137199}) method.

\section{Numerical studies}\label{sec:simulation}

This section compares the performances of our tests using 
(i) \nb{the} theoretical rejection threshold {$Q_{1-\alpha}$ defined in \eqref{eq:test} and computed using the approximation in Section~\ref{subsec:eigv},} and 
(ii) a Monte Carlo simulation-based rejection threshold to the existing tests of
independence \nb{that use} (iii) distance covariance with marginal ranks \citep{lin2017copula}, and (iv) distance covariance \citep{MR3053543}.

{The test
via distance covariance with marginal ranks proceeds as follows. 
Write $\mx_i = (x_{i,1},...,x_{i,p})$ for $i\in\zahl{n}$.
Let $r_{i,k}$ be the rank of $x_{i,k}$ among $x_{1,k},x_{2,k},\dots,x_{n,k}$ for each $k\in\zahl{p}$. 
The marginal rank (vector) of $\mx_i$ is defined as $(r_{i,1},...,r_{i,p})$. 
The marginal rank (vector) of $\my_i$ is defined similarly. 
Then we run the permutation-based distance covariance test on the marginal ranks instead of the original data. }

\subsection{Simulation results}

We first conduct Monte Carlo simulation experiments on the finite-sample
performance of the proposed test from Section~\ref{sec:tests}. We evaluate the empirical sizes and powers of the four competing tests
stated above for both Gaussian and non-Gaussian distributions.
The values reported below are based on $1,000$ simulations at the
nominal significance level of $0.05$, with sample size
$n\in\{216,432,864,1728\}$, dimensions $p=q\in\{2,3,5,{7}\}$, and correlation
$\rho\in\{0,0.005,0.01,\dots,0.15\}$. {More simulation studies on even higher dimensions of $p=q=10$ and 30 are presented in the supplement, Section~\nb{C}.} 
For tests (iii) and (iv), we resample $n$ times in the permutation procedure.
%All data sets are generated as an i.i.d.~sample from a distribution as
%specified below.

\begin{example}\label{eg:sim-power1}
{The data $(X_1,\ldots,X_n)$ are independently drawn from $(\mX,\mY)\in\R^{p+q}$, which follows a multivariate normal distribution with mean zero and covariance matrix 
$\fI_{p+q}+\tau\fL_{p+q;1,2}+\rho\fL_{p+q;1,p+1}$
(where
$\fL_{d;i,j}:=\me_{d;i}\me_{d;j}^{\top}+\me_{d;j}\me_{d;i}^{\top}$ and
$\me_{d;i}\in\R^{d}$ is the $i$-th standard basis vector in
$d$-dimensional space, i.e., all entries are zero except for the one at the $i$-th position)} with (a) $\tau=0$; (b) $\tau=0.5${; and (c) $\tau=0.9$.}
\end{example}

%\begin{example}\label{eg:sim-power2}
%The data $(\mX,\mY)$ are given by $X_i=Q_{t(3)}(\Phi(X_i^*))$, $i\in\zahl{p}$ and $Y_j=Q_{t(3)}(\Phi(Y_j^*))$, $j\in\zahl{q}$, where $Q_{t(3)}$ stands for the quantile function for Student's $t$-distribution with $3$ degrees of freedom, and $(\mX^*,\mY^*)$ are generated as in Example~\ref{eg:sim-power1}.
%\end{example}
%\begin{example}\label{eg:sim-power3}
%The data $(\mX,\mY)$ are given by $X_i=Q_{t(2)}(\Phi(X_i^*))$, $i\in\zahl{p}$ and $Y_j=Q_{t(2)}(\Phi(Y_j^*))$, $j\in\zahl{q}$, where $Q_{t(2)}$ stands for the quantile function for Student's $t$-distribution with $2$ degrees of freedom, and $(\mX^*,\mY^*)$ are generated as in Example~\ref{eg:sim-power1}.
%\end{example}

\begin{example}\label{eg:sim-power4}
The data $(X_1,\ldots,X_n)$ are independently drawn from $(\mX,\mY)$, which is given by $X_i=Q_{t(1)}(\Phi(X_i^*))$, $i\in\zahl{p}$ and $Y_j=Q_{t(1)}(\Phi(Y_j^*))$, $j\in\zahl{q}$, where $Q_{t(1)}$ stands for the quantile function for Student's $t$-distribution with $1$ degree of freedom (Cauchy distribution), and $(\mX^*,\mY^*)$ are generated as in Example~\ref{eg:sim-power1}.
\end{example}

{
\renewcommand{\tabcolsep}{4pt}
\renewcommand{\arraystretch}{1.10}
\begin{table}[t]
\centering
\caption{{Empirical sizes of the proposed test using theoretical (noted as Hallin(t)) and simulation-based (noted as Hallin(s)) rejection threshold, test via distance covariance with marginal ranks (noted as rdCov), and test via distance covariance (noted as dCov) in Example \ref{eg:sim-power1}(a).}}
\label{tab:size-1}{
{
\small
\begin{tabular}{ccC{.55in}C{.55in}C{.55in}C{.55in}}
$(p,q)$ & $n$ & Hallin(t) & Hallin(s) &  rdCov  &   dCov  \\
\hline
$(2,2)$ &  $216$  &  0.040  &  0.043  &  0.043  &  0.045  \\
$(2,2)$ &  $432$  &  0.037  &  0.047  &  0.048  &  0.050  \\
$(2,2)$ &  $864$  &  0.045  &  0.045  &  0.050  &  0.048  \\
$(2,2)$ & $1728$  &  0.054  &  0.054  &  0.061  &  0.057  \\
$(3,3)$ &  $216$  &  0.047  &  0.047  &  0.058  &  0.053  \\
$(3,3)$ &  $432$  &  0.047  &  0.053  &  0.045  &  0.043  \\
$(3,3)$ &  $864$  &  0.040  &  0.047  &  0.053  &  0.048  \\
$(3,3)$ & $1728$  &  0.049  &  0.049  &  0.043  &  0.050  \\
$(5,5)$ &  $216$  &  0.040  &  0.043  &  0.040  &  0.048  \\
$(5,5)$ &  $432$  &  0.033  &  0.043  &  0.048  &  0.043  \\
$(5,5)$ &  $864$  &  0.047  &  0.050  &  0.040  &  0.048  \\
$(5,5)$ & $1728$  &  0.059  &  0.059  &  0.053  &  0.039  \\
$(7,7)$ &  $216$  &  0.068  &  0.048  &  0.053  &  0.056  \\
$(7,7)$ &  $432$  &  0.064  &  0.050  &  0.054  &  0.053  \\
$(7,7)$ &  $864$  &  0.056  &  0.051  &  0.048  &  0.046  \\
$(7,7)$ & $1728$  &  0.052  &  0.054  &  0.048  &  0.052  \\
\end{tabular}}}
%Results are averaged over $5000$ simulated data sets.
\end{table}
}

In these two examples, the independence hypothesis holds when $\rho=0$.
{We first report the empirical sizes of all four considered tests, 
%the proposed tests using theoretical and simulation-based 
%rejection thresholds, which are 
presented in Table~\ref{tab:size-1}. It can be observed that the proposed tests with either rejection threshold as well as their two competitors control the size effectively. }

The empirical powers for Examples~\ref{eg:sim-power1}--\ref{eg:sim-power4} are
summarized in Figures~\ref{fig:power1a}--\ref{fig:power4c}.
For the proposed test, we present results only for the theoretical 
rejection threshold as the results for the simulation-based threshold are similar and hence omitted. % due to the observation above.
%The powers of tests involving HSIC are always lower than the counterparts
%involving distance covariance and hence omitted.

Several facts are noteworthy. {First, when the 
sample size is large and the dimension is relatively small, throughout
all settings the performance of the proposed test is not much worse
than the two competing ones. It should be highlighted that our method
achieves this performance with smaller computational time, as shown in Figure~\ref{fig:time} and also confirmed in our theoretical analysis of computational cost.}  {Second, the proposed test beats the other two when the within-group correlation is high, i.e., as $\tau$ becomes larger from the setting (a) to (c), even when the dimension is high. Third,} for heavy-tailed distributions, the tests via 
distance covariance with \nb{center-outward ranks and signs} and marginal ranks perform better than 
the original distance covariance test.
%{Third, the proposed test performs much better relative to the other two competing tests  as within-group correlation becomes higher, i.e., as $\tau$ becomes larger from setting (a) to (c), even when the dimension is high.}
Lastly, compared to its competitors, the proposed test appears to be more sensitive to dimension. This is as expected.
%{Lastly, one may wonder how these tests compare with the corresponding parametric tests, for example, the likelihood ratio test. 
%\citet{MR2382665} observed that, based on multiple simulations, 
%when the dependence structure is nonlinear, 
%distance covariance test is more powerful than the parametric likelihood ratio test, 
%while in the multivariate normal case, the power of distance covariance test
%is quite close to that of the likelihood ratio test. }

\begin{figure}[!htbp]
\centering
\includegraphics[width=\textwidth]{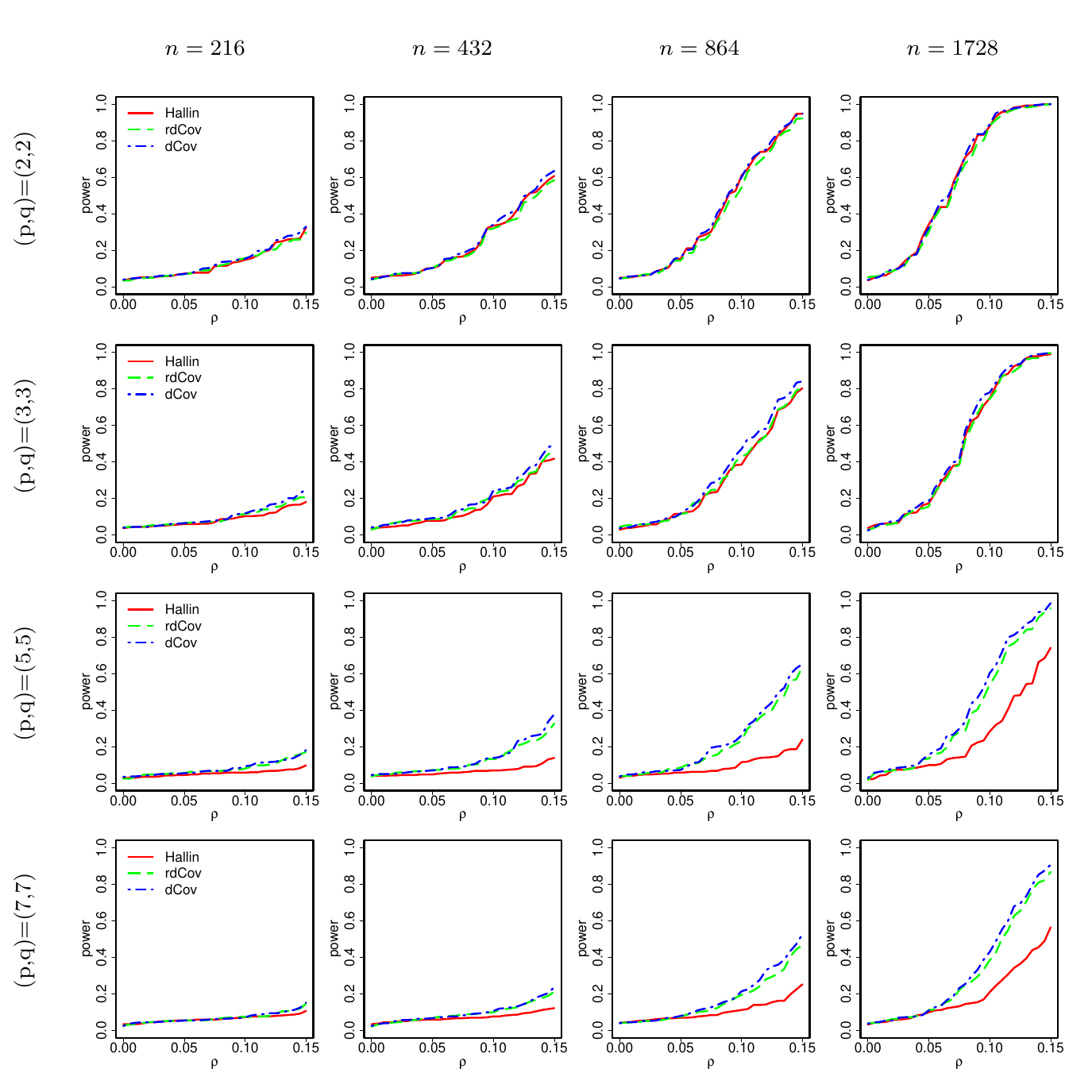}
\caption{Empirical powers of the three competing tests 
in Example~\ref{eg:sim-power1}(a). 
The $y$-axis represents the power based on 1,000 replicates and 
the $x$-axis represents the level of a desired signal.}\label{fig:power1a}
\end{figure}

\begin{figure}[!htbp]
\centering
\includegraphics[width=\textwidth]{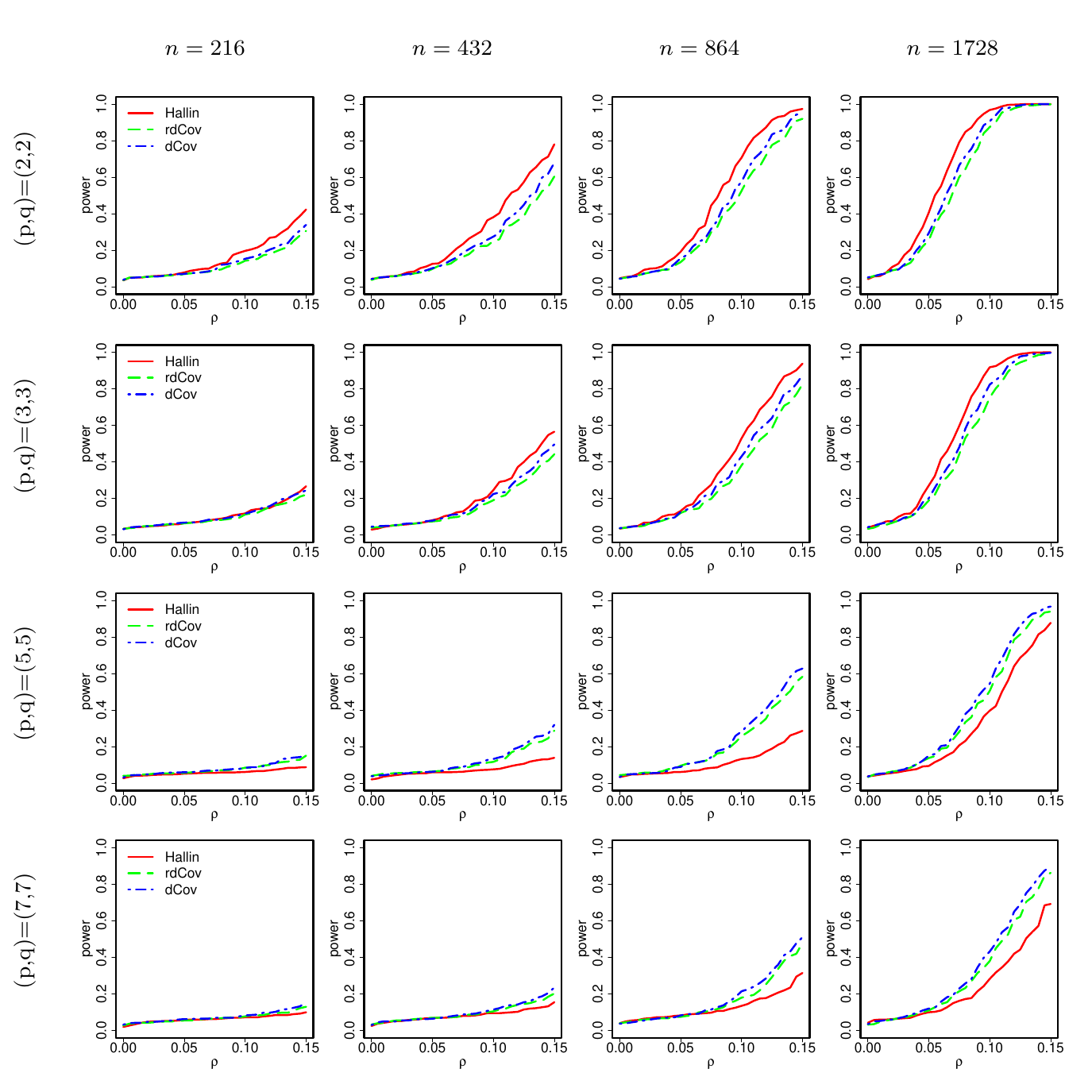}
\caption{Empirical powers of the three competing tests 
in Example~\ref{eg:sim-power1}(b). 
The $y$-axis represents the power based on 1,000 replicates and 
the $x$-axis represents the level of a desired signal.}\label{fig:power1b}
\end{figure}

\begin{figure}[!htbp]
\centering
\includegraphics[width=\textwidth]{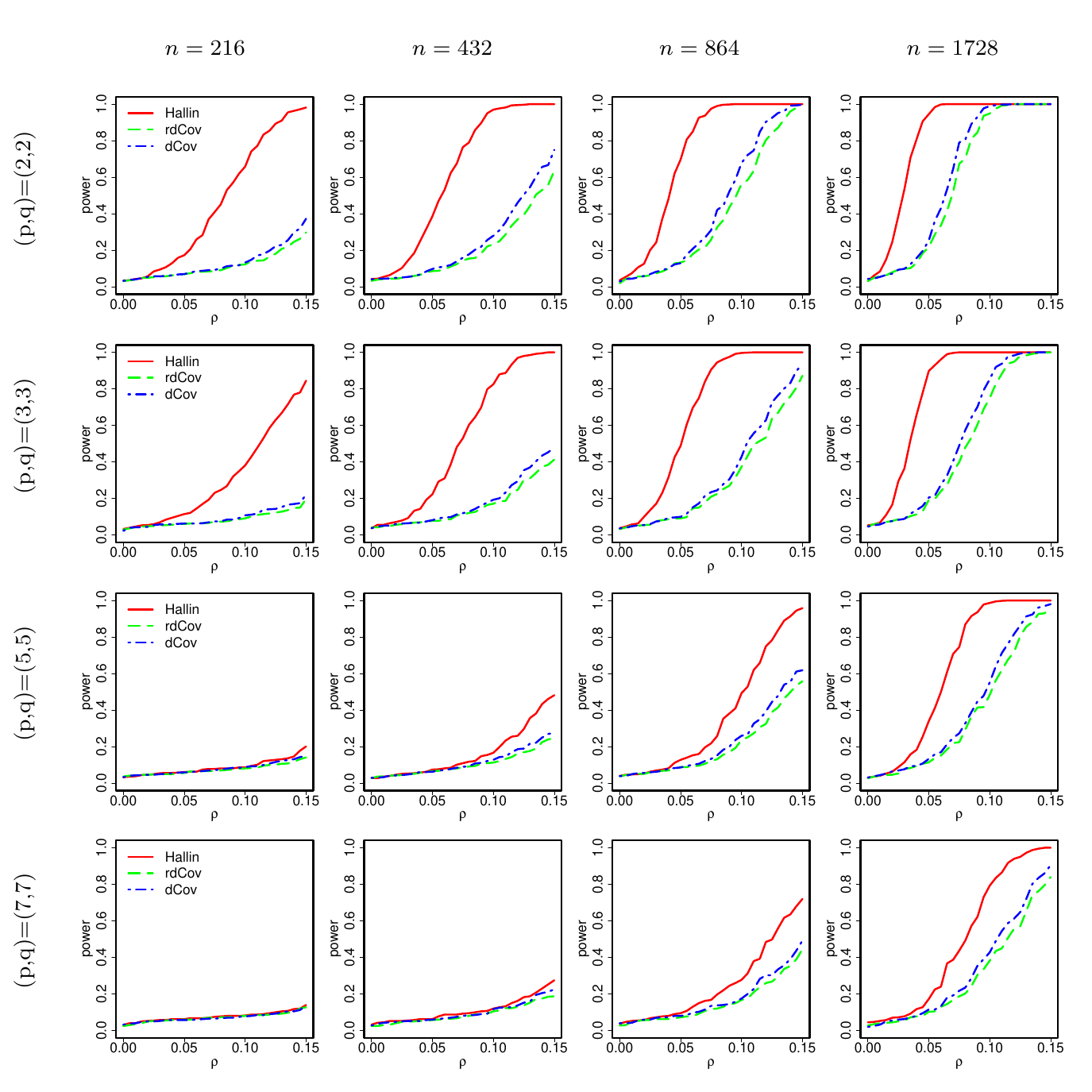}
\caption{Empirical powers of the three competing tests 
in Example~\ref{eg:sim-power1}(c). 
The $y$-axis represents the power based on 1,000 replicates and 
the $x$-axis represents the level of a desired signal.}\label{fig:power1c}
\end{figure}

\begin{figure}[!htbp]
\centering
\includegraphics[width=\textwidth]{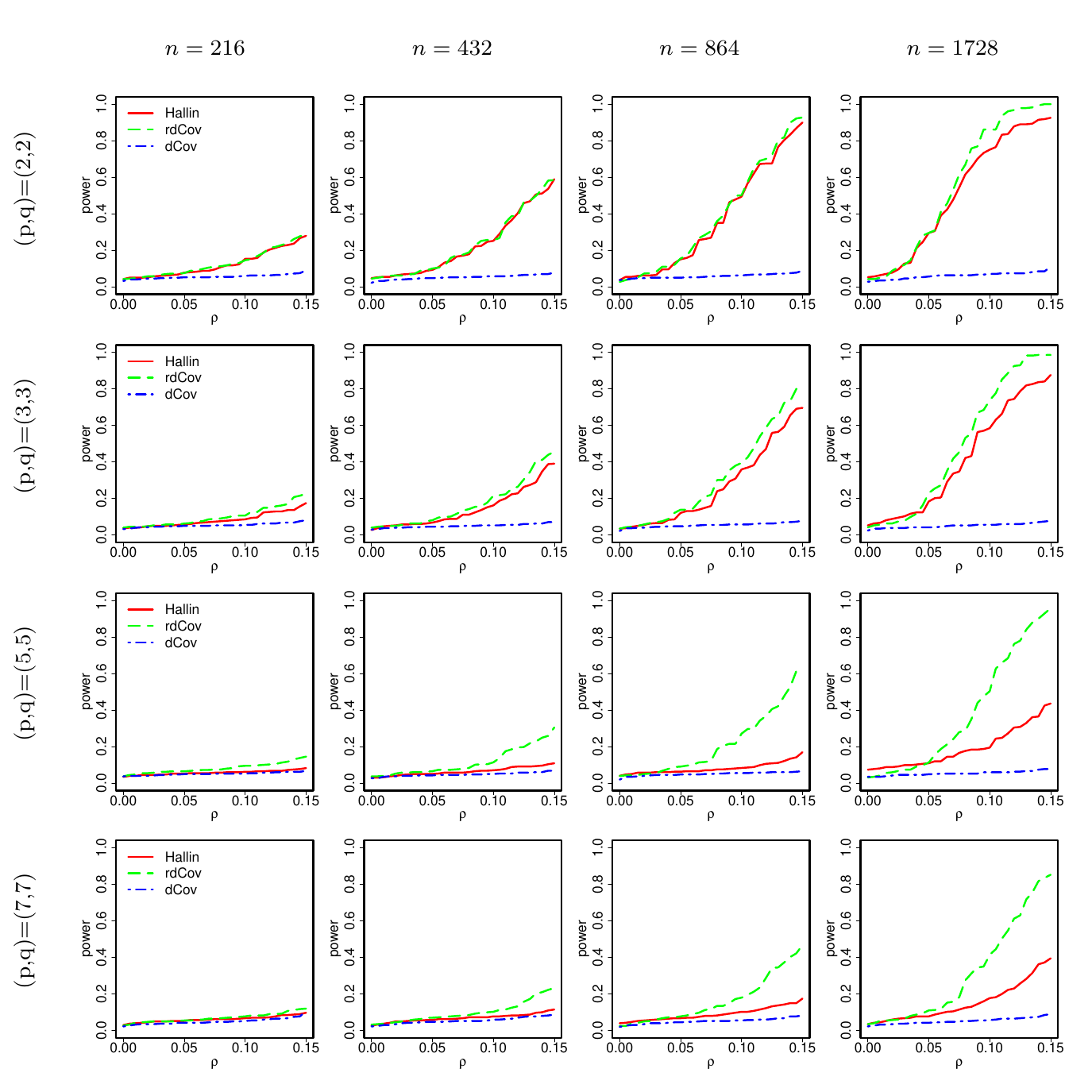}
\caption{Empirical powers of the three competing tests 
in Example~\ref{eg:sim-power4}(a). 
The $y$-axis represents the power based on 1,000 replicates and 
the $x$-axis represents the level of a desired signal.}\label{fig:power4a}
\end{figure}

\begin{figure}[!htbp]
\centering
\includegraphics[width=\textwidth]{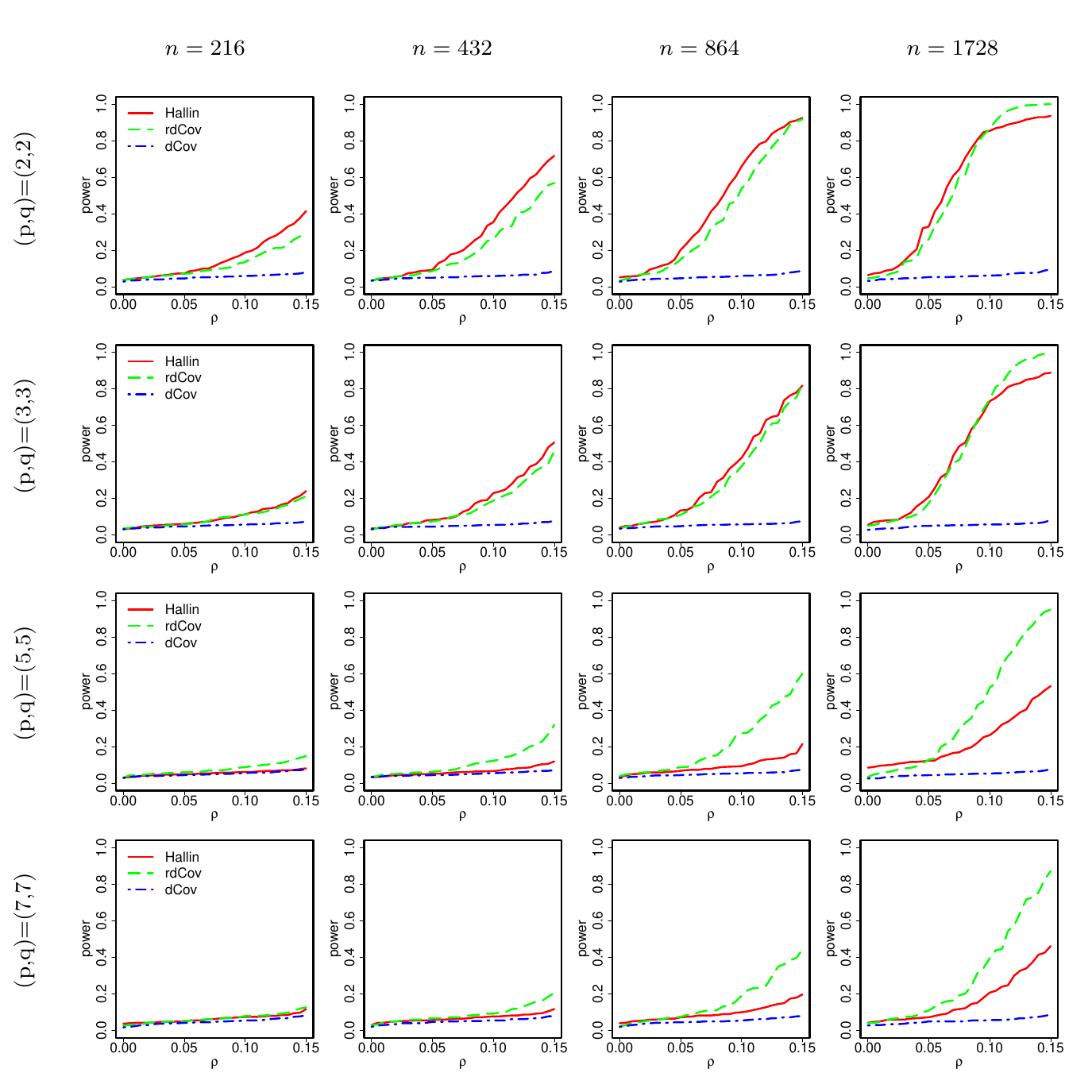}
\caption{Empirical powers of the three competing tests 
in Example~\ref{eg:sim-power4}(b). 
The $y$-axis represents the power based on 1,000 replicates and 
the $x$-axis represents the level of a desired signal.}\label{fig:power4b}
\end{figure}

\begin{figure}[!htbp]
\centering
\includegraphics[width=\textwidth]{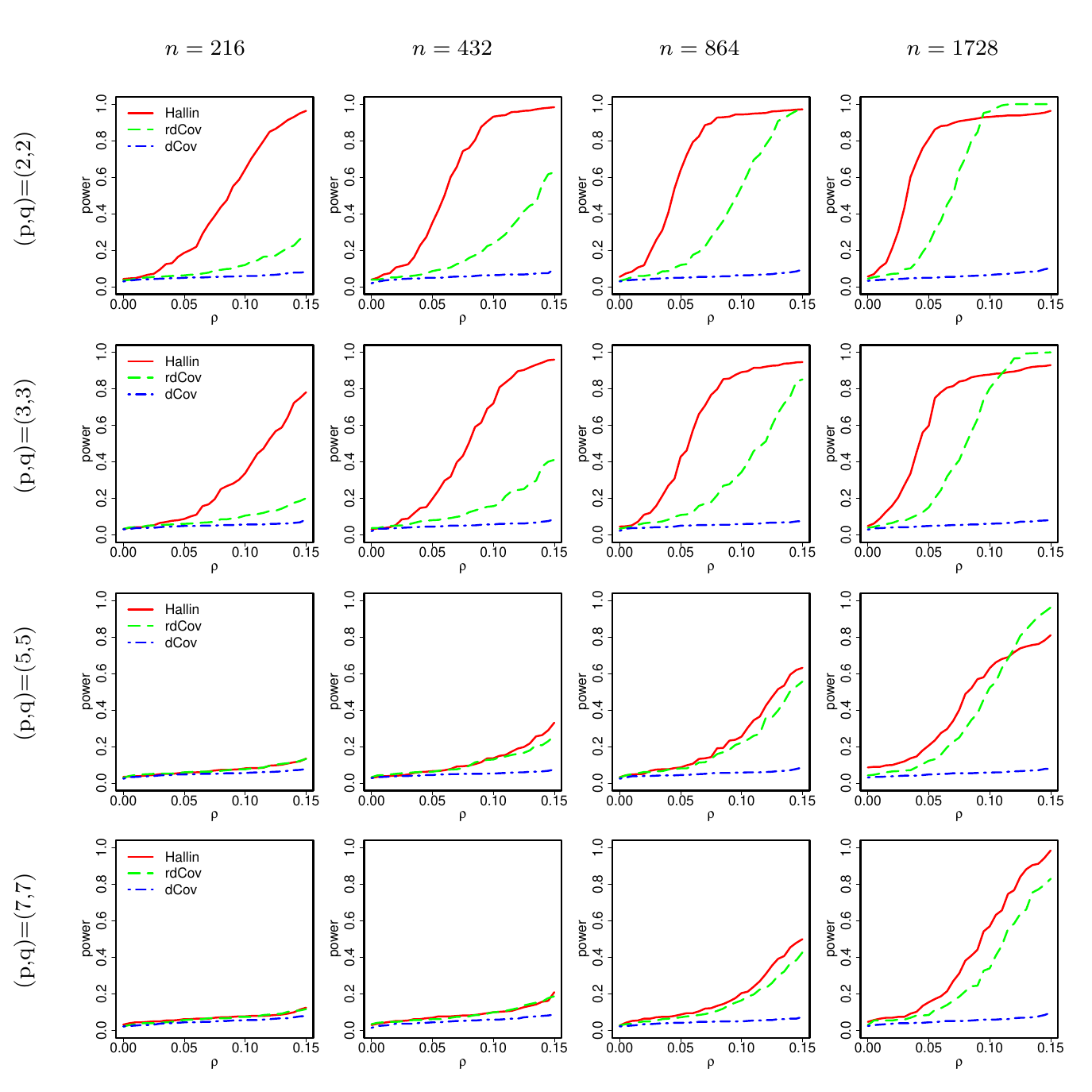}
\caption{Empirical powers of the three competing tests 
in Example~\ref{eg:sim-power4}(c). 
The $y$-axis represents the power based on 1,000 replicates and 
the $x$-axis represents the level of a desired signal.}\label{fig:power4c}
\end{figure}

\begin{figure}[!htbp]
\centering
\includegraphics[width=.75\textwidth]{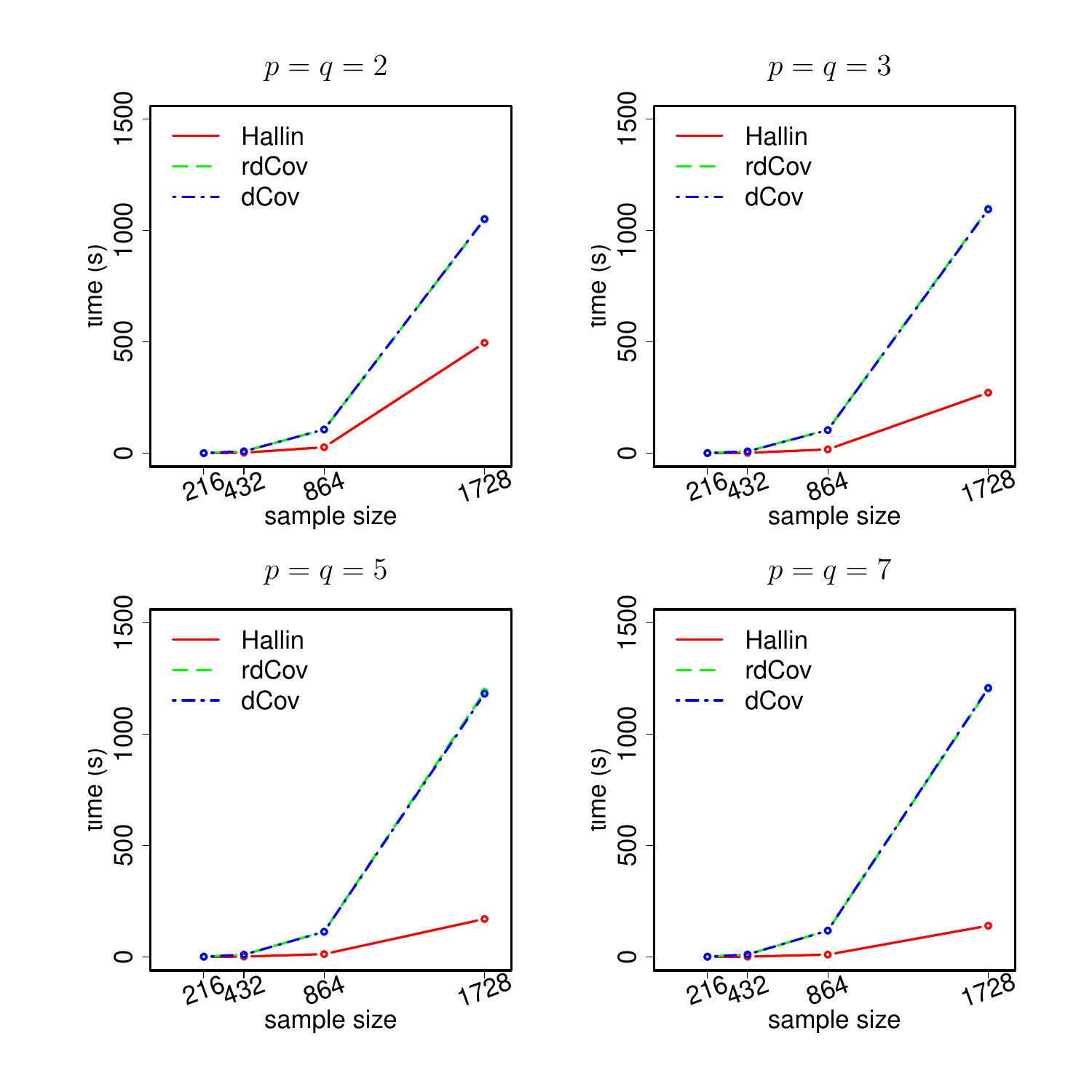}
\caption{A comparison of computation time 
in Example \ref{eg:sim-power1}(a)
for the three tests. 
The $y$-axis represents the averaged computation elapsed time 
(in seconds) of 1,000 replicates of a single experiment and 
the $x$-axis represents the sample size. To compute the optimal matching, we used the algorithm in \citet{MR1015271}.}\label{fig:time}
\end{figure}

{
\subsection{Real stock market data analysis} \label{sec:realdata}

We analyze the monthly log returns of daily closing prices for stocks
that are constantly in the Standard \& Poor 100 (S\&P 100) index
during the time period 2003 to 2012. The data are from Yahoo! Finance
({\sf finance.yahoo.com}), and the stocks are classified into 10
sectors by Global Industry Classification Standard (GICS). Stock
market data tend to be heavy-tailed with many outliers, and monthly
log returns may reasonably be modeled as independent and identically
distributed random variables.
% and empirically also observed to be (nearly) so.
The time period we analyzed includes some well known turbulent stretches like the 2007-08 financial crisis, which, however, could be either explained using heavy-tailed (e.g., elliptical or stable) distribution models or captured as outliers. 

In this section we limit our scope and focus on detecting
between-group dependence between two sectors in S\& P 100 that contain
a rather small number of stocks: (1) Telecommunication, including
stocks ``AT\&T Inc [T]'' and ``Verizon Communications [VZ]'';  and (2)
Materials, including stocks ``Du Pont (E.I.) [DD]'', ``Dow Chemical
[DOW]'', ``Freeport-McMoran Cp \& Gld [FCX]'', and ``Monsanto
Co. [MON]''.  We then consider detection of possible dependence between the Telecommunication sector and any two stocks in the Materials sector. 

To this end, we apply the three considered tests to the monthly log returns of (T,VZ) coupled with either (DD,DOW), or (DD,FCX), or (DD,MON), or (DOW,FCX), or (DOW,MON), or (FCX,MON).  The p-values for these three tests are reported in Table~\ref{tab:pvalue-real}. There, one observes that using the proposed test yields uniformly the strongest evidence to conclude the existence of dependence between (T,VZ) and any two stocks in the Materials sector. }

{
\renewcommand{\tabcolsep}{1pt}
\renewcommand{\arraystretch}{1.10}
\begin{table}[!htb]
\centering
\caption{{P-values based on the proposed test as well as two competing tests for the dataset of US stock closing prices between 2003 and 2012.}}
\label{tab:pvalue-real}{
{
\small
\begin{tabular}{C{.75in}C{.75in}C{.75in}C{.75in}C{.75in}C{.85in}C{.85in}C{.85in}}
 &  & (DD,DOW) & (DD,FCX) & (DD,MON) & (DOW,FCX) & (DOW,MON) & (FCX,MON) \\
\hline
  Hallin & (T, VZ) & 0.001 & 0.005 & 0.002 & 0.004 & 0.001 & 0.065 \\
  rdCov &(T, VZ) & 0.002 & 0.013 & 0.005 & 0.009 & 0.002 & 0.070 \\
  dCov & (T, VZ) & 0.002 & 0.018 & 0.003 & 0.012 & 0.002 & 0.101 \\
\end{tabular}}}
%Results are averaged over $5000$ simulated data sets.
\end{table}
}

\nb{
\section*{Acknowledgments}
We thank the co-editor Hongyu Zhao, the anonymous associate editor,
and two anonymous referees for their very detailed and constructive
comments and suggestions, which have helped greatly to improve the quality of the paper.
}

\appendix

\section{Proofs}

%\fbox{double check if all notation has been introduced}

%Let $\mX_1,\dots,\mX_n$ be i.i.d.~copies of $\mX$, with
%$\mX_i=(X_{i,1},\dots,X_{i,p})^\top$.  Let
%$j\ne k\in[p]$, and let $h: (\R^{2})^m\to\R$ be a kernel of order
%$m$.  

Further concepts concerning U-statistics are needed.
For  any symmetric kernel $h$, any integer $\ell\in\zahl{m}$, 
and any probability measure $\Pr_{\mX}$, we remind the definition of
\begin{align*}
h_{\ell}(\mx_1\ldots,\mx_{\ell}; \Pr_{\mX})&:=\E h(\mx_1\ldots,\mx_{\ell},\mX_{\ell+1},\ldots,\mX_m),
\end{align*}
and write
\begin{align*}
\widetilde h_{\ell}(\mx_1,\ldots,\mx_{\ell}; \Pr_{\mX}) 
&:=h_{\ell}(\mx_1,\ldots,\mx_{\ell};\Pr_{\mX})
  -\E h-\sum_{k=1}^{\ell-1}\sum_{1\leq i_1<\cdots<i_k\leq\ell}
  \widetilde h_{k}(\mx_{i_1},\ldots,\mx_{i_k};\Pr_{\mX}),\yestag\label{eq:han-hl}
\end{align*}
where $\mX_1,\ldots,\mX_m$ are $m$ independent random variables with
law $\Pr_{\mX}$ and $\E h:=\E h(\mX_1,\ldots,\mX_m)$. 
%The kernel as well as the corresponding U-statistic are said to be \emph{degenerate} under $\Pr_{\mX}$ if $h_1(\cdot)$ has variance zero.  
%We use the term \emph{completely degenerate} to indicate that the
%variances of $h_1(\cdot),\ldots,h_{m-1}(\cdot)$ are all zero.
%We let $H_n^{(\ell)}(\cdot;\Pr_{\mZ})$ be the U-statistic based on the
%completely degenerate kernel $h^{(\ell)}(\cdot;\Pr_{\mZ})$ from~(\ref{eq:han-hl}):
%\begin{equation}
%H_n^{(\ell)}(\cdot;\Pr_{\mZ}):=\mbinom{n}{\ell}^{-1}\sum_{1\le i_1< i_2<\cdots< i_\ell\le n} h^{(\ell)}\Big(\mZ_{i_1},\ldots,\mZ_{i_\ell};\Pr_{\mZ}\Big).
%\end{equation}
%We have the following Hoeffding decomposition
%\[\widehat U_n = \E h+\sum_{\ell=1}^{m} \mbinom{m}{\ell}H_n^{(\ell)}(\cdot;\Pr_{\mZ}).\]
%If we further define $\delta_{\ell}(\cdot;\Pr_{\mZ})=\binom{m}{\ell}\widetilde h_{\ell}(\cdot;\Pr_{\mZ})$, 
%and let $ D_n^{(\ell)}(\cdot;\Pr_{\mZ})$ be the corresponding U-statistic 
%based on the completely degenerate kernel $\delta^{(\ell)}(\cdot;\Pr_{\mZ})$:
%\begin{equation}
%D_{\ell}(\cdot;\Pr_{\mZ}):=\mbinom{n}{\ell}^{-1}\sum_{1\le i_1< i_2<\cdots< i_\ell\le n} \delta_{\ell}\Big(\mZ_{i_1},\ldots,\mZ_{i_\ell};\Pr_{\mZ}\Big),
%\end{equation}
We also have
\begin{align*}
   &\mbinom{n}{m}^{-1}\sum_{1\le i_1< i_2<\cdots< i_m\le n} h\Big(\mX'_{i_1},\ldots,\mX'_{i_m}\Big) \\
=\;&\E h + \sum_{\ell=1}^{m}\mbinom{m}{\ell} \mbinom{n}{\ell}^{-1}\sum_{1\le i_1< i_2<\cdots< i_\ell\le n} \widetilde h_{\ell}\Big(\mX'_{i_1},\ldots,\mX'_{i_\ell};\Pr_{\mX}\Big),\yestag
\end{align*}
for any (possibly dependent) random variables $\mX'_1,\dots, \mX'_n$. This is the Hoeffding decomposition with respect to $\Pr_{\mX}$.

\paragraph{Additional notation.}
Let $(n)_r$ denote $n!/(n-r)!$.
The cardinality of a set $\mathcal{S}$ is written $\card(\mathcal{S})$.
For a multiset $\cM=\{x_1,\dots,x_n\}$ and $r\in\zahl{n}$,
an $r$-permutation of $\cM$ is a sequence $[x_{\sigma(i)}]_{i=1}^{r}$,
where $\sigma$ is a bijection from $\zahl{n}$ to itself.
For $r\in\zahl{n}$, let $I^{n}_{r}$ denote the family of all $(n)_r$ possible $r$-permutations of set $\zahl{n}$.
For $x\in\R$, let
$x_{+}=\max\{x,0\}$ denote the positive part of $x$.
Let $\mx\circ\my$ and $\mx\cdot\my$ denote the Hadamard product and
dot product of two vectors $\mx,\my\in\R^d$, respectively.
We use $\stackrel{\sf p}{\longrightarrow}$ 
to denote convergence in probability.  We use $\sfi$ to represent the imaginary unit. 
%We first introduce more notation. For any vector $\mv\in\mathbb{R}^p$, we denote $\|\mv\|$ as its Euclidean norm.
%We define the $L^{\infty}$ norm of a random variable as $\lVert X\rVert_{\infty}=\inf\{t\ge 0:|X|\le t\text{ a.s.}\}$, the $\psi_2$ (sub-gaussian) norm as $\lVert X\rVert_{\psi_2}=\inf\{t>0:\E\exp(X^2/t^2)\le 2\}$, and the $\psi_1$ (sub-exponential) norm as $\lVert X\rVert_{\psi_1}=\inf\{t>0:\E\exp(|X|/t)\le 2\}$.
%For any measure $\Pr_{Z}$ and kernel $h$, we let
%$H_n^{(\ell)}(\cdot;\Pr_{Z})$ be the U-statistic based on the
%completely degenerate kernel $h^{(\ell)}(\cdot;\Pr_{Z})$ from~(\ref{eq:han-hl}):
%\begin{equation}
%H_n^{(\ell)}(\cdot;\Pr_{Z}):=\mbinom{n}{\ell}^{-1}\sum_{1\le i_1< i_2<\cdots< i_\ell\le n} h^{(\ell)}\Big(Z_{i_1},\ldots,Z_{i_\ell};\Pr_{Z}\Big).
%\end{equation}

\subsection{Proofs for Section \ref{sec:prelim} of the main paper}

\subsubsection{Proof of Proposition \ref{prop:unif}}

\begin{proof}[Proof of Proposition \ref{prop:unif}]
We first prove the case $n_0=0$ and then generalize to $n_0>0$. 
For simpler presentation, 
let $\lambda_{n_0,n_R,{n_S}}$ denote the uniform measure (distribution) on the augmented grid $\cG^{d}_{n_0,n_R,{n_S}}$, 
let $\mu_{n_R}$ denote the uniform measure on the points $\{\frac{j}{n_R+1}: j\in\zahl{n_R}\}$, 
and let $\nu_{{n_S}}$ denote the uniform measure on the points 
$\{\mr_{k}: k\in\zahl{n_S}\}$.  Furthermore,
let $\mu$ denote the uniform measure on $[0,1)$,
and let $\nu$ denote the uniform measure over the unit sphere $\cS_{d-1}$.

If $n_0=0$, then $\lambda_{n_0,n_R,{n_S}}$ is the product measure of 
$\mu_{n_R}$ (for the radius) and  $\nu_{\bm{n_S}}$ (for the unit sphere). 
By assumption, $\nu_{{n_S}}$ weakly converges to $\nu$ as $n_S\to\infty$. 
Moreover, $\mu_{n_R}$ weakly converges to $\mu$ as $n_R\to\infty$ by the following argument: 
\[\mu_{n_R}\Big((0,x]\Big)=\frac{\lfloor n_R x\rfloor}{n_R}\to x =\mu\Big((0,x]\Big),
~~~\text{for}~x\in(0,1),\]
as $n_R\to\infty$. Combining these facts, 
and applying Theorem~2.8 in \citet{MR1700749} to the separable space $\bS_d$, 
%we deduce that for any measurable sets $A\subseteq [0,1)$ and $B\subseteq \cS_{d-1}$,
%\begin{align*}
%\lambda_{n_0,n_R,\bm{n_S}}(A\times B)
%=(\mu_{n_R}\times \nu_{\bm{n_S}})(A\times B)
%=\ &\mu_{n_R}(A)\nu_{\bm{n_S}}(B)\\
%\to\,&\mu(A)\nu(B)
%=(\mu\times \nu)(A\times B)
%=U_d (A\times B)
%\end{align*}
we deduce that $\lambda_{n_0,n_R,{n_S}}$, 
%the uniform measure on the augmented grid $\cG^{d}_{n_0,n_R,\bm{n_S}}$, 
the product measure of $\mu_{n_R}$  and  $\nu_{{n_S}}$, 
weakly converges to $\mu\times\nu=U_d$ as $n_R,n_S\to\infty$. 

If $n_0>0$, we compare the uniform measure on the augmented grid $\cG^{d}_{0,n_R,{n_S}}$
(denoted by $\lambda_{0,n_R,{n_S}}$) 
and that on $\cG^{d}_{n_0,n_R,{n_S}}$. 
For any $U_d$-continuity set $D\subseteq\bS_d$, we obtain
\[\lambda_{0,n_R,{n_S}}(D)=\frac{\card (D\cap\cG^{d}_{0,n_R,{n_S}})}{n-n_0}
~~~\text{and}~~~
\lambda_{n_0,n_R,{n_S}}(D)=\frac{\card (D\cap\cG^{d}_{0,n_R,{n_S}})+n_0\ind(\bm{0}\in D)}{n}.
\]
Therefore,
\begin{align*}
\lvert\lambda_{0,n_R,{n_S}}(D)-\lambda_{n_0,n_R,{n_S}}(D)\rvert
%=\Big\lvert\frac{\card (D\cap\cG^{d}_{0,n_R,{n_S}})}{n-n_0}-\frac{\card (D\cap\cG^{d}_{0,n_R,{n_S}})+n_0\ind(\bm{0}\in D)}{n}\Big\rvert\\
\le\;&\Big(\frac{1}{n-n_0}-\frac{1}{n}\Big)\card (D\cap\cG^{d}_{0,n_R,{n_S}})+\frac{n_0}{n}\\
\le\;&\Big(\frac{1}{n-n_0}-\frac{1}{n}\Big)(n-n_0)+\frac{n_0}{n}=\frac{2n_0}{n}\to 0,\yestag\label{eq:measurediff}
\end{align*}
where the last step follows by noticing 
\[\frac{n_0}{n}<\frac{\min\{n_R,n_S\}}{n}\le\frac{n_S}{n_Rn_S+n_0}\le \frac{1}{n_R}\to0\]
as $n_R\to\infty$. 
We have proven in the case $n_0=0$ that $\lambda_{0,n_R,{n_S}}$
weakly converges to $U_d$ 
and then $\lambda_{0,n_R,{n_S}}(D)\to U_d(D)$. 
This, together with \eqref{eq:measurediff}, proves that
$\lambda_{n_0,n_R,{n_S}}(D)\to U_d(D)$ for any $U_d$-continuity Borel set  $D\subseteq\bS_d$, 
and equivalently, 
$\lambda_{n_0,n_R,{n_S}}$ weakly converges to $U_d$ 
as $n_R,n_S\to\infty$.
\end{proof}

\subsection{Proofs for Section \ref{sec:tests} of the main paper}

\subsubsection{Proof of Proposition \ref{prop:equiv}}

\begin{proof}
The equivalence of these three versions of the sample distance
covariance is well known; we include a proof for completeness but claim no originality here. 

The sample distance covariance defined in \citet{MR3053543} 
and \citet{MR3269983} can be described as follows. 
First define 
\begin{align*}
a_{i,j}&:=\lVert \mX_i-\mX_j\rVert,
~~~a_{i,+}:=\sum_{\ell=1}^{n}a_{i,\ell},
~~~a_{+,j}:=\sum_{k=1}^{n}a_{k,j},
~~~a_{+,+}:=\sum_{k,\ell=1}^{n}a_{k,\ell},\\
%b_{i,j}&:=\lVert \mY_i-\mY_j\rVert,
%~~~b_{i,+}:=\sum_{\ell=1}^{n}b_{i,\ell},
%~~~b_{+,j}:=\sum_{k=1}^{n}b_{k,j},
%~~~b_{+,+}:=\sum_{k,\ell=1}^{n}b_{k,\ell},\\
A^*_{i,j}&:=\begin{cases}a_{i,j}-\frac{1}{n-1}a_{i,+}-\frac{1}{n-1}a_{+,j}+\frac{1}{n(n-1)}a_{+,+}, & \text{if }i\ne j,\\
\frac{1}{n-1}a_{i,+}-\frac{1}{n(n-1)}a_{+,+}, & \text{if }i=j,\end{cases}\\
%B^*_{i,j}&:=\begin{cases}\frac{n}{n-1}(\frac{n-1}{n}b_{i,j}-\frac{1}{n}b_{i,+}-\frac{1}{n}b_{+,j}+\frac{1}{n^2}b_{+,+}), & \text{if }i\ne j,\\
%\frac{n}{n-1}(\frac{1}{n}b_{i,+}-\frac{1}{n^2}b_{+,+}), & \text{if }i=j,\end{cases}\\
\widetilde A_{i,j}&:=\begin{cases}a_{i,j}-\frac{1}{n-2}a_{i,+}-\frac{1}{n-2}a_{+,j}+\frac{1}{(n-1)(n-2)}a_{+,+}, & \text{if }i\ne j,\\
0, & \text{if }i=j,\end{cases}
%\widetilde B_{i,j}&:=\begin{cases}b_{i,j}-\frac{1}{n-2}b_{i,+}-\frac{1}{n-2}b_{+,j}+\frac{1}{(n-1)(n-2)}b_{+,+}, & \text{if }i\ne j,\\
%0, & \text{if }i=j.\end{cases}
\end{align*}
Similarly, we introduce the distances $b_{i,j}:=\lVert
\mY_i-\mY_j\rVert$, and define the 
sums $b_{i,+},b_{+,j},b_{+,+}$, and corresponding $B^*_{i,j}$,
$\widetilde B_{i,j}$ in analogy to the quantities for the $\mX_i$. 
Then the sample distance covariance from Definition~1 in \citet{MR3053543} is
\begin{equation}\label{eq:def2013}
\dCov^2_n\Big([\mX_{i}]_{i=1}^{n},[\mY_{i}]_{i=1}^{n}\Big):=
\frac{1}{n(n-3)}\Big\{\sum_{i,j=1}^{n}A^*_{i,j}B^*_{i,j}-\frac{n}{n-2}\sum_{i=1}^{n}A^*_{i,i}B^*_{i,i}\Big\},
\end{equation}
and the sample distance covariance from Equation~(3.2) in \citet{MR3269983} is
\begin{equation}\label{eq:def2014}
\dCov^2_n\Big([\mX_{i}]_{i=1}^{n},[\mY_{i}]_{i=1}^{n}\Big):=
\frac{1}{n(n-3)}\sum_{i\ne j}\widetilde A_{i,j}\widetilde B_{i,j}.
\end{equation}
%The sample distance covariance defined by Definition~3.2 in our paper is given by
%\begin{equation}\label{eq:def2019}
%\dCov^2_n\Big([\mX_{i}]_{i=1}^{n},[\mY_{i}]_{i=1}^{n}\Big):=
%\binom{n}{4}^{-1}
%\sum_{1\le i_1<\dots<i_4\le n}K\Big((\mX_{i_1},\mY_{i_1}),\dots,(\mX_{i_4},\mY_{i_4})\Big),
%\end{equation}
%where
%\begin{align*}
%K\Big((\mx_{1},\my_{1}),\dots,(\mx_{4},\my_{4})\Big)
%&:=\frac{1}{4\cdot4!}\sum_{[i_1,\dots,i_4]\in\rP(\zahl{4})} s(\mx_{i_1},\mx_{i_2},\mx_{i_3},\mx_{i_4})s(\my_{i_1},\my_{i_2},\my_{i_3},\my_{i_4}),\\
%\intertext{where the summation is over all permutations of $[1,2,3,4]$ and}
%s(\mt_{1},\mt_{2},\mt_{3},\mt_{4})&:=\lVert\mt_{1}-\mt_{2}\rVert+\lVert \mt_{3}-\mt_{4}\rVert-\lVert \mt_{1}-\mt_{3}\rVert-\lVert \mt_{2}-\mt_{4}\rVert.
%\end{align*}
We first prove the equivalence between \eqref{eq:def2013} and \eqref{eq:def2014}. 
Lemma 3.1 in \citet{MR3556612} gives that the right-hand side of \eqref{eq:def2014} equals to
\begin{equation}\label{eq:huo2016}
\frac{1}{n(n-3)}\sum_{i\ne j}a_{i,j}b_{i,j}
-\frac{2}{n(n-2)(n-3)}\sum_{i=1}^{n}a_{i,+}b_{i,+}
+\frac{a_{+,+}b_{+,+}}{n(n-1)(n-2)(n-3)}.
\end{equation}
It remains to prove that the right-hand side of \eqref{eq:def2013} equals to \eqref{eq:huo2016} as well, which can be established by straightforward calculation following the proof of Lemma 3.1 in \citet{MR3556612}. 
First, one can verify the following equalities:
\begin{align}
 &a_{i,j}=a_{j,i},~~~a_{i,i}=0,~~~a_{i,+}=a_{+,i},~~~
&&b_{i,j}=b_{j,i},~~~b_{i,i}=0,~~~b_{i,+}=b_{+,i},\label{eq:huoeq1}\\
 &\sum_{i\ne j}a_{i,j}=a_{+,+},~~~
&&\sum_{i\ne j}b_{i,j}=b_{+,+},\label{eq:huoeq1.5}\\
 &\sum_{i\ne j}a_{i,+}=\sum_{i\ne j}a_{+,j}=(n-1)a_{+,+},~~~
&&\sum_{i\ne j}b_{i,+}=\sum_{i\ne j}b_{+,j}=(n-1)b_{+,+},\label{eq:huoeq2}\\
 &\sum_{i\ne j}a_{i,j}b_{i,+}=
  \sum_{i\ne j}a_{i,j}b_{+,j}=\sum_{i=1}^{n}a_{i,+}b_{i,+},~~~
&&\sum_{i\ne j}a_{i,+}b_{i,j}=
  \sum_{i\ne j}a_{+,j}b_{i,j}=\sum_{i=1}^{n}a_{i,+}b_{i,+},\label{eq:huoeq3}\\
% &\sum_{i\ne j}a_{i,j}b_{+,j}=\sum_{i=1}^{n}a_{i,+}b_{i,+},~~~
%&&\sum_{i\ne j}a_{+,j}b_{i,j}=\sum_{i=1}^{n}a_{i,+}b_{i,+},\label{eq:huoeq4}\\
 &\sum_{i\ne j}a_{i,+}b_{+,j}=a_{+,+}b_{+,+}-\sum_{i=1}^{n}a_{i,+}b_{i,+},~~~
&&\sum_{i\ne j}a_{+,j}b_{i,+}=a_{+,+}b_{+,+}-\sum_{i=1}^{n}a_{i,+}b_{i,+}\label{eq:huoeq5}.
\end{align}
Next, we may simplify the right-hand side of \eqref{eq:def2013}. We have
\[
\frac{1}{n(n-3)}\Big\{\sum_{i,j=1}^{n}A^*_{i,j}B^*_{i,j}-\frac{n}{n-2}\sum_{i=1}^{n}A^*_{i,i}B^*_{i,i}\Big\}
=\frac{1}{n(n-3)}\Big\{\sum_{i\ne j}A^*_{i,j}B^*_{i,j}-\frac{2}{n-2}\sum_{i=1}^{n}A^*_{i,i}B^*_{i,i}\Big\},
\]
where 
\begin{align*}
\sum_{i\ne j}A^*_{i,j}B^*_{i,j}
\;&=\sum_{i\ne j}
\Big(a_{i,j}-\frac{a_{i,+}}{n-1}-\frac{a_{+,j}}{n-1}+\frac{a_{+,+}}{n(n-1)}\Big)
\Big(b_{i,j}-\frac{b_{i,+}}{n-1}-\frac{b_{+,j}}{n-1}+\frac{b_{+,+}}{n(n-1)}\Big)\\
\;&=\sum_{i\ne j}
\Big(a_{i,j}b_{i,j}
-\frac{a_{i,j}(b_{i,+}+b_{+,j})+(a_{i,+}+a_{+,j})b_{i,j}}{n-1}
+\frac{(a_{i,+}+a_{+,j})(b_{i,+}+b_{+,j})}{(n-1)^2}\\
\;&\quad+\frac{a_{i,j}b_{+,+}+a_{+,+}b_{i,j}}{n(n-1)}
-\frac{(a_{i,+}+a_{+,j})b_{+,+}+a_{+,+}(b_{i,+}+b_{+,j})}{n(n-1)^2}
+\frac{a_{+,+}b_{+,+}}{n^2(n-1)^2}\Big),\!\!\!\yestag\label{eq:summand1}\\
\sum_{i=1}^{n}A^*_{i,i}B^*_{i,i}
\;&=\sum_{i=1}^{n}
\Big(\frac{a_{i,+}}{n-1}-\frac{a_{+,+}}{n(n-1)}\Big)
\Big(\frac{b_{i,+}}{n-1}-\frac{b_{+,+}}{n(n-1)}\Big)\\
&=\frac{1}{(n-1)^2}\sum_{i=1}^{n}
\Big(a_{i,+}b_{i,+}-\frac{a_{i,+}b_{+,+}+a_{+,+}b_{i,+}}{n}+\frac{a_{+,+}b_{+,+}}{n^2}\Big)\yestag\label{eq:summand2}.
\end{align*}
Furthermore, we have
\begin{align*}
&\sum_{i\ne j}\frac{a_{i,j}(b_{i,+}+b_{+,j})+(a_{i,+}+a_{+,j})b_{i,j}}{n-1}
 \xlongequal{\eqref{eq:huoeq3}}
 \frac{4}{n-1}\sum_{i=1}^{n}a_{i,+}b_{i,+},\\
&\sum_{i\ne j}\frac{(a_{i,+}+a_{+,j})(b_{i,+}+b_{+,j})}{(n-1)^2}
 \xlongequal{\eqref{eq:huoeq5}}
 \frac{1}{(n-1)^2}\Big\{2(n-2)\sum_{i=1}^{n}a_{i,+}b_{i,+}+2a_{+,+}b_{+,+}\Big\},\\
&\sum_{i\ne j}\frac{a_{i,j}b_{+,+}+a_{+,+}b_{i,j}}{n(n-1)}
 \xlongequal{\eqref{eq:huoeq1.5}}
 \frac{2a_{+,+}b_{+,+}}{n(n-1)},\\
&\sum_{i\ne j}\frac{(a_{i,+}+a_{+,j})b_{+,+}+a_{+,+}(b_{i,+}+b_{+,j})}{n(n-1)^2}
 \xlongequal{\eqref{eq:huoeq2}}
 \frac{4a_{+,+}b_{+,+}}{n(n-1)},\\
&\sum_{i\ne j}\frac{a_{+,+}b_{+,+}}{n^2(n-1)^2}=\frac{a_{+,+}b_{+,+}}{n(n-1)},\\
\text{and}~~~&\sum_{i=1}^{n}\frac{a_{i,+}b_{+,+}+a_{+,+}b_{i,+}}{n}=\frac{2a_{+,+}b_{+,+}}{n},~~~
 \sum_{i=1}^{n}\frac{a_{+,+}b_{+,+}}{n^2}=\frac{a_{+,+}b_{+,+}}{n}.
\end{align*}
Plugging all these equalities above into \eqref{eq:summand1} and \eqref{eq:summand2} completes the proof. 
 
The equivalence between \eqref{eq:def2014} and \eqref{eq:defnew} is an immediate consequence of Lemma~1 in \citet{MR3798874}, which shows that \eqref{eq:def2014} is equivalent to
\begin{equation}\label{eq:def2018}
\dCov^2_n\Big([\mX_{i}]_{i=1}^{n},[\mY_{i}]_{i=1}^{n}\Big):=
\binom{n}{4}^{-1}
\sum_{1\le i_1<\dots<i_4\le n}K'\Big((\mX_{i_1},\mY_{i_1}),\dots,(\mX_{i_4},\mY_{i_4})\Big),
\end{equation}
where
\begin{align*}
&K'\Big((\mx_{1},\my_{1}),\dots,(\mx_{4},\my_{4})\Big)\\
:=\;&\frac{1}{4!}\sum_{[i_1,\dots,i_4]\in\rP(\zahl{4})} \lVert\mx_{i_1}-\mx_{i_2}\rVert \Big(\lVert\my_{i_1}-\my_{i_2}\rVert+\lVert\my_{i_3}-\my_{i_4}\rVert-2\lVert\my_{i_1}-\my_{i_3}\rVert\Big).
\end{align*}
By expanding the above summation, one obtains that \eqref{eq:def2018} is equivalent to 
\begin{equation}\label{eq:def2018sym}
\dCov^2_n\Big([\mX_{i}]_{i=1}^{n},[\mY_{i}]_{i=1}^{n}\Big):=
\binom{n}{4}^{-1}
\sum_{1\le i_1<\dots<i_4\le n}K''\Big((\mX_{i_1},\mY_{i_1}),\dots,(\mX_{i_4},\mY_{i_4})\Big),
\end{equation}
where
\begin{align*}
&K''\Big((\mx_{1},\my_{1}),\dots,(\mx_{4},\my_{4})\Big)\\
:=\;&\frac{1}{4!}\sum_{[i_1,\dots,i_4]\in\rP(\zahl{4})} \lVert\mx_{i_1}-\mx_{i_2}\rVert \Big(\lVert\my_{i_1}-\my_{i_2}\rVert+\lVert\my_{i_3}-\my_{i_4}\rVert-\lVert\my_{i_1}-\my_{i_3}\rVert-\lVert\my_{i_2}-\my_{i_4}\rVert\Big).
\end{align*}
Next, by expanding the summation again, we have \eqref{eq:def2018sym} is equivalent to \eqref{eq:defnew}.

Definition~5.3 (U-statistic) in \citet{jakobsen2017distance} can be written as
\begin{equation}\label{eq:def2017}
\dCov^2_n\Big([\mX_{i}]_{i=1}^{n},[\mY_{i}]_{i=1}^{n}\Big):=\mbinom{n}{6}^{-1}
\sum_{1\le i_1<\dots<i_6\le n}K^*\Big((\mX_{i_1},\mY_{i_1}),\dots,(\mX_{i_6},\mY_{i_6})\Big),
\end{equation}
where
\begin{align*}
K^*\Big((\mx_{1},\my_{1}),\dots,(\mx_{6},\my_{6})\Big)
&:=\frac{1}{6!}\sum_{[i_1,\dots,i_6]\in\rP(\zahl{6})} s(\mx_{i_1},\mx_{i_2},\mx_{i_3},\mx_{i_4})s(\my_{i_1},\my_{i_2},\my_{i_5},\my_{i_6}),\\
\text{and recall}~~~
s(\mt_{1},\mt_{2},\mt_{3},\mt_{4})&:=\lVert\mt_{1}-\mt_{2}\rVert+\lVert \mt_{3}-\mt_{4}\rVert-\lVert \mt_{1}-\mt_{3}\rVert-\lVert \mt_{2}-\mt_{4}\rVert.
\end{align*}
The equivalence between \eqref{eq:def2017} and \eqref{eq:defnew} can be verified by expanding the summation as well. 
\end{proof}

\subsubsection{Proof of Theorem \ref{thm:null}}

\begin{proof}[Proof of Theorem \ref{thm:null}]
This theorem is a corollary of Theorem \ref{cor:general}, 
which we prove in Section~\ref{subsec:general}. 
In our context,
$p_1=p$, $p_2=q$, $m=4$, and $h$ is the kernel $K$ defined in
\eqref{eq:kernel}.  The multisets $\{\mz^{(n)}_{1;j},j\in\zahl{n}\}$
and $\{\mz^{(n)}_{2;j},j\in\zahl{n}\}$ are taken to be
$\{\bmu^{(n)}_{j},j\in\zahl{n}\}:=\cG^{p}_{n_0,n_R,\bm{n_S}}$ and
$\{\mv^{(n)}_{j},j\in\zahl{n}\}:=\cG^{q}_{n_0,n_R,\bm{n_S}}$,
respectively.  Accordingly, $\mZ^{(n)}_{1}$ follows the uniform discrete
distribution over $\cG^{p}_{n_0,n_R,\bm{n_S}}$, denoted by
$\mU^{(n)}$, and $\mZ^{(n)}_{2}$ has a uniform discrete distribution over
$\cG^{q}_{n_0,n_R,\bm{n_S}}$, denoted by $\mV^{(n)}$.  The functions
$g^{(n)}_1$, $g_1$, $g^{(n)}_2$, and $g_2$ %, as shown below,
can be chosen as $-d_{\mU^{(n)}}$, $-d_{\mU}$, $-d_{\mV^{(n)}}$, and
$-d_{\mV}$, defined in the manner of \eqref{eq:fproj}, respectively.
Recall that
\begin{align*}
d_{\mU^{(n)}}(\bmu,\bmu')&:=
\lVert\bmu-\bmu'\rVert - \E\lVert\bmu-\mU^{(n)}_{2}\rVert
- \E\lVert\mU^{(n)}_{1}-\bmu'\rVert + \E\lVert\mU^{(n)}_{1}-\mU^{(n)}_{2}\rVert,\\
\text{and}~~~
d_{\mU}(\bmu,\bmu')&:=
\lVert\bmu-\bmu'\rVert - \E\lVert\bmu-\mU_{2}\rVert
- \E\lVert\mU_{1}-\bmu'\rVert + \E\lVert\mU_{1}-\mU_{2}\rVert,\yestag\label{eq:equidef}
\end{align*}
with their analogues $d_{\mV^{(n)}}(\mv,\mv')$ and $d_{\mV}(\mv,\mv')$. Here $\mU^{(n)}_{1}$ and $\mU^{(n)}_{2}$ are independent with law $\P_{\mU^{(n)}}$, 
and $\mU_{1}$ and $\mU_{2}$ are independent with law $\P_{\mU}$. 

We verify the conditions in Theorem~\ref{cor:general} as follows.
Proposition \ref{prop:unif} shows that 
$\mU^{(n)}$ and $\mV^{(n)}$ converge in distribution to $\mU$ and $\mV$, respectively. 
We also have that
(I) the kernel $K$ is symmetric and continuous on $\overline\bS_p\times\overline\bS_q$, and thus $\lVert K\rVert_{\infty}<\infty$; 
(II) $K_{1}(\mw;\P_{\mU^{(n)}}\times \P_{\mV^{(n)}}) = 0$; 
(III)
\begin{align*}
6K_{2}\Big(\mw,\mw';\P_{\mU^{(n)}}\times \P_{\mV^{(n)}}\Big)
&= \Big(-d_{\mU^{(n)}}(\bmu,\bmu')\Big)\Big(-d_{\mV^{(n)}}(\mv,\mv')\Big),\\
%K_{1}\Big(\mw;\P_{\mU}\times \P_{\mV}\Big)
%= 0
\text{and}~~~
6K_{2}\Big(\mw,\mw';\P_{\mU}\times \P_{\mV}\Big)
&= \Big(-d_{\mU}(\bmu,\bmu')\Big)\Big(-d_{\mV}(\mv,\mv')\Big),
\end{align*}
by \citet[Sec.~1.1]{MR3798874supp}.

Next we verify Assumptions~\ref{asm:sym2}--\ref{asm:unf} for $-d_{\mU^{(n)}}(\bmu,\bmu')$ and $-d_{\mU}(\bmu,\bmu')$.
It can be easily seen that
$-d_{\mU^{(n)}}(\bmu,\bmu')$ is symmetric (Assumption~\ref{asm:sym2}),
and has $\E[-d_{\mU^{(n)}}(\bmu,\mU^{(n)})]=0$ (Assumption~\ref{asm:dgn2}) 
and $\E[\{d_{\mU^{(n)}}(\mU^{(n)},\mU^{(n)}_{*})\}^2]\in(0,+\infty)$ (Assumption~\ref{asm:fnt2}) by \citet[Theorem~4(i)]{MR2382665}.
\citet[p.~3291]{MR3127883} has proved that
functions $-d_{\mU^{(n)}}(\bmu,\bmu')$
are non-negative definite (Assumption~\ref{asm:nnd2}). 
We have $-d_{\mU^{(n)}}(\bmu,\bmu')$ is equicontinuous (Assumption~\ref{asm:cnt2}) since
\begin{align*}
\lvert -d_{\mU^{(n)}}(\bmu,\bmu')-(-d_{\mU^{(n)}}(\bmu,\bmu''))\rvert
&\;=\Big\lvert \lVert\bmu-\bmu'\rVert-\lVert\bmu-\bmu''\rVert - 
\E\Big[\lVert\mU^{(n)}-\bmu'\rVert-\lVert\mU^{(n)}-\bmu''\rVert\Big]\Big\rvert\\
&\;\le 2\lVert\bmu'-\bmu''\rVert,
\end{align*}
and moreover, 
$\lvert -d_{\mU^{(n)}}(\bmu,\bmu')-(-d_{\mU^{(n)}}(\bmu''',\bmu''))\rvert\le 2\lVert\bmu-\bmu'''\rVert+2\lVert\bmu'-\bmu''\rVert.$
It remains to prove that $-d_{\mU^{(n)}}(\bmu,\bmu')$ converges uniformly to $-d_{\mU}(\bmu,\bmu')$ (Assumption~\ref{asm:unf}). 
Using the portmanteau Lemma \citep[Lemma~2.2]{MR1652247} 
and Proposition~\ref{prop:unif}, we have for all $\bmu,\bmu'\in\overline\bS_p$,
\begin{align*}
\E\lVert\bmu-\mU^{(n)}_{*}\rVert       \to  \E\lVert\bmu-\mU_{*}\rVert,
~~~
\E\lVert\mU^{(n)}-\bmu'\rVert          \to  \E\lVert\mU-\bmu'\rVert,\\
~~~\text{and}~~~
\E\lVert\mU^{(n)}-\mU^{(n)}_{*}\rVert  \to  \E\lVert\mU-\mU_{*}\rVert,
\end{align*}
and thus $-d_{\mU^{(n)}}(\bmu,\bmu')$ converges pointwisely to $-d_{\mU}(\bmu,\bmu')$.
Then the uniform convergence follows from
the equicontinuity of $-d_{\mU^{(n)}}(\bmu,\bmu')$ \citep[Exercise~7.16]{MR0385023}.
Assumptions~\ref{asm:sym2}--\ref{asm:unf} can be similarly verified for $-d_{\mV^{(n)}}(\mv,\mv')$ and $-d_{\mV}(\mv,\mv')$ as well.

Lastly, using Proposition~\ref{prop:distrfree}, 
$[\fF^{(n)}_{\mX,\pm}(\mX_i)]_{i=1}^{n}$ and $[\fF^{(n)}_{\mY,\pm}(\mY_i)]_{i=1}^{n}$ are 
uniformly distributed on $\rP(\cG^{p}_{n_0,n_R,\bm{n_S}})$ and $\rP(\cG^{q}_{n_0,n_R,\bm{n_S}})$, respectively. 
In addition, under $H_0$, $[\fF^{(n)}_{\mX,\pm}(\mX_i)]_{i=1}^{n}$ and $[\fF^{(n)}_{\mY,\pm}(\mY_i)]_{i=1}^{n}$ are independent. 
Hence our statistic is distributed as
\[
\widehat M_n = n\cdot\mbinom{n}{4}^{-1}\sum_{1\le j_1<\cdots< j_4\le n}
  K\Big(
  (\bmu^{(n)}_{\pi'_{j_1}},\mv^{(n)}_{\pi''_{j_1}}),\ldots,
  (\bmu^{(n)}_{\pi'_{j_4}},\mv^{(n)}_{\pi''_{j_4}})\Big),
\]
where $\mpi'$ and $\mpi''$ are uniformly distributed on $\rP(\zahl{n})$ and independent,
and thus the same as the form \eqref{eq:mips}
by defining permutation $\mpi$ for which $\pi_i=j$ subject to $\pi'_k=i$ and $\pi''_k=j$ for some $k$.
\end{proof}

\subsubsection{Proof of Theorem \ref{thm:consi}}

\begin{proof}[Proof of Theorem \ref{thm:consi}]
We begin by proving the first claim \eqref{eq:strconsi}.
Let $\mU_{i}$, $\mV_{i}$, $\mU^{(n)}_{i}$, $\mV^{(n)}_{i}$ denote 
$\fF_{\mX,\pm}(\mX_i)$, 
$\fF_{\mY,\pm}(\mY_i)$, 
$\fF_{\mX,\pm}^{(n)}(\mX_i)$, 
$\fF_{\mY,\pm}^{(n)}(\mY_i)$, 
respectively. 
Write $\mW_i:=(\mU_{i},\mV_{i})$, 
$\mW^{(n)}_{i}:=(\mU^{(n)}_{i},\mV^{(n)}_{i})$, 
and $\mw_i:=(\bmu_{i},\mv_{i})$.
The main idea here is to bound
\[\Big|\dCov^2_n([\mU^{(n)}_{i}]_{i=1}^{n},[\mV^{(n)}_{i}]_{i=1}^{n})-\dCov^2_n([\mU_{i}]_{i=1}^{n},[\mV_{i}]_{i=1}^{n})\Big|.\]
Recall that
\begin{align*}
\dCov^2_n\Big([\mU^{(n)}_{i}]_{i=1}^{n},[\mV^{(n)}_{i}]_{i=1}^{n}\Big)
&=\mbinom{n}{4}^{-1}
\sum_{1\le i_1<\dots<i_4\le n}K(\mW^{(n)}_{i_1},\mW^{(n)}_{i_2},\mW^{(n)}_{i_3},\mW^{(n)}_{i_4}),\\
\dCov^2_n\Big([\mU_{i}]_{i=1}^{n},[\mV_{i}]_{i=1}^{n}\Big)
&=\mbinom{n}{4}^{-1}
\sum_{1\le i_1<\dots<i_4\le n}K(\mW_{i_1},\mW_{i_2},\mW_{i_3},\mW_{i_4}),
\end{align*}
where %\fbox{in the following equation, where are $i_1,\ldots,i_4$ inside the sum?}
\[
K(\mw_{1},\dots,\mw_{4})
:=\frac{1}{4\cdot4!}\sum_{[i_1,\dots,i_4]\in\rP(\zahl{4})} s(\bmu_{i_1},\bmu_{i_2},\bmu_{i_3},\bmu_{i_4})s(\mv_{i_1},\mv_{i_2},\mv_{i_3},\mv_{i_4}),\yestag\label{eq:kuigang}
\]
and $s(\mt_{1},\mt_{2},\mt_{3},\mt_{4})
:=\lVert\mt_{1}-\mt_{2}\rVert+\lVert \mt_{3}-\mt_{4}\rVert-\lVert \mt_{1}-\mt_{3}\rVert-\lVert \mt_{2}-\mt_{4}\rVert.$
Using the inequality 
\begin{align*}
   \;& \Big|\Big\lVert \mU^{(n)}_{i_1}-\mU^{(n)}_{i_2}\Big\rVert\cdot\Big\lVert \mV^{(n)}_{i_3}-\mV^{(n)}_{i_4}\Big\rVert
      -\Big\lVert \mU_{i_1}-\mU_{i_2}\Big\rVert\cdot\Big\lVert \mV_{i_3}-\mV_{i_4}\Big\rVert\Big|\\
\le\;& \Big|\Big\lVert \mU^{(n)}_{i_1}-\mU^{(n)}_{i_2}\Big\rVert-\Big\lVert \mU_{i_1}-\mU_{i_2}\Big\rVert\Big|\cdot\Big\lVert \mV^{(n)}_{i_3}-\mV^{(n)}_{i_4}\Big\rVert 
      +\Big|\Big\lVert \mV^{(n)}_{i_3}-\mV^{(n)}_{i_4}\Big\rVert-\Big\lVert \mV_{i_3}-\mV_{i_4}\Big\rVert\Big|\cdot\Big\lVert \mU_{i_1}-\mU_{i_2}\Big\rVert\\
\le\;& \Big(\Big\lVert \mU^{(n)}_{i_1}-\mU_{i_1}\Big\rVert+\Big\lVert \mU^{(n)}_{i_2}-\mU_{i_2}\Big\rVert\Big)\cdot 2 
     + \Big(\Big\lVert \mV^{(n)}_{i_3}-\mV_{i_3}\Big\rVert+\Big\lVert \mV^{(n)}_{i_4}-\mV_{i_4}\Big\rVert\Big)\cdot 2\\
\le\;& 4\sup_{1\le i\le n}\Big\lVert \mU^{(n)}_{i}-\mU_{i}\Big\rVert
      +4\sup_{1\le i\le n}\Big\lVert \mV^{(n)}_{i}-\mV_{i}\Big\rVert,
\end{align*}
where $i_1,i_2,i_3,i_4$ could be duplicate, we deduce from \eqref{eq:kuigang} that
\begin{equation}\label{eq:16sup}
|K(\mW^{(n)}_{i_1},\dots,\mW^{(n)}_{i_4})-K(\mW_{i_1},\dots,\mW_{i_4})|\le 
16\Big(\sup_{1\le i\le n}\Big\lVert \mU^{(n)}_{i}-\mU_{i}\Big\rVert
      +\sup_{1\le i\le n}\Big\lVert \mV^{(n)}_{i}-\mV_{i}\Big\rVert\Big).
\end{equation}
This implies %\fbox{should we use $[U_i]_{i=1}^{n}$ instead of $()$ inside $\dCov()$? happening at multiple places}
\begin{align*}
   \;& \Big|\dCov^2_n\Big([\mU^{(n)}_{i}]_{i=1}^{n},[\mV^{(n)}_{i}]_{i=1}^{n}\Big)-\dCov^2_n\Big([\mU_{i}]_{i=1}^{n},[\mV_{i}]_{i=1}^{n}\Big)\Big|\\
\le\;& 16\Big(\sup_{1\le i\le n}\Big\lVert \mU^{(n)}_{i}-\mU_{i}\Big\rVert
      +\sup_{1\le i\le n}\Big\lVert \mV^{(n)}_{i}-\mV_{i}\Big\rVert\Big).\yestag\label{eq:ineconsi}
\end{align*}
Applying Proposition \ref{prop:GC} (Glivenko--Cantelli) to \eqref{eq:ineconsi} yields that
\begin{equation}
\Big|\dCov^2_n\Big([\mU^{(n)}_{i}]_{i=1}^{n},[\mV^{(n)}_{i}]_{i=1}^{n}\Big)
    -\dCov^2_n\Big([\mU_{i}]_{i=1}^{n},[\mV_{i}]_{i=1}^{n}\Big)\Big|
\stackrel{\sf a.s.}{\longrightarrow} 0.
\end{equation}
This together with 
\[
\dCov^2_n\Big([\mU_{i}]_{i=1}^{n},[\mV_{i}]_{i=1}^{n}\Big)
\stackrel{\sf a.s.}{\longrightarrow}
\dCov^2\Big(\fF_{\mX,\pm}(\mX),\fF_{\mY,\pm}(\mY)\Big),
\]
the strong consistency of $\dCov^2_n([\mU_{i}]_{i=1}^{n},[\mV_{i}]_{i=1}^{n})$ 
\citep[Theorem~5.5]{jakobsen2017distance}, yields %\eqref{eq:strconsi}.
\[
\widehat M_n/n=
\dCov^2_n\Big([\mU^{(n)}_{i}]_{i=1}^{n},[\mV^{(n)}_{i}]_{i=1}^{n}\Big)
\stackrel{\sf a.s.}{\longrightarrow}
\dCov^2\Big(\fF_{\mX,\pm}(\mX),\fF_{\mY,\pm}(\mY)\Big).
\]

Next we prove the second claim. It has been proved by \citet[Theorem~3(i)]{MR2382665}~that 
$\dCov^2(\fF_{\mX,\pm}(\mX),\fF_{\mY,\pm}(\mY))\ge 0$ and equality holds if and only if $\fF_{\mX,\pm}(\mX)$ and $\fF_{\mY,\pm}(\mY)$ are independent. It remains to show that (a) the independence of $\fF_{\mX,\pm}(\mX)$ and $\fF_{\mY,\pm}(\mY)$, is equivalent to (b) the independence of $\mX$ and $\mY$. 
It is obvious that (b) implies (a). Then we prove (a)~implies~(b).
For any Borel sets $B_1\subseteq\R^p$ and $B_2\subseteq\R^q$, using Proposition~\ref{prop:Figalli}(ii) and Definition~\ref{def:nonvanish}, we deduce
\begin{align*}
\Pr(\mX\in B_1, \mY\in B_2)=\;& \Pr(\mX\in B_1, \mY\in B_2)-\Pr(\mX\in\fF_{\mX,\pm}^{-1}(\bm{0}))-\Pr(\mY\in\fF_{\mY,\pm}^{-1}(\bm{0}))\\
\le\;& \Pr(\mX\in B_1\backslash \fF_{\mX,\pm}^{-1}(\bm{0}), \mY\in B_2\backslash \fF_{\mY,\pm}^{-1}(\bm{0}))
\le  \Pr(\mX\in B_1, \mY\in B_2),
\end{align*}
and thus 
\begin{equation}\label{eq:bigone}
\Pr(\mX\in B_1, \mY\in B_2)=\Pr(\mX\in B_1\backslash \fF_{\mX,\pm}^{-1}(\bm{0}), \mY\in B_2\backslash \fF_{\mY,\pm}^{-1}(\bm{0})).
\end{equation}
We can similarly obtain 
\begin{equation}\label{eq:smalltwo}
\Pr(\mX\in B_1)=\Pr(\mX\in B_1\backslash \fF_{\mX,\pm}^{-1}(\bm{0}))
~~~\text{and}~~~ 
\Pr(\mY\in B_2)=\Pr(\mY\in B_2\backslash \fF_{\mY,\pm}^{-1}(\bm{0})).
\end{equation}
It follows that 
\begin{align*}
\;&\Pr(\mX\in B_1, \mY\in B_2)
\xlongequal{\eqref{eq:bigone}} \Pr(\mX\in B_1\backslash \fF_{\mX,\pm}^{-1}(\bm{0}), \mY\in B_2\backslash \fF_{\mY,\pm}^{-1}(\bm{0}))\\[-.3em]
\xlongequal{\text{Prop.~\ref{prop:Figalli}(ii)}} \;& \Pr\{\fF_{\mX,\pm}(\mX)\in \fF_{\mX,\pm}(B_1\backslash \fF_{\mX,\pm}^{-1}(\bm{0})), \fF_{\mY,\pm}(\mY)\in \fF_{\mX,\pm}(B_2\backslash \fF_{\mY,\pm}^{-1}(\bm{0}))\}\\[-.3em]
\xlongequal{\fF_{\mX,\pm}(\mX) {{\rotatebox[origin=c]{90}{\footnotesize$\models$}}}\fF_{\mY,\pm}(\mY)} \;& \Pr\{\fF_{\mX,\pm}(\mX)\in \fF_{\mX,\pm}(B_1\backslash \fF_{\mX,\pm}^{-1}(\bm{0}))\}\cdot\Pr\{\fF_{\mY,\pm}(\mY)\in \fF_{\mX,\pm}(B_2\backslash \fF_{\mY,\pm}^{-1}(\bm{0}))\}\\[-.3em]
\xlongequal{\text{Prop.~\ref{prop:Figalli}(ii)}} \;& \Pr(\mX\in B_1\backslash \fF_{\mX,\pm}^{-1}(\bm{0}))\cdot\Pr(\mY\in B_2\backslash \fF_{\mY,\pm}^{-1}(\bm{0}))
\xlongequal{\eqref{eq:smalltwo}} \Pr(\mX\in B_1)\cdot\Pr(\mY\in B_2).
\end{align*}

Finally, under any fixed alternative $H_1$, combining the above two
claims yields that
\[\widehat M_n/n\stackrel{\sf a.s.}{\longrightarrow}\dCov^2\Big(\fF_{\mX,\pm}(\mX),\fF_{\mY,\pm}(\mY)\Big)>0\] 
as $n\to\infty$ and \eqref{eq:key} holds.  Thus, $\widehat
M_n\stackrel{\sf a.s.}{\longrightarrow}\infty$ and
\eqref{eq:asymp-alter} follows by noticing that $Q_{1-\alpha}$ is a constant with respect to $n$, and depends only on $p$ and $q$. 
\end{proof}

\subsection{Proofs for Section \ref{sec:theory} of the main paper}

\subsubsection{Proof of Theorem \ref{thm:general}}

%
%\fbox{As one assumption is used in the proof,  state explicitly at that place "by Assumption XXX" or sth similar}
%
%\fbox{Use $Z$ but not $U$}
%
We first state the following properties of the limiting functions:
\begin{lemma}\label{lem:property}
The limiting functions $g_i$, $i=1,2$, satisfy:
%\begin{enumerate}[label=(\roman*'),itemsep=-.5ex]
%%\item\label{asm:cmp1} 
%\item\label{asm:sym1} $g_i$ is symmetric, i.e., $g_i(\mz,\mz')=g_i(\mz',\mz)$ for all $\mz,\mz'\in\Omega_i$;
%\item\label{asm:cnt1} $g_i$ is continuous;
%\item\label{asm:nnd1} $g_i$ is non-negative definite, that is,
%\[\sum_{j_1,j_2=1}^{n}c_{j_1}c_{j_2}g_i(\mz_{j_1},\mz_{j_2})\ge 0\]
%for all $c_1,\dots,c_n\in\R$, $\mz_1,\dots,\mz_{n}\in\Omega_i$, $n\in\Z_+$.
%\item\label{asm:dgn1} $\E(g_i(\mz,\mZ'_{i}))=0$
%\item\label{asm:fnt1} $\E(g_i(\mZ_{i},\mZ'_{i})^2)\in(0,+\infty)$.
%\end{enumerate}
\begin{description}[labelwidth=.3in,itemsep=-.5ex,font=\normalfont]
\item[\namedlabel{asm:sym1}{(i')}]   $g_i$ is symmetric, i.e., $g_i(\mz,\mz')=g_i(\mz',\mz)$ for all $\mz,\mz'\in\Omega_i$;
\item[\namedlabel{asm:cnt1}{(ii')}]  $g_i$ is continuous;
\item[\namedlabel{asm:nnd1}{(iii')}] $g_i$ is non-negative definite;
\item[\namedlabel{asm:dgn1}{(iv')}]  $\E(g_i(\mz,\mZ_{i}))=0$;
\item[\namedlabel{asm:fnt1}{(v')}]   $\E(g_i(\mZ_{i},\mZ'_{i})^2)\in(0,+\infty)$.
\end{description}
\end{lemma}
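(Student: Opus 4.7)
The plan is to transfer each property from the prenlimit functions $g^{(n)}_i$ (Assumptions~\ref{asm:sym2}--\ref{asm:fnt2}) to the limit $g_i$ via the uniform convergence $g^{(n)}_i \to g_i$ in Assumption~\ref{asm:unf}, combined with the weak convergence $\mZ^{(n)}_i \stackrel{\sf d}{\longrightarrow} \mZ_i$ on the compact set $\Omega_i$. Note that property \ref{asm:fnt1} is given directly as part of Assumption~\ref{asm:unf}, so nothing needs to be proved for it.

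For \ref{asm:sym1}, I will simply observe that since $g^{(n)}_i(\mz,\mz')=g^{(n)}_i(\mz',\mz)$ for every $n$, the pointwise limit inherits symmetry. For \ref{asm:cnt1}, since $\{g^{(n)}_i\}$ is equicontinuous (Assumption~\ref{asm:cnt2}) and converges uniformly to $g_i$ on the compact set $\Omega_i\times\Omega_i$, the limit $g_i$ is continuous (equivalently, each $g^{(n)}_i$ is continuous on a compact set, and the uniform limit of continuous functions is continuous). For \ref{asm:nnd1}, I will pass the inequality
\[
\sum_{j_1,j_2=1}^{\ell} c_{j_1}c_{j_2}\, g^{(n)}_i(\mz_{j_1},\mz_{j_2}) \ge 0
\]
to the limit $n\to\infty$, which is legitimate because the left-hand side is a finite sum of pointwise convergent terms.

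The one step requiring a little care is \ref{asm:dgn1}, where I need to combine two distinct limits. For any fixed $\mz\in\Omega_i$, I will write the decomposition
\[
\bigl|\E(g_i(\mz,\mZ_i))\bigr|
\le \bigl|\E(g_i(\mz,\mZ_i)) - \E(g_i(\mz,\mZ^{(n)}_i))\bigr|
  + \bigl|\E(g_i(\mz,\mZ^{(n)}_i)) - \E(g^{(n)}_i(\mz,\mZ^{(n)}_i))\bigr|,
\]
since the second inner term equals $0$ by Assumption~\ref{asm:dgn2}. The first piece vanishes by the Portmanteau theorem, because $g_i(\mz,\cdot)$ is bounded and continuous on the compact set $\Omega_i$ (using \ref{asm:cnt1}) and $\mZ^{(n)}_i\stackrel{\sf d}{\longrightarrow}\mZ_i$; the second piece is bounded by $\lVert g^{(n)}_i - g_i\rVert_\infty\to 0$ by Assumption~\ref{asm:unf}.

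The main obstacle, if any, is the interchange of limits in \ref{asm:dgn1}: one must be a bit careful that uniform convergence of $g^{(n)}_i$ to $g_i$ together with weak convergence of the laws $\Pr_{\mZ^{(n)}_i}$ to $\Pr_{\mZ_i}$ suffices, and this is exactly what compactness of $\Omega_i$ (hence boundedness of $g_i$) and \ref{asm:cnt1} deliver. Everything else is a routine pass-to-the-limit argument.
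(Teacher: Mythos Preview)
Your proposal is correct and mirrors the paper's own proof essentially line by line: symmetry and non-negative definiteness pass to the pointwise limit, continuity follows from equicontinuity plus uniform convergence, and for \ref{asm:dgn1} the paper uses exactly your two-term split (Portmanteau for $\E g_i(\mz,\mZ^{(n)}_i)\to\E g_i(\mz,\mZ_i)$, and $\lVert g^{(n)}_i-g_i\rVert_\infty\to0$ for the other piece). The only cosmetic difference is that for \ref{asm:fnt1} the paper additionally remarks that finiteness would follow anyway from compactness of $\Omega_i$ and continuity of $g_i$, but as you note the full statement $\E(g_i(\mZ_i,\mZ'_i)^2)\in(0,+\infty)$ is already part of Assumption~\ref{asm:unf}.
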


\begin{proof}[Proof of Lemmma \ref{lem:property}]
% equi2 + pntwse  =  unf
%  sym2 + unf     =  sym1  (easy)
%  cnt2 + unf     =  cnt1  (Rudin 1976, Theorem 7.12)
%  nnd2 + unf     =  nnd1  (easy)
%  dgn2 + unf     =  dgn1  (see Lemma A.1)
%  fnt2 + unf     =  fnt1  (Portmanteau + unif)
Given Assumption~\ref{asm:unf}, 
Properties~\ref{asm:sym1} and \ref{asm:nnd1} readily follow from Assumptions~\ref{asm:sym2} and \ref{asm:nnd2}, respectively.
Property~\ref{asm:cnt1} follows from Assumptions~\ref{asm:cnt2} and \ref{asm:unf} by Theorem~7.12 in \citet{MR0385023}.
Property~\ref{asm:dgn1} holds by noticing 
$\E(g_i(\mz,\mZ^{(n)}_{i}))\to\E(g_i(\mz,\mZ_{i}))$ 
by Property~\ref{asm:cnt1} and the portmanteau lemma \citep[Lemma~2.2]{MR1652247}, and 
\begin{align*}
  \lvert \E g_{i}(\mz_{i},\mZ^{(n)}_{i})\rvert
&=\lvert \E g^{(n)}_{i}(\mz_{i},\mZ^{(n)}_{i})- \E g_{i}(\mz_{i},\mZ^{(n)}_{i})\rvert
\\
&\le\E \lvert g^{(n)}_{i}(\mz_{i},\mZ^{(n)}_{i})- g_{i}(\mz_{i},\mZ^{(n)}_{i})\rvert
\le \lVert g^{(n)}_{i}-g_{i}\rVert_\infty\to0,
\end{align*}
where the first step is by Assumption~\ref{asm:dgn2},  
and the last step is due to Assumption~\ref{asm:unf}.
For Property~\ref{asm:fnt1}, $\E(g_i(\mZ_{i},\mZ'_{i})^2)>0$ has been assumed in Property~\ref{asm:unf}, 
and $\E(g_i(\mZ_{i},\mZ'_{i})^2)<\infty$ since $\Omega_i$ is compact and Property~\ref{asm:cnt1}.
\end{proof}

\begin{proof}[Proof of Theorem \ref{thm:general}]
%For $i=1,2$, let $\mZ^{(n)}_{i}$ denote the random variable uniformly distributed on  
%$\{\mz^{(n)}_{i;j},j\in\zahl{n}\}$.
The proof is divided into two steps. 
The first step consists of 
defining a ``truncated'' version $\widehat D^{(n)}_{K}$ of $\widehat D^{(n)}$
and finding the limiting distribution of $\widehat D^{(n)}_{K}$.
The second step is to 
bound the difference between $\widehat D^{(n)}_{K}$ and $\widehat D^{(n)}$ 
and then derive the limiting distribution of $\widehat D^{(n)}$. 
To this end, we do some preliminary work. 
Using the Hilbert--Schmidt theorem \citep[Theorem~3.2.1, Example~3.1.15]{MR3364494},  
$g^{(n)}_{i}$ admits the following eigenfunction expansion 
by Assumptions~\ref{asm:sym2}~and~\ref{asm:fnt2},  %\fbox{why is there a prime?}
\[
g^{(n)}_i(\mz,\mz')=\sum_{k=1}^{\infty}\lambda^{(n)}_{i,k}e^{(n)}_{i,k}(\mz)e_{i,k}^{(n)}(\mz'),
\]
where $\lambda^{(n)}_{i,k},~k\in\Z_+$ are all the non-zero eigenvalues of the integral equation
\[
\E (g_i(\mz,\mZ^{(n)}_{i}) e^{(n)}_{i,k}(\mZ^{(n)}_{i}))=\lambda_{i,k}^{(n)} e^{(n)}_{i,k}(\mz)
\]
with $\lambda^{(n)}_{i,1}\ge \lambda^{(n)}_{i,2}\ge \lambda^{(n)}_{i,3}\ge \cdots >0$ 
by Assumption~\ref{asm:nnd2}, 
and orthonormal eigenfunctions $e^{(n)}_{i,k}(\mz),~k\in\Z_+$ are  such that
\begin{equation}\label{eq:ortho}
\E (e^{(n)}_{i,k}(\mZ^{(n)}_{i})e^{(n)}_{i,k'}(\mZ^{(n)}_{i}))=\ind(k=k').
\end{equation}
Since the constant function $1$ is an eigenfunction associated with eigenvalue $0$ 
by Assumption~\ref{asm:dgn2}, %$\E g^{(n)}_{i}(\mz,\mZ^{(n)}_{i})=0$, 
%\eqref{eq:ortho} implies 
% using Hilbert--Schmidt theorem \citep[Theorem~3.2.1]{MR3364494} again yields
using the orthogonality between $e^{(n)}_{i,k}$ and the constant function $1$ \citep[Theorem~3.2.1]{MR3364494} yields
\begin{equation}\label{eq:meanzero}
\E e^{(n)}_{i,k}(\mZ^{(n)}_{i})=0.
\end{equation}
We also define $\lambda_{i,k},~k\in\Z_+$ as all the non-zero eigenvalues of the integral equation 
$
\E g_i(\mz,\mZ_{i}) e_{i,k}(\mZ_{i})=\lambda_{i,k} e_{i,k}(\mz)
$
with $\lambda_{i,1}\ge \lambda_{i,2}\ge \lambda_{i,3}\ge \cdots >0$ 
by Property~\ref{asm:nnd1}, 
and orthonormal eigenfunctions $e_{i,k}(\mz),~k\in\Z_+$ are  such that
$
\E e_{i,k}(\mZ_{i})e_{i,k'}(\mZ_{i})=\ind(k=k').
$
Denote $\mk:=[k_1,k_2]$, 
$\gamma^{(n)}_{\mk}:=\lambda^{(n)}_{1,k_1}\lambda^{(n)}_{2,k_2}$, 
%$\phi^{(n)}_{\mk}(\mz_1,\mz_2):=e^{(n)}_{1,k_1}(\mz_1) e^{(n)}_{2,k_2}(\mz_2)$,
and $\Phi^{(n)}_{\mk}(j_1,j_2):=%\phi^{(n)}_{\mk}(\mz^{(n)}_{1;j_1},\mz^{(n)}_{2;j_2})
e^{(n)}_{1,k_1}(\mz^{(n)}_{1;j_1}) e^{(n)}_{2,k_2}(\mz^{(n)}_{2;j_2})$.

{\bf Step I. }
%by Lemma 4.2 in \citet{MR3541972}, 
By Theorem~4.11.8 in \citet{MR3408971}, we may write
\[
\widehat D^{(n)}=
\frac{1}{n(n-1)}\sum_{j_1\ne j_2}
  \sum_{k_1,k_2=1}^{\infty}
  \gamma^{(n)}_{\mk}
  \Phi^{(n)}_{\mk}(j_1,\pi_{j_1})
  \Phi^{(n)}_{\mk}(j_2,\pi_{j_2}).
\]
%\fbox{$\Phi()$ is a function; we should always use $f(\cdot)$ not $f[\cdot]$ to denote functions.} 
%
%\fbox{You may use a larger parenthesis to differentiate} 
For each integer $K$, we define the ``truncated'' permutation statistic
\[
\widehat D^{(n)}_{K}:=
\frac{1}{n(n-1)}\sum_{j_1\ne j_2}
  \sum_{k_1,k_2=1}^{K}
  \gamma^{(n)}_{\mk}
  \Phi^{(n)}_{\mk}(j_1,\pi_{j_1})
  \Phi^{(n)}_{\mk}(j_2,\pi_{j_2}),
\]
and derive the limiting distribution of $n\widehat D^{(n)}_{K}$ as $n\to\infty$.
Notice that  $n\widehat D^{(n)}_{K}$ can be written as
\[
n\widehat D^{(n)}_{K}=\frac{n}{n-1}\Big\{\sum_{k_1,k_2=1}^{K}\gamma^{(n)}_{\mk} \Big(\sum_{j=1}^n \frac{\Phi^{(n)}_{\mk}(j,\pi_{j})}{\sqrt{n}}\Big)^2
-\sum_{k_1,k_2=1}^{K}\gamma^{(n)}_{\mk} \Big(\frac{\sum_{j=1}^n \{\Phi^{(n)}_{\mk}(j,\pi_{j})\}^2}{n}\Big)\Big\}.\yestag\label{eq:sqsum}
\]

We separately study the two terms on the right-hand side of \eqref{eq:sqsum}, starting from the first term. 
We first establish that, for any fixed $K\in\Z_+$, the random vector 
\[
\bm\Xi^{(n)}_{K^2}:=\Big(
\sum_{j=1}^n \frac{\Phi^{(n)}_{[1,1]}(j,\pi_{j})}{\sqrt{n}},\dots,
\sum_{j=1}^n \frac{\Phi^{(n)}_{[1,K]}(j,\pi_{j})}{\sqrt{n}},\dots,
\sum_{j=1}^n \frac{\Phi^{(n)}_{[K,1]}(j,\pi_{j})}{\sqrt{n}},\dots,
\sum_{j=1}^n \frac{\Phi^{(n)}_{[K,K]}(j,\pi_{j})}{\sqrt{n}}
\Big)^{\top}
\]
has a mean of $\bm{0}$ 
and a variance-covariance matrix of $\frac{n}{n-1}\fI_{K^2}$. 
We have for $\mk=[k_1,k_2]\in\zahl{K}\times\zahl{K}$,
\begin{align*}
   &\E\sum_{j=1}^n\Phi^{(n)}_{\mk}(j,\pi_{j})
    =\frac1{n}\sum_{j_1=1}^{n}\sum_{j_2=1}^{n}\Phi^{(n)}_{\mk}(j_1,j_2)\\
=\;&\frac1{n}\sum_{j_1=1}^{n}e^{(n)}_{1,k_1}(\mz^{(n)}_{1;j_1})
             \sum_{j_2=1}^{n}e^{(n)}_{2,k_2}(\mz^{(n)}_{2;j_2})
    =n\E[e^{(n)}_{1,k_1}(\mZ^{(n)}_{1})]
      \E[e^{(n)}_{2,k_2}(\mZ^{(n)}_{2})]
    =0,\yestag\label{eq:compmean}
\end{align*}
where the last step uses \eqref{eq:meanzero}.
For $\mk=[k_1,k_2]$ and $\mk'=[k'_1,k'_2]\in\zahl{K}\times\zahl{K}$, it holds that
\begin{align*}
   &\E\Big[\sum_{j_1=1}^{n}\Phi^{(n)}_{\mk} (j_1,\pi_{j_1})
           \sum_{j_3=1}^{n}\Phi^{(n)}_{\mk'}(j_3,\pi_{j_3})\Big]\\
=\;&\E\Big[\sum_{j_1=1}^{n}\Phi^{(n)}_{\mk} (j_1,\pi_{j_1})
                           \Phi^{(n)}_{\mk'}(j_1,\pi_{j_1})
         +\sum_{j_1\ne j_3}\Phi^{(n)}_{\mk} (j_1,\pi_{j_1})
                           \Phi^{(n)}_{\mk'}(j_3,\pi_{j_3})\Big]\\
=\;&\frac{1}{n}\sum_{j_1,j_2=1}^{n}\Phi^{(n)}_{\mk} (j_1,j_2)
                             \Phi^{(n)}_{\mk'}(j_1,j_2)\\
 \;&+\frac{1}{n(n-1)}\Big(\sum_{j_1,j_2,j_3,j_4=1}^{n}
                           \Phi^{(n)}_{\mk} (j_1,j_2)
                           \Phi^{(n)}_{\mk'}(j_3,j_4)
    -\sum_{j_1,j_2,j_4=1}^{n}\Phi^{(n)}_{\mk} (j_1,j_2)
                             \Phi^{(n)}_{\mk'}(j_1,j_4)\\
 \;&\qquad\qquad\qquad
    -\sum_{j_1,j_2,j_3=1}^{n}\Phi^{(n)}_{\mk }(j_1,j_2)
                             \Phi^{(n)}_{\mk'}(j_3,j_2)
        +\sum_{j_1,j_2=1}^{n}\Phi^{(n)}_{\mk} (j_1,j_2)
                             \Phi^{(n)}_{\mk'}(j_1,j_2)\Big).\yestag\label{eq:compE}
\end{align*}
Moreover, we deduce from \eqref{eq:compE} and \eqref{eq:compmean} that
\begin{align*}
   &\Cov\Big(\sum_{j_1=1}^{n}\Phi^{(n)}_{\mk }(j_1,\pi_{j_1}),
             \sum_{j_3=1}^{n}\Phi^{(n)}_{\mk'}(j_3,\pi_{j_3})\Big)\\
=\;&\E\Big[\sum_{j_1=1}^{n}\Phi^{(n)}_{\mk }(j_1,\pi_{j_1})
           \sum_{j_3=1}^{n}\Phi^{(n)}_{\mk'}(j_3,\pi_{j_3})\Big]
     -\Big(\E\sum_{j_1=1}^{n}\Phi^{(n)}_{\mk }(j_1,\pi_{j_1})\Big)
      \Big(\E\sum_{j_3=1}^{n}\Phi^{(n)}_{\mk'}(j_3,\pi_{j_3})\Big)\\
=\;&\frac{n^2}{n-1}\Big(
         \frac1{n^2}\sum_{j_1,j_2=1}^{n}\Phi^{(n)}_{\mk }(j_1,j_2)
                                        \Phi^{(n)}_{\mk'}(j_1,j_2)
    -\frac1{n^3}\sum_{j_1,j_2,j_3=1}^{n}\Phi^{(n)}_{\mk }(j_1,j_2)
                                        \Phi^{(n)}_{\mk'}(j_3,j_2)\\
 \;&\mkern35mu-\frac1{n^3}\sum_{j_1,j_2,j_4=1}^{n}
                                        \Phi^{(n)}_{\mk }(j_1,j_2)
                                        \Phi^{(n)}_{\mk'}(j_1,j_4)
+\frac1{n^4}\sum_{j_1,j_2,j_3,j_4=1}^{n}\Phi^{(n)}_{\mk }(j_1,j_2)
                                        \Phi^{(n)}_{\mk'}(j_3,j_4)\Big)\\
%=\;&\frac{n^2}{n-1}\Big\{
%     \Big(\frac1{n}\sum_{j_1=1}^{n}e^{(n)}_{1,k_1 }(\mz^{(n)}_{1;j_1})
%                                   e^{(n)}_{1,k_1'}(\mz^{(n)}_{1;j_1})\Big)
%     \Big(\frac1{n}\sum_{j_2=1}^{n}e^{(n)}_{2,k_2 }(\mz^{(n)}_{2;j_2})
%                                   e^{(n)}_{2,k_2'}(\mz^{(n)}_{2;j_2})\Big)\\
% \;&\mkern36mu-\Big(
%          \frac1{n}\sum_{j_1=1}^{n}e^{(n)}_{1,k_1 }(\mz^{(n)}_{1;j_1})\Big)
%     \Big(\frac1{n}\sum_{j_3=1}^{n}e^{(n)}_{1,k_1'}(\mz^{(n)}_{1;j_3})\Big)
%     \Big(\frac1{n}\sum_{j_2=1}^{n}e^{(n)}_{2,k_2 }(\mz^{(n)}_{2;j_2})
%                                   e^{(n)}_{2,k_2'}(\mz^{(n)}_{2;j_2})\Big)\\
% \;&\mkern36mu-\Big(
%          \frac1{n}\sum_{j_1=1}^{n}e^{(n)}_{1,k_1 }(\mz^{(n)}_{1;j_1})
%                                   e^{(n)}_{1,k_1'}(\mz^{(n)}_{1;j_1})\Big)
%     \Big(\frac1{n}\sum_{j_2=1}^{n}e^{(n)}_{2,k_2 }(\mz^{(n)}_{2;j_2})\Big)
%     \Big(\frac1{n}\sum_{j_4=1}^{n}e^{(n)}_{2,k_2'}(\mz^{(n)}_{2;j_4})\Big)\\
% \;&\mkern36mu+\Big(
%          \frac1{n}\sum_{j_1=1}^{n}e^{(n)}_{1,k_1 }(\mz^{(n)}_{1;j_1})\Big)
%     \Big(\frac1{n}\sum_{j_3=1}^{n}e^{(n)}_{1,k_1'}(\mz^{(n)}_{1;j_3})\Big)
%     \Big(\frac1{n}\sum_{j_2=1}^{n}e^{(n)}_{2,k_2 }(\mz^{(n)}_{2;j_2})\Big)
%     \Big(\frac1{n}\sum_{j_4=1}^{n}e^{(n)}_{2,k_2'}(\mz^{(n)}_{2;j_4})\Big)\Big\}\\
=\;&\frac{n^2}{n-1}
 \Big\{\frac1{n}\sum_{j_1=1}^{n}e^{(n)}_{1,k_1 }(\mz^{(n)}_{1;j_1})
                                e^{(n)}_{1,k_1'}(\mz^{(n)}_{1;j_1})
 -\Big(\frac1{n}\sum_{j_1=1}^{n}e^{(n)}_{1,k_1 }(\mz^{(n)}_{1;j_1})\Big)
  \Big(\frac1{n}\sum_{j_3=1}^{n}e^{(n)}_{1,k_1'}(\mz^{(n)}_{1;j_3})\Big)\Big\}\\
&\mkern46mu\Big\{\frac1{n}\sum_{j_2=1}^{n}e^{(n)}_{2,k_2}(\mz^{(n)}_{2;j_2})e^{(n)}_{2,k_2'}(\mz^{(n)}_{2;j_2})
 -\Big(\frac1{n}\sum_{j_2=1}^{n}e^{(n)}_{2,k_2}(\mz^{(n)}_{2;j_2})\Big)
  \Big(\frac1{n}\sum_{j_4=1}^{n}e^{(n)}_{2,k_2'}(\mz^{(n)}_{2;j_4})\Big)\Big\}\\
=\;&\frac{n^2}{n-1}\Cov\Big(e^{(n)}_{1,k_1}(\mZ^{(n)}_{1}),
                            e^{(n)}_{1,k_1'}(\mZ^{(n)}_{1})\Big)
                   \Cov\Big(e^{(n)}_{2,k_2}(\mZ^{(n)}_{2}),
                            e^{(n)}_{2,k_2'}(\mZ^{(n)}_{2})\Big)\\
=\;&\frac{n^2}{n-1}\ind(k_1=k'_1)\ind(k_2=k'_2)
=   \frac{n^2}{n-1}\ind(\mk=\mk'),
%=\;&\begin{cases}\dfrac{n^2}{n-1}, & \text{ if }\mk=\mk',\\ 0, & \text{ otherwise},\end{cases}
\yestag\label{eq:compcov}
\end{align*}
where the penultimate step uses \eqref{eq:ortho} and \eqref{eq:meanzero}.
Combining \eqref{eq:compmean} and \eqref{eq:compcov} confirms the claim that 
the mean and the variance-covariance matrix of $\sqrt{(n-1)/n}\bm\Xi^{(n)}_{K^2}$ 
are $\bm{0}$ and $\fI_{K^2}$, respectively.

This claim about $\sqrt{(n-1)/n}\bm\Xi^{(n)}_{K^2}$ allows us to 
use the multivariate Berry--Ess{\'e}en theorem for permutation statistics
\citep[Theorem~1]{MR1245764}.
Specifically, we present the version revised by \citet[p.~3]{raic15multivariate}.
Define $\bm\Xi_{K^2}$ as a standard $K^2$-dimensional Gaussian random vector
with independent univariate standard Gaussian entries
\[
\bm\Xi_{K^2}=(
 \xi_{[1,1]},\dots,\xi_{[1,K]},\dots,
 \xi_{[K,1]},\dots,\xi_{[K,K]})^{\top},
\]
and $\cH$ as the family of all measurable convex sets in $\R^{K^2}$.
We obtain that for all $H\in\cH$, there exists a universal constant $c_1$ such that
\begin{align*}
   \;&\Big\lvert\P\Big(\sqrt{\frac{n-1}{n}}\bm\Xi^{(n)}_{K^2}\in H\Big)-\P(\bm\Xi_{K^2}\in H)\Big\rvert\\
\le\;& c_1(K^2)^{1/4}\frac{1}{n}\sum_{j_1,j_2=1}^{n}\Big(\sum_{k_1,k_2=1}^{K}\Big\{\sqrt{\frac{n-1}{n^2}}
  \Phi^{(n)}_{\mk}(j_1,j_2)\Big\}^2\Big)^{3/2}\\
\le\;& c_1K^{1/2}\frac{1}{n^{5/2}}\sum_{j_1,j_2=1}^{n}\Big(\sum_{k_1,k_2=1}^{K}
  \Big\{e^{(n)}_{1,k_1}(\mz^{(n)}_{1;j_1})\Big\}^2
  \Big\{e^{(n)}_{2,k_2}(\mz^{(n)}_{2;j_2})\Big\}^2\Big)^{3/2}
  = O(n^{-1/2}),\yestag\label{eq:mltbe}
\end{align*}
where the last step is due to the facts that $K$ is fixed and that
$\sup_{n}\lVert e^{(n)}_{i,k}\rVert_{\infty}<\infty$ for each $i=1,2$ and any fixed $k$,
as we will show in Lemma~\ref{lem:aux1}(b).
%
%\fbox{which step will be shown later? The first inequality? The first equality? The second equality?}
%
%\fbox{suggestion: create lemmas. The lemma may just state as "(A.10) is true". And then you can cite it here.}
%
%Cram\'er--Wold theorem \fbox{citation} guarantees that 
Notice that for any $a_1,\dots,a_{K^2}\in\R$, the set $(-\infty,a_1]\times\cdots\times(-\infty,a_{K^2}]$ is a convex subset of $\R^{K^2}$.
It follows that
$\sqrt{(n-1)/n}\bm\Xi^{(n)}_{K^2}\stackrel{\sf d}{\longrightarrow}\bm\Xi_{K^2}$,
and thus, $\bm\Xi^{(n)}_{K^2}\stackrel{\sf d}{\longrightarrow}\bm\Xi_{K^2}$
by Slutsky's theorem \citep[Theorem~2.8]{MR1652247}. 
On the other hand,  
since
\begin{equation}\label{eq:gammaconver}
\gamma^{(n)}_{\mk}=\lambda^{(n)}_{1,k_1}\lambda^{(n)}_{2,k_2}\to\lambda_{1,k_1}\lambda_{2,k_2}=\gamma_{\mk}
\end{equation}
by Lemma~\ref{lem:aux1}(a), we have $\bm\Gamma^{(n)}_{K^2}\to\bm\Gamma_{K^2}$ where
\begin{align*}
\bm\Gamma^{(n)}_{K^2}&:=(
 \gamma^{(n)}_{[1,1]},\dots,\gamma^{(n)}_{[1,K]},\dots,
 \gamma^{(n)}_{[K,1]},\dots,\gamma^{(n)}_{[K,K]})^{\top},\\
\text{and}~~~
\bm\Gamma_{K^2}&:=(
 \gamma_{[1,1]},\dots,\gamma_{[1,K]},\dots,
 \gamma_{[K,1]},\dots,\gamma_{[K,K]})^{\top}.
\end{align*}
%
%\fbox{where did you prove this? You need a lemma here}
%
%\fbox{this lemma may just state as "$\gamma^{(n)}_{\mk}$ converges to $\gamma_{\mk}$ uniformly for $\mk\in\zahl{K}\times\zahl{K}$"}
%
We find using the generalized Slutsky's theorem (as a consequence of Theorem~2.7 in \citealp[p.10--11]{MR1652247}) that
\begin{align*}
\sum_{k_1,k_2=1}^{K}\gamma^{(n)}_{\mk} \Big(\sum_{j=1}^n \frac{\Phi^{(n)}_{\mk}(j,\pi_{j})}{\sqrt{n}}\Big)^2
=\;&\bm\Gamma^{(n)}_{K^2}\cdot\Big(\bm\Xi^{(n)}_{K^2}\circ\bm\Xi^{(n)}_{K^2}\Big)\\
\stackrel{\sf d}{\longrightarrow}\;&
\bm\Gamma_{K^2}\cdot\Big(\bm\Xi_{K^2}\circ\bm\Xi_{K^2}\Big)
=\sum_{k_1,k_2=1}^{K}\gamma_{\mk}\xi_{\mk}^2,\yestag\label{eq:term1}
\end{align*}
recognizing the function $f(\mx,\my)=\mx\cdot(\my\circ\my)$ for $\mx,\my\in\R^{K^2}$ as continuous.
This completes the analysis of the first term in \eqref{eq:sqsum}.
%
%\fbox{give me a detailed derivation on how you prove (A.6) converges to $\sum_{k=1}^{K}\lambda_kZ_k^2$; this is the crucial step} 
%
%that the first term on the right-hand side of \eqref{eq:sqsum} converges in distribution to $\sum_{k=1}^{K}\lambda_kZ_k^2$. 
%

We turn to the second term in \eqref{eq:sqsum}. 
Denoting $n^{-1}\sum_{j=1}^n \{\Phi^{(n)}_{\mk}(j,\pi_{j})\}^2$ by $T^{(n)}_{\mk}$, 
we have by Theorem~2 in \citet{MR0044058},
\begin{align*}
\E[T^{(n)}_{\mk}]
&=\frac1{n^2}\sum_{j_1=1}^{n} \Big\{e^{(n)}_{1,k_1}(\mz^{(n)}_{1;j_1})\Big\}^2
             \sum_{j_2=1}^{n} \Big\{e^{(n)}_{2,k_2}(\mz^{(n)}_{2;j_2})\Big\}^2=1,\yestag\\
\Var(T^{(n)}_{\mk})
&=\frac{1}{n-1}
 \Big(\frac{\sum_{j_1=1}^n[\{e^{(n)}_{1,k_1}(\mz^{(n)}_{1;j_1})\}^2-1]^2}{n}\Big)
 \Big(\frac{\sum_{j_2=1}^n[\{e^{(n)}_{2,k_2}(\mz^{(n)}_{2;j_2})\}^2-1]^2}{n}\Big)
 =O(n^{-1}),\yestag\label{eq:hoeffvar}
\end{align*}
where the last step in \eqref{eq:hoeffvar} uses Lemma~\ref{lem:aux1}(b).
Therefore,  
%for the second term in \eqref{eq:sqsum}, 
we have 
\begin{align*}
\E\Big[\sum_{k_1,k_2=1}^{K}\gamma^{(n)}_{\mk} T^{(n)}_{\mk}\Big]
&=\sum_{k_1,k_2=1}^{K}\gamma^{(n)}_{\mk} \E[T^{(n)}_{\mk}]
 =\sum_{k_1,k_2=1}^{K}\gamma^{(n)}_{\mk},\yestag\label{eq:pptt}\\
\text{and}~~~
\Var\Big(\sum_{k_1,k_2=1}^{K}\gamma^{(n)}_{\mk} T^{(n)}_{\mk}\Big)
&\le\Big(\sum_{k_1,k_2=1}^{K}\gamma^{(n)}_{\mk} \sqrt{\Var(T^{(n)}_{\mk})}\Big)^2=O(n^{-1}),\yestag\label{eq:Mink}
\end{align*}
where the first step in \eqref{eq:Mink} applies Minkowski's inequality 
%\citep[Exercise.~1.5.3]{MR2722836}. 
\citep[p.~242]{MR1324786}
and the last step is based on \eqref{eq:gammaconver} and \eqref{eq:hoeffvar}. 
By \citet[Exercise~4.3.5]{DeGroot2012Pas}, it follows
that 
\begin{multline*}
\E\Big[\Big(\sum_{k_1,k_2=1}^{K}\gamma^{(n)}_{\mk} T^{(n)}_{\mk}-\sum_{k_1,k_2=1}^{K}\gamma_{\mk}\Big)^2\Big]
=\Big(\E\Big[\sum_{k_1,k_2=1}^{K}\gamma^{(n)}_{\mk} T^{(n)}_{\mk}\Big]-\sum_{k_1,k_2=1}^{K}\gamma_{\mk}\Big)^2+\Var\Big(\sum_{k_1,k_2=1}^{K}\gamma^{(n)}_{\mk} T^{(n)}_{\mk}\Big)\\
=\Big(\sum_{k_1,k_2=1}^{K}(\gamma^{(n)}_{\mk}-\gamma_{\mk})\Big)^2+\Var\Big(\sum_{k_1,k_2=1}^{K}\gamma^{(n)}_{\mk} T^{(n)}_{\mk}\Big)
=o(1).\yestag
\end{multline*}
Here the second last step uses \eqref{eq:pptt}, 
and the last step is based on \eqref{eq:gammaconver} and \eqref{eq:Mink}.
Hence for the second term in \eqref{eq:sqsum}, we have
\begin{equation}\label{eq:term2}
\sum_{k_1,k_2=1}^{K}\gamma^{(n)}_{\mk} T^{(n)}_{\mk}\stackrel{\sf p}{\longrightarrow}\sum_{k_1,k_2=1}^{K}\gamma_{\mk}.
\end{equation}
%
%\fbox{write more.}
%
Putting the two pieces \eqref{eq:term1} and \eqref{eq:term2} together, % --- two parts in \eqref{eq:sqsum}, 
and using Slutsky's theorem once again, we have
\begin{equation}\label{eq:crucial}
n\widehat D^{(n)}_{K}\stackrel{\sf d}{\longrightarrow}\sum_{k_1,k_2=1}^{K}\gamma_{\mk}(\xi_{\mk}^2-1).
\end{equation}
%This completes the study of \eqref{eq:sqsum}.
This completes Step I.

{\bf Step II. }
We will prove 
$n\widehat D^{(n)}\stackrel{\sf d}{\longrightarrow}\sum_{\mk}\gamma_{\mk}(\xi_{\mk}^2-1)$ 
starting from \eqref{eq:crucial}.
Following arguments of \citet[Chap.~5.5.2]{MR595165}, 
%\citet[Chap.~3.2.2]{MR1075417}, 
we first control $\E|n\widehat D^{(n)}-n\widehat D^{(n)}_{K}|^2$.
Letting 
\[S^{(n)}_{\mk}:=\sum_{j_1\ne j_2}
\Phi^{(n)}_{\mk}(j_1,\pi_{j_1})
\Phi^{(n)}_{\mk}(j_2,\pi_{j_2})
=\sum_{j_1\ne j_2}
e^{(n)}_{1,k_1}(\mz_{1;j_1})
e^{(n)}_{1,k_1}(\mz_{1;j_2})
e^{(n)}_{2,k_2}(\mz_{2;\pi_{j_1}})
e^{(n)}_{2,k_2}(\mz_{2;\pi_{j_2}}),\]
we have
\[
n\widehat D^{(n)}-n\widehat D^{(n)}_{K}
%=\frac{1}{n-1}\sum_{j_1\ne j_2}
%  \sum_{\mk\not\in \zahl{K}\times\zahl{K}}
%  \gamma^{(n)}_{\mk}
%  \Phi^{(n)}_{\mk}(j_1,\pi_{j_1})
%  \Phi^{(n)}_{\mk}(j_2,\pi_{j_2})
=\frac{1}{n-1}
  \sum_{\mk\not\in \zahl{K}\times\zahl{K}}
  \gamma^{(n)}_{\mk}S^{(n)}_{\mk}.\yestag
\]
Equations (2.2)--(2.3) in \citet{MR857081} give
\begin{align*}
\E[S^{(n)}_{\mk}]
=\;&n(n-1)\mu^{(n)}_{1,k_1}\mu^{(n)}_{2,k_2}
=n(n-1)
\Big(-\frac{1}{n-1}\Big)
\Big(-\frac{1}{n-1}\Big)=\frac{n}{n-1},\yestag\label{eq:barbE}\\
\text{and}~~~
\Var(S^{(n)}_{\mk})
=\;&\frac{4n^2(n-2)^2}{(n-1)}
\Big(\frac{\sum_{j_1=1}^{n}\{\zeta^{(n)}_{1,k_1;j_1}/(n-2)\}^2}{n}\Big)
\Big(\frac{\sum_{j_1=1}^{n}\{\zeta^{(n)}_{2,k_2;j_1}/(n-2)\}^2}{n}\Big)\\
\;&+\frac{2n(n-1)^2}{n-3}
\Big(\frac{\sum_{j_1\ne j_2}\{\eta^{(n)}_{1,k_1;j_1,j_2}\}^2}{n(n-1)}\Big)
\Big(\frac{\sum_{j_1\ne j_2}\{\eta^{(n)}_{2,k_2;j_1,j_2}\}^2}{n(n-1)}\Big),\yestag\label{eq:barbVar-half}
%\Cov(S_{n,k},S_{n,k'})
%=\;&\frac{4n^2}{(n-1)(n-2)^2}\Big(\frac{\sum_{i=1}^{n} e_{n,k_1,i}^{*}e_{n,k_1',i}^{*}}{n}\Big)\Big(\frac{\sum_{i=1}^{n} f_{n,k_2,i}^{*}f_{n,k_2',i}^{*}}{n}\Big)\\
%\;&+\frac{2n(n-1)^2}{n-3}\Big(\frac{\sum_{i\ne j} e_{n,k_1;i,j}^{\#}e_{n,k_1';i,j}^{\#}}{n(n-1)}\Big)\Big(\frac{\sum_{i\ne j} f_{n,k_2;i,j}^{\#}f_{n,k_2';i,j}^{\#}}{n(n-1)}\Big),\yestag\label{eq:barbCov}
\end{align*}
where for $i=1,2$,
\begin{align*}
\mu^{(n)}_{i,k}
:=\;&
\frac{1}{n(n-1)}\sum_{j_1\ne j_2}
e^{(n)}_{i,k}(\mz_{i;j_1})
e^{(n)}_{i,k}(\mz_{i;j_2})
=
-\frac{1}{n(n-1)}\sum_{j_1=1}^{n}
\{e^{(n)}_{i,k}(\mz_{i;j_1})\}^2
=-\frac{1}{n-1},\\
\zeta^{(n)}_{i,k;j_1}
:=\;&
\sum_{j_2:j_2\ne j_1}\Big\{
e^{(n)}_{i,k}(\mz^{(n)}_{i;j_1})
e^{(n)}_{i,k}(\mz^{(n)}_{i;j_2})-\mu^{(n)}_{i,k}\Big\}
=-\{e^{(n)}_{i,k}(\mz^{(n)}_{i;j_1})\}^2+1,\\
\text{and}~~~
\eta^{(n)}_{i,k;j_1,j_2}
:=\;&
e^{(n)}_{i,k}(\mz^{(n)}_{i;j_1})
e^{(n)}_{i,k}(\mz^{(n)}_{i;j_2})
-\frac{\zeta_{i,k;j_1}}{n-2}
-\frac{\zeta_{i,k;j_2}}{n-2}-\mu^{(n)}_{i,k}\\
=\;&e^{(n)}_{i,k}(\mz^{(n)}_{i;j_1})
e^{(n)}_{i,k}(\mz^{(n)}_{i;j_2})
+\frac{\{e^{(n)}_{i,k}(\mz^{(n)}_{i;j_1})\}^2-1}{n-2}
+\frac{\{e^{(n)}_{i,k}(\mz^{(n)}_{i;j_2})\}^2-1}{n-2}
+\frac{1}{n-1}.
\end{align*}
To further bound \eqref{eq:barbVar-half}, we apply the following inequalities for $i=1,2$:
\begin{align*}
\sum_{j_1=1}^{n}\{\zeta^{(n)}_{i,k;j_1}\}^2
&=\sum_{j_1=1}^{n}(1-\zeta^{(n)}_{i,k;j_1})^2-n
 =\sum_{j_1=1}^{n}\Big(n-\sum_{j_2:j_2\ne j_1}\{e^{(n)}_{i,k}(\mz^{(n)}_{i;j_2})\}^2\Big)
                  \Big(\{e^{(n)}_{i,k}(\mz^{(n)}_{i;j_1})\}^2\Big)-n\\
&\le n\sum_{j_1=1}^{n}\{e^{(n)}_{i,k}(\mz^{(n)}_{i;j_1})\}^2-n
 = n(n-1),\\
\text{and}~~~
\sum_{j_1\ne j_2}\{\eta^{(n)}_{i,k;j_1,j_2}\}^2
%&=n(n-1)-\frac{n}{n-1}-\frac{n}{n-2}\Big(\sum_{j_1=1}^{n}\{e^{(n)}_{i,k}(\mz^{(n)}_{i;j_1})\}^4-n\Big)\\
&=n(n-1)-\frac{n}{n-1}-\frac{n}{n-2}\sum_{j_1=1}^{n}\Big(\{e^{(n)}_{i,k}(\mz^{(n)}_{i;j_1})\}^2-1\Big)^2\le n(n-1)-\frac{n}{n-1},
\end{align*}
Using the inequalities we deduce that for all $\mk\in\Z_+\times\Z_+$,
\begin{equation}
\Var(S^{(n)}_{\mk})\le\frac{4n^2(n-1)}{(n-2)^2}+\frac{2n(n-1)^2}{n-3}.\label{eq:barbVar}
\end{equation}
Combining \eqref{eq:barbE} and \eqref{eq:barbVar}, we find that for $n\ge 14$,
\begin{align*}
 \;&\E|n\widehat D^{(n)}-n\widehat D^{(n)}_{K}|^2
%=\frac{1}{(n-1)^2}\E\Big[\Big(\sum_{\mk\not\in \zahl{K}\times\zahl{K}}\gamma^{(n)}_{\mk}S^{(n)}_{\mk}\Big)^2\Big]\\
=\frac{1}{(n-1)^2}\Big[\Big(\E\sum_{\mk\not\in \zahl{K}\times\zahl{K}}\gamma^{(n)}_{\mk} S^{(n)}_{\mk}\Big)^2
%+\sum_{\mk\not\in \zahl{K}\times\zahl{K}}\{\gamma^{(n)}_{\mk}\}^2\Var(S^{(n)}_{\mk})
%+\sum_{\substack{\mk\ne\mk'}}\gamma^{(n)}_{\mk}\gamma^{(n)}_{\mk'}\Cov(S^{(n)}_{\mk},S^{(n)}_{\mk'})\Big]\\
+\Var\Big(\sum_{\mk\not\in \zahl{K}\times\zahl{K}}\gamma^{(n)}_{\mk}S^{(n)}_{\mk}\Big)\Big]\\
\le\;&\frac{1}{(n-1)^2}\Big[\Big(\sum_{\mk\not\in \zahl{K}\times\zahl{K}}\gamma^{(n)}_{\mk}\E S^{(n)}_{\mk}\Big)^2
+\Big(\sum_{\mk\not\in \zahl{K}\times\zahl{K}}\gamma^{(n)}_{\mk}\sqrt{\Var(S^{(n)}_{\mk})}\Big)^2\Big]\\
\le\;&\frac{1}{(n-1)^2}\Big[\Big(\frac{n}{n-1}\Big)^2+\frac{4n^2(n-1)}{(n-2)^2}+\frac{2n(n-1)^2}{n-3}\Big]\Big(\sum_{\mk\not\in \zahl{K}\times\zahl{K}}\gamma^{(n)}_{\mk}\Big)^2
\le   3\Big(\sum_{\mk\not\in \zahl{K}\times\zahl{K}}\gamma^{(n)}_{\mk}\Big)^2\\
\le\;&9\Big[\Big(\sum_{\mk\not\in \zahl{K}\times\zahl{K}}\gamma_{\mk}\Big)^2
+\Big(\sum_{\mk\in \zahl{K}\times\zahl{K}}(\gamma^{(n)}_{\mk}-\gamma_{\mk})\Big)^2
+\Big(\sum_{\mk\in\Z_+\times\Z_+}\gamma^{(n)}_{\mk}-\sum_{\mk\in\Z_+\times\Z_+}\gamma_{\mk}\Big)^2\Big]\yestag\label{eq:analog-serfling}.
\end{align*}

We next verify that $\E|n\widehat D^{(n)}-n\widehat D^{(n)}_{K}|^2$
can be made arbitrarily small for all $K$ large enough and all $n\ge N(K)$ with $N(K)$ possibly depending on $K$.
Fix any small $\epsilon>0$. 
The first term in \eqref{eq:analog-serfling} is smaller than $\epsilon/3$ 
as long as $K$ is large enough, since
\[\sum_{\mk\in\Z_+\times\Z_+}\gamma_{\mk}
=\E g_1(\mZ_{1},\mZ_{1}) \cdot \E g_2(\mZ_{2},\mZ_{2})<\infty\]
by Properties~\ref{asm:sym1}--\ref{asm:nnd1} and Mercer's theorem \citep[Theorem~3.11.9(b)]{MR3364494}. 
In view of \eqref{eq:gammaconver},
the second term in \eqref{eq:analog-serfling} 
will be smaller than $\epsilon/3$ 
for each fixed $K$ and all $n\ge N(K)$, 
where $N(K)$ may depend on $K$.
For the third term, combining the facts that $\E g_{i}(\mZ^{(n)}_{i},\mZ^{(n)}_{i})\to \E g_{i}(\mZ_{i},\mZ_{i})$ by the portmanteau lemma \citep[Lemma~2.2]{MR1652247}, and that
\[\E \lvert g^{(n)}_{i}(\mZ^{(n)}_{i},\mZ^{(n)}_{i})- g_{i}(\mZ^{(n)}_{i},\mZ^{(n)}_{i})\rvert \le \lVert g^{(n)}_{i}-g_{i}\rVert_\infty\to 0,\]
by Assumption~\ref{asm:unf}, we deduce for $i=1,2$, that $\E g^{(n)}_{i}(\mZ^{(n)}_{i},\mZ^{(n)}_{i})\to\E g_{i}(\mZ_{i},\mZ_{i})$ as $n\to\infty$.
Recalling Assumptions~\ref{asm:sym2}--\ref{asm:nnd2} 
and Properties~\ref{asm:sym1}--\ref{asm:nnd1}, %\fbox{check!!!}
it holds by Mercer's theorem once again \citep[Theorem~3.11.9(b)]{MR3364494} that 
\begin{align*}
   &\Big(\sum_{\mk\in\Z_+\times\Z_+}\gamma^{(n)}_{\mk}
        -\sum_{\mk\in\Z_+\times\Z_+}\gamma_{\mk}\Big)^2\\
=\;&\Big(\E g^{(n)}_1(\mZ^{(n)}_{1},\mZ^{(n)}_{1})
    \cdot\E g^{(n)}_2(\mZ^{(n)}_{2},\mZ^{(n)}_{2})
        -\E g_1(\mZ_{1},\mZ_{1})
    \cdot\E g_2(\mZ_{2},\mZ_{2})\Big)^2,
\end{align*}
which is smaller than $\epsilon/3$ for $n$ large enough. 
Adding these three terms together yields the result.

We are now ready to prove $n\widehat D^{(n)}\stackrel{\sf d}{\longrightarrow}\sum_{\mk}\gamma_{\mk}(\xi_{\mk}^2-1)$ using L\'{e}vy's continuity theorem \citep[Theorem~26.3]{MR1324786}. We have
\begin{align*}
   \;&\Big\lvert\E\Big[\exp\Big(\sfi tn\widehat D^{(n)}\Big)\Big]-
\E\Big[\exp\Big(\sfi t\sum_{\mk}\gamma_{\mk}(\xi_{\mk}^2-1)\Big)\Big]\Big\rvert\\
\le\;&\Big\lvert\E\Big[\exp\Big(\sfi tn\widehat D^{(n)}\Big)\Big]-
       \E\Big[\exp\Big(\sfi tn\widehat D^{(n)}_{K}\Big)\Big]\Big\rvert\\
  \;&+
\Big\lvert\E\Big[\exp\Big(\sfi tn\widehat D^{(n)}_{K}\Big)\Big]-
\E\Big[\exp\Big(\sfi t\sum_{\mk\in \zahl{K}\times\zahl{K}}\gamma_{\mk}(\xi_{\mk}^2-1)\Big)\Big]\Big\rvert\\
   \;&+\Big\lvert\E\Big[\exp\Big(\sfi t\sum_{\mk\in \zahl{K}\times\zahl{K}}\gamma_{\mk}(\xi_{\mk}^2-1)\Big)\Big]-
\E\Big[\exp\Big(\sfi t\sum_{\mk}\gamma_{\mk}(\xi_{\mk}^2-1)\Big)\Big]\Big\rvert\\
\le\;&|t|\Big(\E|n\widehat D^{(n)}-n\widehat D^{(n)}_{K}|^2\Big)^{1/2}+\Big\lvert\E\Big[\exp\Big(\sfi tn\widehat D^{(n)}_{K}\Big)\Big]-
\E\Big[\exp\Big(\sfi t\sum_{\mk\in \zahl{K}\times\zahl{K}}\gamma_{\mk}(\xi_{\mk}^2-1)\Big)\Big]\Big\rvert\\
   \;&+|t|\Big(2\sum_{\mk\not\in \zahl{K}\times\zahl{K}}\gamma^2_{\mk}\Big)^{1/2}=:I_{n,K}+I\!I_{n,K}+I\!I\!I_{K}.
\yestag\label{eq:serfling199}
\end{align*}
In the last inequality, the first term arises from the bound 
$|\E[e^{\sfi tX}]-\E[e^{\sfi tY}]|\le |t|(\E|X-Y|^2)^{1/2}$,  
and the last term is due to Equation~(4.3.10) in \citet{MR1472486}. 
%derivations in \citet[p.~197--199]{MR595165}. 
Fix $t$, and let arbitrarily small $\epsilon>0$ be given. We have
proven that there exists $K_1$ such that for all $K\ge K_1$ and all
$n\ge N(K)$, where $N(K)$ may depend on $K$, it holds that $I_{n,K}<\epsilon/3$. We can find $K_2$ such that $I\!I\!I_{K}<\epsilon/3$ for all $K\ge K_2$ because 
$\sum_{\mk}\gamma^2_{\mk}=\E[\{g_1(\mZ_{1},\mZ'_{1})\}^2]\E[\{g_2(\mZ_{2},\mZ'_{2})\}^2]<\infty$ 
by Property~\ref{asm:fnt1}. 
Taking $K_0=\max(K_1,K_2)$, we can choose $N_0\ge N(K_0)$ so that $I\!I_{n,K_0}<\epsilon/3$ for all $n\ge N_0$ since \eqref{eq:crucial} holds for $K_0$. Then for all $n\ge N_0$,
\[\Big\lvert\E\Big[\exp\Big(\sfi tn\widehat D^{(n)}\Big)\Big]-
\E\Big[\exp\Big(\sfi t\sum_{\mk}\gamma_{\mk}(\xi_{\mk}^2-1)\Big)\Big]\Big\rvert\le
I_{n,K_0}+I\!I_{n,K_0}+I\!I\!I_{K_0}<\epsilon,\]
and the proof of the theorem is complete.
\end{proof}

%It remains to control \eqref{eq:mltbe}, \eqref{eq:hoeffvar}, and the second term in \eqref{eq:analog-serfling}. 
%
%\fbox{the place to put your lemmas}
%
\begin{lemma}\label{lem:aux1}
%
%\fbox{the statement of the lemma needs not to be self-complete} XXX 
%
For each $i=1,2$ and any fixed $k\in\Z_+$, we have
\begin{enumerate*}
\item[(a)] $\lambda^{(n)}_{i,k}\to\lambda_{i,k}$ as $n\to\infty$;
\item[(b)] $\sup_{n}\lVert e^{(n)}_{i,k}\rVert_{\infty}<\infty$.
\end{enumerate*}
\end{lemma}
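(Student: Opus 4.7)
The plan is to prove part (a) first using the Courant--Fischer min-max characterization together with a Hilbert--Schmidt norm identity, and then deduce part (b) from part (a) via the eigenvalue equation.

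For the lower bound $\liminf_n \lambda^{(n)}_{i,k} \geq \lambda_{i,k}$, I would use the top $k$ eigenfunctions $e_{i,1}, \ldots, e_{i,k}$ of $g_i$ (which are continuous on the compact set $\Omega_i$ by Mercer's theorem applied to the continuous non-negative definite kernel $g_i$) as fixed test functions. Let $V^* = \mathrm{span}(e_{i,1}, \ldots, e_{i,k})$, viewed as a subspace of the continuous functions on $\Omega_i$. For any $f = \sum_{j=1}^k c_j e_{i,j}$, write $\lVert f\rVert_{L^2(P_{\mZ^{(n)}_i})}^2 = c^\top G^{(n)} c$ with $G^{(n)}_{jj'} = \int e_{i,j} e_{i,j'}\,dP_{\mZ^{(n)}_i}$, and
$\iint g^{(n)}_i(z,z') f(z) f(z')\,dP_{\mZ^{(n)}_i}(z)\,dP_{\mZ^{(n)}_i}(z') = c^\top H^{(n)} c$
with $H^{(n)}_{jj'} = \iint g^{(n)}_i(z,z')\, e_{i,j}(z) e_{i,j'}(z')\,dP_{\mZ^{(n)}_i}(z)\,dP_{\mZ^{(n)}_i}(z')$. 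By weak convergence of $P_{\mZ^{(n)}_i} \to P_{\mZ_i}$ (and hence of the product measures) together with the uniform convergence $g^{(n)}_i \to g_i$, one gets $G^{(n)} \to \fI_k$ and $H^{(n)} \to \mathrm{diag}(\lambda_{i,1}, \ldots, \lambda_{i,k})$. The Courant--Fischer formula then gives $\lambda^{(n)}_{i,k} \geq \min_{c \neq \bm 0}(c^\top H^{(n)} c)/(c^\top G^{(n)} c)$, whose limit equals $\lambda_{i,k}$.

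For the upper bound $\limsup_n \lambda^{(n)}_{i,k} \leq \lambda_{i,k}$, I would use the Hilbert--Schmidt norm identity
\[
\sum_{k=1}^{\infty}(\lambda^{(n)}_{i,k})^2 = \iint g^{(n)}_i(z,z')^2\,dP_{\mZ^{(n)}_i}(z)\,dP_{\mZ^{(n)}_i}(z').
\]
The right-hand side converges to $\iint g_i(z,z')^2\,dP_{\mZ_i}(z)\,dP_{\mZ_i}(z') = \sum_k \lambda_{i,k}^2 < \infty$, by uniform convergence of $g^{(n)}_i \to g_i$ (which, given that $g_i$ is bounded on the compact $\Omega_i^2$, implies uniform convergence of the squares) together with weak convergence of the product measures $P_{\mZ^{(n)}_i}^{\otimes 2} \to P_{\mZ_i}^{\otimes 2}$. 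Now suppose that along some subsequence $\lambda^{(n_m)}_{i,k} \to \ell > \lambda_{i,k}$. Using $\lambda^{(n)}_{i,j}\le \sqrt{\sum_{k'}(\lambda^{(n)}_{i,k'})^2}$ to guarantee boundedness, pass by diagonalization to a further subsequence along which $\lambda^{(n_m)}_{i,j} \to \ell_j$ for every $j$; the lower bound already proved gives $\ell_j \geq \lambda_{i,j}$ for all $j$. Fatou's lemma applied to the Hilbert--Schmidt norm convergence then yields $\sum_j \ell_j^2 \leq \sum_j \lambda_{i,j}^2$, which contradicts $\ell_k > \lambda_{i,k}$. Combined with the lower bound this establishes (a).

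For part (b), the eigenvalue equation and Cauchy--Schwarz, together with the normalization $\int (e^{(n)}_{i,k})^2\,dP_{\mZ^{(n)}_i} = 1$, give
\[
|e^{(n)}_{i,k}(z)| = \frac{1}{\lambda^{(n)}_{i,k}}\left|\int g^{(n)}_i(z,z') e^{(n)}_{i,k}(z')\,dP_{\mZ^{(n)}_i}(z')\right| \leq \frac{\lVert g^{(n)}_i\rVert_{\infty}}{\lambda^{(n)}_{i,k}}.
\]
Since $g^{(n)}_i \to g_i$ uniformly on the compact $\Omega_i^2$ and $g_i$ is bounded there, $\sup_n \lVert g^{(n)}_i\rVert_{\infty} < \infty$. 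By part (a), $\lambda^{(n)}_{i,k} \to \lambda_{i,k} > 0$ (recalling that the indexing enumerates the non-zero eigenvalues), so $\lambda^{(n)}_{i,k}$ is bounded away from zero for large $n$, and the claim follows. The main obstacle is the lower-bound step in part (a), where the ambient Hilbert space $L^2(P_{\mZ^{(n)}_i})$ varies with $n$; this is resolved by working with fixed continuous test functions (the eigenfunctions of the limit kernel) and exploiting weak convergence of the measures to control the relevant finite-dimensional Gram and quadratic-form matrices.
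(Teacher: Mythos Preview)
Your proof is correct and takes a genuinely different, more elementary route than the paper's.

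For part~(a), the paper invokes Anselone's theory of collectively compact operator approximations: it shows that the operators $(\sA_n f)(\mz)=\E\bigl(g^{(n)}_{i}(\mz,\mZ^{(n)}_{i})f(\mZ^{(n)}_{i})\bigr)$ on the Banach space $C(\Omega_i)$ converge pointwise to $\sA$ and form a collectively compact family, then cites Anselone's Theorems~2--3 for eigenvalue convergence. Your argument is self-contained: the lower bound $\liminf_n\lambda^{(n)}_{i,k}\ge\lambda_{i,k}$ comes from the Courant--Fischer max--min principle, using the continuous limit eigenfunctions as fixed test vectors and reducing to the convergence of finite Gram and quadratic-form matrices; the matching upper bound comes from convergence of the Hilbert--Schmidt norms together with Fatou's lemma, which forces $\ell_j=\lambda_{i,j}$ for all~$j$. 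This avoids any specialized operator-theoretic machinery.

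For part~(b), the contrast is sharper. The paper first uses Anselone's Theorem~4 to get that $e^{(n)}_{i,k}/\lVert e^{(n)}_{i,k}\rVert_\infty$ is close in sup norm to some normalized limit eigenfunction $\widetilde e_{i,k}/\lVert\widetilde e_{i,k}\rVert_\infty$, then invokes K\"onig's uniform bound on Mercer eigenfunctions, and finally carries out a careful argument to control the right-hand side of the resulting inequality, handling possible eigenvalue multiplicity via an orthonormal basis of the limiting eigenspace. Your bound $\lVert e^{(n)}_{i,k}\rVert_\infty\le\lVert g^{(n)}_i\rVert_\infty/\lambda^{(n)}_{i,k}$, obtained directly from the eigenvalue equation and Cauchy--Schwarz, sidesteps all of this. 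One small point you should make explicit: this bound is effective only for $n$ large enough that $\lambda^{(n)}_{i,k}$ is bounded away from zero; for the finitely many remaining~$n$, boundedness of $e^{(n)}_{i,k}$ follows simply from continuity on the compact~$\Omega_i$ (the paper disposes of small~$n$ in exactly the same way).

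What the paper's approach buys is eigen\emph{function} convergence as a byproduct, which is not needed for this lemma but could be useful elsewhere; what your approach buys is a proof that is shorter and does not rely on the references to Anselone and K\"onig.
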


\begin{proof}[Proof of Lemma~\ref{lem:aux1}]
We employ results in \citet{MR0220105}.
Consider the Banach space $C(\Omega_i)$ of all continuous functions $f$ on 
$\Omega_i$ equipped with the sup norm 
$\lVert f\rVert_{\infty}:=\sup_{\mz}\lvert f(\mz)\rvert$.
Define operators $\sA$ and $\sA_n$ on $C(\Omega_i)$ for each $i=1,2$ as
\[
(\sA f)(\mz):=\E (g_{i}(\mz,\mZ_{i})f(\mZ_{i}))
~~~\text{and}~~~
(\sA_n f)(\mz):=\E (g^{(n)}_{i}(\mz,\mZ^{(n)}_{i})f(\mZ^{(n)}_{i})).\yestag
\]

We first verify the three assumptions stated in \citet[Sect.~1]{MR0220105}:
\begin{description}[labelwidth=.3in,itemsep=-.5ex,font=\normalfont]
\item[(1)] $\sA$ and $\sA_n,~n\in\Z_+$ are linear operators on Banach space $C(\Omega_i)$ into itself;
\item[(2)] $\lVert \sA_n f -\sA f\rVert_{\infty}\to 0$ for each $f\in C(\Omega_i)$;
\item[(3)] $\{\sA_n,n\in\Z_+\}$ is collectively compact, i.e., the set 
\[\cB:=\Big\{\sA_n f:n\in\Z_+~~\text{and}~~\lVert f\rVert_{\infty} \le 1,~\text{for}~f\in C(\Omega_i)\Big\}\]
has compact closure. 
\end{description}
Note that Assumptions~(2) and (3) together imply that the operator $\sA$ is compact 
\citep[Chap.~1.4]{MR0443383}. 
%\citep[Corollary~3.3]{anselone1967collectively}.
Assumption~(1) is obvious by Property~\ref{asm:cnt1} and Assumption~\ref{asm:cnt2}. 
%\begin{align*}
%&\lvert g_1(\mz,\mz')\rvert\le 2\sup_{\mz_1,\mz_2\in\overline\bS_p}\lVert \mz_1-\mz_2\rVert\le 4,\\
%\text{and}~~~
%&\lvert g_1(\mz,\mz')-g_1(\mz,\mz'')\rvert=\Big\lvert \lVert\mz-\mz'\rVert-\lVert\mz-\mz''\rVert - \E\Big[\lVert\mZ-\mz'\rVert-\lVert\mZ-\mz''\rVert\Big]\Big\rvert\le 2\lVert\mz'-\mz''\rVert.
%\end{align*}
We now verify Assumption~(2).
For each fixed $f\in C(\Omega_i)$ and any fixed $\mz$, the product $g_i(\mz,\cdot)f(\cdot)$ yields a bounded and continuous function, and it follows from the portmanteau lemma \citep[Lemma~2.2]{MR1652247} that  
$\E (g_{i}(\mz,\mZ^{(n)}_{i})f(\mZ^{(n)}_{i}))\to(\sA f)(\mz)$ as $n\to\infty$. 
Since $f$ is continuous, we have $\lVert f\rVert_{\infty}<\infty$. 
We also have
\begin{align*}
      \lvert(\sA_n f)(\mz) 
      -\E (g_{i}(\mz,\mZ^{(n)}_{i})f(\mZ^{(n)}_{i}))\rvert
%  =\;&\lvert\E g^{(n)}_{i}(\mz,\mZ^{(n)}_{i})f(\mZ^{(n)}_{i})
%      -\E g_{i}(\mz,\mZ^{(n)}_{i})f(\mZ^{(n)}_{i})\rvert\\
\le\;&\lVert g^{(n)}_{i}-g_{i}\rVert_\infty\cdot\lVert f\rVert_\infty\to 0,\yestag
\end{align*}
where the last step uses Assumption~\ref{asm:unf}; hence $(\sA_n f)(\mz) \to (\sA f)(\mz)$.
Now, Assumption~(2) holds by Theorem~7.9 and Exercise~7.16 in
\citet{MR0385023} and the fact that the family of functions $\{\sA_n
f:n\in\Z_+\}$ is equicontinuous for each fixed $f\in C(\Omega_i)$,
which can be shown via the following argument. 
Given any small $\epsilon>0$, there exists $\delta>0$ such that $\lVert\mz-\mz'\rVert<\delta$ implies by Assumption~\ref{asm:cnt2} that
%since $g^{(n)}_{i}$ is (uniformly) equicontinuous on $\Omega_i$ by Heine--Cantor theorem \citep[Theorem~4.19]{MR0385023}.
$\lvert g_i^{(n)}(\mz,\mz'')-g_i^{(n)}(\mz',\mz'')\rvert<\epsilon/\lVert f\rVert_{\infty}$ for all $\mz''\in\Omega_i$, where $\lVert f\rVert_{\infty}<\infty$, and thus implies
\begin{align*}
      \lvert(\sA_n f)(\mz)-(\sA_n f)(\mz')\rvert
%  =\;&\lvert\E g^{(n)}_{i}(\mz,\mZ^{(n)}_{i})f(\mZ^{(n)}_{i})-\E g^{(n)}_{i}(\mz',\mZ^{(n)}_{i})f(\mZ^{(n)}_{i})\rvert\\
\le\;&\E\lvert g^{(n)}_{i}(\mz,\mZ^{(n)}_{i})-g^{(n)}_{i}(\mz',\mZ^{(n)}_{i})\rvert
      \cdot\lVert f\rVert_{\infty}
<     \epsilon.\yestag\label{eq:equic}
\end{align*}
For Assumption~(3), observe that the set $\cB$ is bounded and equicontinuous by \eqref{eq:equic}, and thus has compact closure by the Arzel{\`a}--Ascoli theorem %\citep[p.~276]{MR0098966}. 
\citep[Theorem~1.5.3]{MR3364494}.

To prove assertion (a) of the present lemma, we may apply
Theorems~2~and~3 in \citet{MR0220105} to obtain that for any fixed $k$, $\lambda^{(n)}_{i,k}\to\lambda_{i,k}$ as $n\to\infty$. 
%This proves the second term in \eqref{eq:analog-serfling} tends to $0$ for any fixed $K$.  
%
%\fbox{this sentence signals the proof of (A.15). But where are the similar signals for (A.10) and (A.11)?}.
%
%\fbox{suggestion: again, create separate lemmas stated as "the second part of (A.15) is true".}
%

The proof of (b) is separated into two parts. 
In the first part, we show that for each $i=1,2$ and any fixed $k$,
the $e^{(n)}_{i,k}$ are uniformly upper bounded for all sufficiently large $n$. 
Applying Theorem~4 in \citet{MR0220105} yields that, 
for any small $\epsilon>0$, 
there exists a sufficiently large $N$ such that 
for each $n\ge N$,
there exists a (not necessarily unique) eigenfunction $\widetilde e_{i,k}$ %depending on $n$ 
with
$\E (g_i(\mz,\mZ_i) \widetilde e_{i,k}(\mZ_i))=\lambda_{i,k} \widetilde e_{i,k}(\mz)$, 
$\E (\widetilde e_{i,k}(\mZ_i)^2)=1$, 
and
\[\Big\lVert \frac{e^{(n)}_{i,k}}{\lVert e^{(n)}_{i,k}\rVert_{\infty}}
-\frac{\widetilde e_{i,k}}{\lVert \widetilde e_{i,k}\rVert_{\infty}} \Big\rVert_{\infty}<\epsilon.\]
Invoking Properties~\ref{asm:sym1}--\ref{asm:nnd1},
Theorem~3.a.1 in \citet{MR889455} 
guarantees that 
%the eigenfunction $\widetilde e_{i,k}$ is continuous and 
there exists an absolute constant $C_1$ such that
$\lVert \widetilde e_{i,k}\rVert_{\infty}<C_1$ for all $k\in\Z_+$,
and therefore
\[\frac{\lvert e^{(n)}_{i,k}\rvert}{\lVert e^{(n)}_{i,k}\rVert_{\infty}}
\ge\Big(\frac{\lvert \widetilde e_{i,k}\rvert}{\lVert \widetilde e_{i,k}\rVert_{\infty}}-\epsilon\Big)_{+}
\ge\Big(\frac{\lvert \widetilde e_{i,k}\rvert}{C_1}-\epsilon\Big)_{+}.\]
This together with $\sum_{j=1}^{n}\{e^{(n)}_{i,k}(\mz^{(n)}_{i;j})\}^2/n=1$ implies that 
\begin{equation}\label{eq:band}
\lVert e^{(n)}_{i,k}\rVert_{\infty}^2 \le \Big[\frac{1}{n}\sum_{j=1}^{n}
\Big(\frac{\lvert \widetilde e_{i,k}(\mz^{(n)}_{i;j})\rvert}{C_1}-\epsilon\Big)_{+}^2\Big]^{-1}.
\end{equation}
In order to prove that the $e^{(n)}_{i,k}$ are uniformly upper bounded for 
any fixed $k\in\Z_+$ and all $n$ large enough, 
%$\sup_n\lVert e^{(n)}_{i,k}\rVert_{\infty}<\infty$ for any fixed $k\in\Z_+$, 
%(and hence to bound \eqref{eq:mltbe}), 
it suffices to control the right-hand side of \eqref{eq:band}.
Consider an orthonormal basis associated with eigenvalue $\lambda_{i,k}$:
$\{e_{i,k_1},\dots,e_{i,k_\ell}\}$, 
where $\ell$ is finite since 
\[\ell\lambda_{i,k}\le \sum_{k'=1}^{\infty}\lambda_{i,k'}=\E g_i(\mZ_i,\mZ_i)<\infty,\]
%where the second last step is 
by Properties~\ref{asm:sym1}--\ref{asm:nnd1} and Mercer's theorem \citep[Theorem~3.11.9(b)]{MR3364494}. 
Then $\widetilde e_{i,k}$ can be represented by 
\[\widetilde e_{i,k}=\sum_{v=1}^{\ell} \alpha_{v}e_{i,k_v},~~~\text{where}~\sum_{v=1}^{\ell} \alpha_{v}^2=1.\yestag\label{eq:Cauchy}\]
First, notice that there exists $N_1\ge N$ such that for all $n\ge N_1$,
\[
\Big\lvert\frac{1}{n}\sum_{j=1}^{n} e_{i,k_{v}}(\mz^{(n)}_{i;j})e_{i,k_{v'}}(\mz^{(n)}_{i;j})
-\ind(v=v')\Big\rvert<\epsilon,~~~\text{for all}~v, v'\in[\ell],\yestag\label{eq:unifconver}
\]
using the continuity of the eigenfunctions $e_{i,k_v}$, which holds by Property~\ref{asm:cnt1} and 
Corollary~2 in \citet[p.~34]{MR1864085}, 
%Mercer's theorem \citep[Theorem~3.11.9(c)]{MR3364494}, 
together with the portmanteau lemma \citep[Lemma~2.2]{MR1652247}. 
Then combining \eqref{eq:Cauchy} and \eqref{eq:unifconver}, 
it holds that for $n\ge N_1$, 
\begin{align*}
\;&\frac{1}{n}\sum_{j=1}^{n}
\Big(\frac{\lvert \widetilde e_{i,k}(\mz^{(n)}_{i;j})\rvert}{C_1}-\epsilon\Big)_{+}^2
\ge\frac{1}{C_1^2}\cdot\frac{1}{n}\sum_{j=1}^{n}\{ \widetilde e_{i,k}(\mz^{(n)}_{i;j})\}^2
-\frac{2\epsilon}{C_1}\cdot\frac{1}{n}\sum_{j=1}^{n}\lvert \widetilde e_{i,k}(\mz^{(n)}_{i;j})\rvert\\
=\;&\frac{1}{C_1^2}\cdot\sum_{v=1}^{\ell}\frac{\alpha_{v}^2}{n}\sum_{j=1}^{n}\{ e_{i,k_{v}}(\mz^{(n)}_{i;j})\}^2
+\frac{2}{C_1^2}\cdot\sum_{v<v'}\frac{\alpha_{v}\alpha_{v'}}{n}\sum_{j=1}^{n}e_{i,k_{v}}(\mz^{(n)}_{i;j})e_{i,k_{v'}}(\mz^{(n)}_{i;j})
-\frac{2\epsilon}{C_1}\cdot\frac{1}{n}\sum_{j=1}^{n}\lvert \widetilde e_{i,k}(\mz^{(n)}_{i;j})\rvert\\
\ge\;&\frac{1}{C_1^2}\cdot\sum_{v=1}^{\ell}\alpha_{v}^2(1-\epsilon)
-\frac{2}{C_1^2}\cdot\sum_{v<v'}\lvert\alpha_{v}\alpha_{v'}\rvert\epsilon
-2\epsilon
=\frac{1}{C_1^2}-\frac{\epsilon}{C_1^2}\Big(\sum_{v=1}^{\ell} \lvert\alpha_{v}\rvert\Big)^2-2\epsilon
\ge \frac{1-\epsilon\ell}{C_1^2}-2\epsilon.\yestag
\end{align*}
This completes the first part by taking sufficiently small $\epsilon$.

For the remaining part, we are to show that $\sup_{n< N_1}\lVert
e^{(n)}_{i,k}\rVert_{\infty}<\infty$.  
Using Assumption~\ref{asm:cnt1}, and once again, %\fbox{check!!!}
Corollary~2 in \citet[p.~34]{MR1864085}, 
%Mercer's theorem \citep[Theorem~3.11.9(c)]{MR3364494}, 
the eigenfunctions $e^{(n)}_{i,k}$, $n< N_1$, are seen to be
continuous.  The remaining fact thus holds because $\Omega_i$ is compact 
and $N_1$ is finite.  With this last step, 
the proof of the lemma is completed.
\end{proof}

%\begin{lemma}\label{lem:aux2}
%For each $i=1,2$, we have
%$\E g^{(n)}_{i}(\mZ^{(n)}_{i},\mZ^{(n)}_{i})\to\E g_{i}(\mZ_{i},\mZ_{i})$.
%\end{lemma}
%\begin{proof}
%We have $\E g_{i}(\mZ^{(n)}_{i},\mZ^{(n)}_{i})\to \E g_{i}(\mZ_{i},\mZ_{i})$ 
%by the portmanteau lemma \citep[Lemma~2.2]{MR1652247}. 
%The result follows by noticing
%\[\E \lvert g^{(n)}_{i}(\mZ^{(n)}_{i},\mZ^{(n)}_{i})- g_{i}(\mZ^{(n)}_{i},\mZ^{(n)}_{i})\rvert \le \lVert g^{(n)}_{i}-g_{i}\rvert.\]
%\end{proof}

\subsubsection{Proof of Theorem \ref{cor:general}}\label{subsec:general}

\begin{proof}[Proof of Theorem \ref{cor:general}]
We consider the Hoeffding decomposition with respect to the product measure $\Pr_{\mZ_1^{(n)}}\times \Pr_{\mZ_2^{(n)}}$:
\begin{equation*}%\label{eqn:Hdec}
 \widehat \Pi^{(n)}
=\sum_{\ell=2}^{m}\underbrace{\mbinom{m}{\ell}\mbinom{n}{\ell}^{-1}
  \sum_{1\le i_1<\cdots< i_\ell\le n}\widetilde h_{\ell}\Big(
  (\mz^{(n)}_{1;i_1},\mz^{(n)}_{2;\pi_{i_1}}),\ldots,
  (\mz^{(n)}_{1;i_m},\mz^{(n)}_{2;\pi_{i_m}});
  \Pr_{\mZ_1^{(n)}}\times \Pr_{\mZ_2^{(n)}}\Big)}_{\displaystyle \widetilde D^{(n)}_{\ell}}.
\end{equation*}
We have proven in Theorem~\ref{thm:general} that 
\[\mbinom{m}{2}^{-1}n\widetilde D^{(n)}_{2}\stackrel{\sf d}{\longrightarrow}
\sum_{k_1,k_2=1}^{\infty}\lambda_{1,k_1}\lambda_{2,k_2}(\xi_{k_1,k_2}^2-1)\]
as $n\to\infty$. 
In order to prove that $n\widehat \Pi^{(n)}$ and $n\widetilde D^{(n)}_{2}$ have the same limiting distribution, 
we only need to show that $n\widetilde D^{(n)}_{\ell}\stackrel{\sf p}{\longrightarrow} 0$
for $\ell=3,\dots,m$ and apply Slutsky's theorem \citep[Theorem~2.8]{MR1652247}. 
To this end, it suffices to establish that $\E[(n\widetilde D^{(n)}_{\ell})^2]=O(n^{-1})$ 
for $\ell=3,\dots,m$. 

We start from the scenario $\ell=3$ and proceed in two steps, in which
we show that  (i) $\E[n\widetilde D^{(n)}_{3}]=O(n^{-1})$, and (ii) $\Var(n\widetilde D^{(n)}_{3})=O(n^{-1})$.
By symmetry,
\[\widetilde D^{(n)}_{3}=\mbinom{m}{3} (n)_3^{-1}\sum_{[i_1,i_2,i_3]\in I^{n}_{3}}
\widetilde h_{3}\Big(
(\mz^{(n)}_{1;i_1},\mz^{(n)}_{2;\pi_{i_1}}),
(\mz^{(n)}_{1;i_2},\mz^{(n)}_{2;\pi_{i_2}}),
(\mz^{(n)}_{1;i_3},\mz^{(n)}_{2;\pi_{i_3}});
  \Pr_{\mZ_1^{(n)}}\times \Pr_{\mZ_2^{(n)}}\Big).\yestag\label{eq:n3protect}\]
One readily verifies $\lVert\widetilde h_{3}\rVert_\infty\le 2^3\lVert h\rVert_\infty$.
To simplify notation, let %$\Delta^{(n)}_{3}(i_1,j_1;i_2,j_2;i_3,j_3)$ denote 
\begin{equation}\label{eq:defh3}
\Delta^{(n)}_{3}(i_1,j_1;i_2,j_2;i_3,j_3):=\widetilde h_{3}\Big(
(\mz^{(n)}_{1;i_1},\mz^{(n)}_{2;j_1}),
(\mz^{(n)}_{1;i_2},\mz^{(n)}_{2;j_2}),
(\mz^{(n)}_{1;i_3},\mz^{(n)}_{2;j_3});
\Pr_{\mZ_1^{(n)}}\times \Pr_{\mZ_2^{(n)}}\Big),
\end{equation}
and adopt the convention that replacing an index of $\Delta^{(n)}_{3}$
by a ``$\bullet$'' means averaging over this index.  In particular,
%Moreover, we define averages of $\Delta^{(n)}_{3}(i_1,j_1;i_2,j_2;i_3,j_3)$, in which replacing an index at any position by a ``$\bullet$''
%sign denotes average over all indices at this position:
\begin{align*}
\Delta^{(n)}_{3}(\bullet,j_1;i_2,j_2;i_3,j_3)
&:=\frac1n\sum_{i_1=1}^n\Delta^{(n)}_{3}(i_1,j_1;i_2,j_2;i_3,j_3),\\
\Delta^{(n)}_{3}(\bullet,\bullet;i_2,j_2;i_3,j_3)
&:=\frac1{n^2}\sum_{i_1=1}^n\sum_{j_1=1}^n\Delta^{(n)}_{3}(i_1,j_1;i_2,j_2;i_3,j_3),\\
  %\cdots~~~
  \Delta^{(n)}_{3}(\bullet,\bullet;\bullet,\bullet;\bullet,\bullet)
&:=\frac1{n^6}\sum_{i_1=1}^n\cdots\sum_{j_3=1}^n\Delta^{(n)}_{3}(i_1,j_1;i_2,j_2;i_3,j_3),\yestag\label{eq:avg}
\end{align*}
and other averages are defined similarly. We obtain using the definition \eqref{eq:defh3} that
\begin{align*}
               \Delta^{(n)}_{3}[\bullet,\bullet;i_2,j_2;i_3,j_3]&=&
    \mkern-70mu\Delta^{(n)}_{3}[i_1,j_1;\bullet,\bullet;i_3,j_3]&=&
    \mkern-70mu\Delta^{(n)}_{3}[i_1,j_1;i_2,j_2;\bullet,\bullet]&=0, \\
               \Delta^{(n)}_{3}[\bullet,\bullet;\bullet,\bullet;i_3,j_3]&=&
    \mkern-70mu\Delta^{(n)}_{3}[\bullet,\bullet;i_2,j_2;\bullet,\bullet]&=&
    \mkern-70mu\Delta^{(n)}_{3}[i_1,j_1;\bullet,\bullet;\bullet,\bullet]&=0, \\
                                                                        & &
    \mkern-70mu                                                         & &
    \mkern-70mu\Delta^{(n)}_{3}[\bullet,\bullet;\bullet,\bullet;\bullet,\bullet]&=0.
\yestag\label{eq:vanish}
\end{align*}

{\bf Step I. }
We show that $\E[n\widetilde D^{(n)}_{3}]=O(n^{-1})$. 
In view of \eqref{eq:n3protect}, we have
\[\mbinom{m}{3}^{-1}(n)_3\cdot\widetilde D^{(n)}_{3}= \sum_{[i_1,i_2,i_3]\in I^{n}_{3}}
\Delta^{(n)}_{3}(i_1,\pi_{i_1};i_2,\pi_{i_2};i_3,\pi_{i_3}).\]
Applying \eqref{eq:vanish}, direct calculation yields
\begin{align*}
 \;&\E\sum_{[i_1,i_2,i_3]\in I^{n}_{3}}\Delta^{(n)}_{3}(i_1,\pi_{i_1};i_2,\pi_{i_2};i_3,\pi_{i_3})\\
=\;&\frac{1}{(n)_3}\sum_{\substack{[i_1,i_2,i_3]\in I^{n}_{3},[j_1,j_2,j_3]\in I^{n}_{3}}}\Delta^{(n)}_{3}(i_1,j_1;i_2,j_2;i_3,j_3)\\
=\;&\frac{1}{(n)_3}\sum_{\substack{[i_1,i_2]\in I^{n}_{2},[j_1,j_2]\in I^{n}_{2}}}\Big\{
             -n\Delta^{(n)}_{3}(i_1,j_1;i_2,j_2;i_1,\bullet)
             -n\Delta^{(n)}_{3}(i_1,j_1;i_2,j_2;\bullet,j_1)\\
& \mkern180mu-n\Delta^{(n)}_{3}(i_1,j_1;i_2,j_2;i_2,\bullet)
             -n\Delta^{(n)}_{3}(i_1,j_1;i_2,j_2;\bullet,j_2)\\
&  \mkern180mu+\Delta^{(n)}_{3}(i_1,j_1;i_2,j_2;i_1,j_1)
              +\Delta^{(n)}_{3}(i_1,j_1;i_2,j_2;i_1,j_2)\\
&  \mkern180mu+\Delta^{(n)}_{3}(i_1,j_1;i_2,j_2;i_2,j_1)
              +\Delta^{(n)}_{3}(i_1,j_1;i_2,j_2;i_2,j_2)\Big\}\\
=\;&\frac{1}{(n-1)_2}\sum_{i_1\in\zahl{n},j_1\in\zahl{n}}
        \Big\{n\Delta^{(n)}_{3}(i_1,j_1;i_1,\bullet;i_1,\bullet)
             +n\Delta^{(n)}_{3}(i_1,j_1;\bullet,j_1;i_1,\bullet)
              -\Delta^{(n)}_{3}(i_1,j_1;i_1,j_1;i_1,\bullet)\\
& \mkern150mu+n\Delta^{(n)}_{3}(i_1,j_1;i_1,\bullet;\bullet,j_1)
             +n\Delta^{(n)}_{3}(i_1,j_1;\bullet,j_1;\bullet,j_1)
              -\Delta^{(n)}_{3}(i_1,j_1;i_1,j_1;\bullet,j_1)\Big\}\\
& +\frac{1}{(n-1)_2}\sum_{i_2\in\zahl{n},j_2\in\zahl{n}}
        \Big\{n\Delta^{(n)}_{3}(i_2,\bullet;i_2,j_2;i_2,\bullet)
             +n\Delta^{(n)}_{3}(\bullet,j_2;i_2,j_2;i_2,\bullet)
              -\Delta^{(n)}_{3}(i_2,j_2;i_2,j_2;i_2,\bullet)\\
& \mkern152mu+n\Delta^{(n)}_{3}(i_2,\bullet;i_2,j_2;\bullet,j_2)
             +n\Delta^{(n)}_{3}(\bullet,j_2;i_2,j_2;\bullet,j_2)
              -\Delta^{(n)}_{3}(i_2,j_2;i_2,j_2;\bullet,j_2)\Big\}\\
&+\frac{1}{(n)_3}\sum_{\substack{[i_1,i_2]\in I^{n}_{2},[j_1,j_2]\in I^{n}_{2}}}
         \Big\{\Delta^{(n)}_{3}(i_1,j_1;i_2,j_2;i_1,j_1)
              +\Delta^{(n)}_{3}(i_1,j_1;i_2,j_2;i_1,j_2)\\
&  \mkern180mu+\Delta^{(n)}_{3}(i_1,j_1;i_2,j_2;i_2,j_1)
              +\Delta^{(n)}_{3}(i_1,j_1;i_2,j_2;i_2,j_2)\Big\}
\;=\; O(n),\yestag
\end{align*}
where the implicit constant depends only on $\lVert h\rVert_\infty$.
This completes Step I.

{\bf Step II. }
We prove that $\Var(n\widetilde D^{(n)}_{3})=O(n^{-1})$. 
Notice that
\[\sum_{[i_1,i_2,i_3]\in I^{n}_{3}}\Delta^{(n)}_{3}(i_1,\pi_{i_1};i_2,\pi_{i_2};i_3,\pi_{i_3})=A_3-A_2-A_1,\]
where
\begin{align*}
A_3&:=\sum_{i_1=1}^{n}\sum_{i_2=1}^{n}\sum_{i_3=1}^{n}
               \Delta^{(n)}_{3}(i_1,\pi_{i_1};i_2,\pi_{i_2};i_3,\pi_{i_3}),\\
A_2&:=\sum_{[i_1,i_2]\in I^{n}_{2}}\Big\{
               \Delta^{(n)}_{3}(i_1,\pi_{i_1};i_1,\pi_{i_1};i_2,\pi_{i_2})
              +\Delta^{(n)}_{3}(i_1,\pi_{i_1};i_2,\pi_{i_2};i_1,\pi_{i_1})
              +\Delta^{(n)}_{3}(i_2,\pi_{i_2};i_1,\pi_{i_1};i_1,\pi_{i_1})\Big\},\\
A_1&:=\sum_{i_1=1}^{n}
               \Delta^{(n)}_{3}(i_1,\pi_{i_1};i_1,\pi_{i_1};i_1,\pi_{i_1}).
\end{align*}
We set
\begin{align*}
  \;&\widetilde\Delta^{(n)}_{3}(i_1,j_1;i_2,j_2;i_3,j_3)\\
:=\;&          \Delta^{(n)}_{3}(i_1,j_1;i_2,j_2;i_3,j_3)
              -\Delta^{(n)}_{3}(\bullet,j_1;i_2,j_2;i_3,j_3)
              -\cdots
              -\Delta^{(n)}_{3}(i_1,j_1;i_2,j_2;i_3,\bullet)\\
  \;&         +\Delta^{(n)}_{3}(\bullet,\bullet;i_2,j_2;i_3,j_3)
              +\Delta^{(n)}_{3}(\bullet,j_1;\bullet,j_2;i_3,j_3)
              +\cdots
              +\Delta^{(n)}_{3}(i_1,j_1;i_2,j_2;\bullet,\bullet)\\
  \;&         -\cdots
              +\Delta^{(n)}_{3}(\bullet,\bullet;\bullet,\bullet;\bullet,\bullet).\yestag\label{eq:daSilva}
\end{align*}
Combining \eqref{eq:vanish} and \eqref{eq:daSilva}, we deduce that
\[A_3=\sum_{i_1=1}^{n}\sum_{i_2=1}^{n}\sum_{i_3=1}^{n}
     \widetilde\Delta^{(n)}_{3}(i_1,\pi_{i_1};i_2,\pi_{i_2};i_3,\pi_{i_3}).\]
Here, $A_3$ can be decomposed as $A_3=\widetilde A_3+\widetilde A_2+\widetilde A_1$, where
\begin{align*}
\widetilde A_3&:=\sum_{[i_1,i_2,i_3]\in I^{n}_{3}}
     \widetilde\Delta^{(n)}_{3}(i_1,\pi_{i_1};i_2,\pi_{i_2};i_3,\pi_{i_3}),\\
\widetilde A_2&:=\sum_{[i_1,i_2]\in I^{n}_{2}}\Big\{
     \widetilde\Delta^{(n)}_{3}(i_1,\pi_{i_1};i_1,\pi_{i_1};i_2,\pi_{i_2})
    +\widetilde\Delta^{(n)}_{3}(i_1,\pi_{i_1};i_2,\pi_{i_2};i_1,\pi_{i_1})
    +\widetilde\Delta^{(n)}_{3}(i_2,\pi_{i_2};i_1,\pi_{i_1};i_1,\pi_{i_1})\Big\},\\
\widetilde A_1&:=\sum_{i_1=1}^{n}
     \widetilde\Delta^{(n)}_{3}(i_1,\pi_{i_1};i_1,\pi_{i_1};i_1,\pi_{i_1}).
\end{align*}
Hence,
\[\sum_{[i_1,i_2,i_3]\in I^{n}_{3}}
               \Delta^{(n)}_{3}(i_1,\pi_{i_1};i_2,\pi_{i_2};i_3,\pi_{i_3})
=\widetilde A_3+(\widetilde A_2-A_2)+(\widetilde A_1-A_1).\]
Using 
$\widetilde\Delta^{(n)}_{3}(\bullet,j_1;i_2,j_2;i_3,j_3)
%=\cdots=\widetilde\Delta^{(n)}_{3}(i_1,j_1;i_2,j_2;i_3,\bullet)
=\cdots=\widetilde\Delta^{(n)}_{3}(\bullet,\bullet;\bullet,\bullet;\bullet,\bullet)=0,$
  a straightforward calculation confirms that $\Var(\widetilde A_3)=O(n^3)$. First, for $i_1,i_2,i_3,i_1',i_2',i_3'$ distinct, we have
\begin{align*}
\;&\E[
     \widetilde\Delta^{(n)}_{3}(i_1,\pi_{i_1};i_2,\pi_{i_2};i_3,\pi_{i_3})
     \widetilde\Delta^{(n)}_{3}(i_1',\pi_{i_1'};i_2',\pi_{i_2'};i_3',\pi_{i_3'})]\\
=\;&\frac{1}{(n)_6}\sum_{[j_1,j_2,j_3,j_1',j_2',j_3']\in I^{n}_{6}}
     \widetilde\Delta^{(n)}_{3}(i_1,j_1;i_2,j_2;i_3,j_3)
     \widetilde\Delta^{(n)}_{3}(i_1',j_1';i_2',j_2';i_3',j_3')\\
=\;&-\frac{1}{(n)_6}\sum_{[j_1,j_2,j_3,j_1',j_2']\in I^{n}_{5}}
                  \;\sum_{j_3'\in\{j_1,j_2,j_3,j_1',j_2'\}}
     \widetilde\Delta^{(n)}_{3}(i_1,j_1;i_2,j_2;i_3,j_3)
     \widetilde\Delta^{(n)}_{3}(i_1',j_1';i_2',j_2';i_3',{j_3'})
\end{align*}
where
\begin{align*}
\;&-\frac{1}{(n)_6}\sum_{[j_1,j_2,j_3,j_1',j_2']\in I^{n}_{5}}
     \widetilde\Delta^{(n)}_{3}(i_1,j_1;i_2,j_2;i_3,j_3)
     \widetilde\Delta^{(n)}_{3}(i_1',j_1';i_2',j_2';i_3',{j_1})\\
=\;&\frac{1}{(n)_6}\sum_{[j_1,j_2,j_3,j_1']\in I^{n}_{4}}
                   \;\sum_{j_2'\in\{j_1,j_2,j_3,j_1'\}}
     \widetilde\Delta^{(n)}_{3}(i_1,j_1;i_2,j_2;i_3,j_3)
     \widetilde\Delta^{(n)}_{3}(i_1',j_1';i_2',{j_2'};i_3',{j_1}),\yestag\label{eq:inter1}\\
\;&-\frac{1}{(n)_6}\sum_{[j_1,j_2,j_3,j_1',j_2']\in I^{n}_{5}}
     \widetilde\Delta^{(n)}_{3}(i_1,j_1;i_2,j_2;i_3,j_3)
     \widetilde\Delta^{(n)}_{3}(i_1',j_1';i_2',j_2';i_3',{j_1'})\\
=\;&\frac{1}{(n)_6}\sum_{[j_1,j_2,j_3,j_1']\in I^{n}_{4}}
                   \;\sum_{j_2'\in\{j_1,j_2,j_3,j_1'\}}
     \widetilde\Delta^{(n)}_{3}(i_1,j_1;i_2,j_2;i_3,j_3)
     \widetilde\Delta^{(n)}_{3}(i_1',j_1';i_2',{j_2'};i_3',{j_1'}),\yestag\label{eq:inter2}
\end{align*}
and other summands can be rewritten similarly. 
Moreover, we have in \eqref{eq:inter1} that
\begin{align*}
\;&\frac{1}{(n)_6}\sum_{[j_1,j_2,j_3,j_1']\in I^{n}_{4}}
     \widetilde\Delta^{(n)}_{3}(i_1,j_1;i_2,j_2;i_3,j_3)
     \widetilde\Delta^{(n)}_{3}(i_1',j_1';i_2',{j_1};i_3',{j_1})\\
=\;&-\frac{1}{(n)_6}\sum_{[j_1,j_2,j_1']\in I^{n}_{3}}\;\sum_{j_3\in\{j_1,j_2,j_1'\}}
     \widetilde\Delta^{(n)}_{3}(i_1,j_1;i_2,j_2;i_3,{j_3})
     \widetilde\Delta^{(n)}_{3}(i_1',j_1';i_2',{j_1};i_3',{j_1}),\\
\;&\frac{1}{(n)_6}\sum_{[j_1,j_2,j_3,j_1']\in I^{n}_{4}}
     \widetilde\Delta^{(n)}_{3}(i_1,j_1;i_2,j_2;i_3,j_3)
     \widetilde\Delta^{(n)}_{3}(i_1',j_1';i_2',{j_1'};i_3',{j_1})\\
=\;&-\frac{1}{(n)_6}\sum_{[j_1,j_2,j_1']\in I^{n}_{3}}\;\sum_{j_3\in\{j_1,j_2,j_1'\}}
     \widetilde\Delta^{(n)}_{3}(i_1,j_1;i_2,j_2;i_3,{j_3})
     \widetilde\Delta^{(n)}_{3}(i_1',j_1';i_2',{j_1'};i_3',{j_1}),
\end{align*}
and similar equations for all the other summands.   In \eqref{eq:inter2},
\begin{align*}
\;&\frac{1}{(n)_6}\sum_{[j_1,j_2,j_3,j_1']\in I^{n}_{4}}
     \widetilde\Delta^{(n)}_{3}(i_1,j_1;i_2,j_2;i_3,j_3)
     \widetilde\Delta^{(n)}_{3}(i_1',j_1';i_2',{j_1};i_3',{j_1'})\\
=\;&-\frac{1}{(n)_6}\sum_{[j_1,j_2,j_1']\in I^{n}_{3}}\;\sum_{j_3\in\{j_1,j_2,j_1'\}}
     \widetilde\Delta^{(n)}_{3}(i_1,j_1;i_2,j_2;i_3,{j_3})
     \widetilde\Delta^{(n)}_{3}(i_1',j_1';i_2',{j_1};i_3',{j_1'}),\\
\;&\frac{1}{(n)_6}\sum_{[j_1,j_2,j_3,j_1']\in I^{n}_{4}}
     \widetilde\Delta^{(n)}_{3}(i_1,j_1;i_2,j_2;i_3,j_3)
     \widetilde\Delta^{(n)}_{3}(i_1',j_1';i_2',{j_1'};i_3',{j_1'})\\
=\;&-\frac{1}{(n)_6}\sum_{[j_1,j_2,j_1']\in I^{n}_{3}}\;\sum_{j_3\in\{j_1,j_2,j_1'\}}
     \widetilde\Delta^{(n)}_{3}(i_1,j_1;i_2,j_2;i_3,{j_3})
     \widetilde\Delta^{(n)}_{3}(i_1',j_1';i_2',{j_1'};i_3',{j_1'}),
\end{align*}
and similar equations for all the other summands. 
It follows that
\[\sum_{[i_1,i_2,i_3,i_1',i_2',i_3']\in I^{n}_{6}}\E[
     \widetilde\Delta^{(n)}_{3}(i_1,\pi_{i_1};i_2,\pi_{i_2};i_3,\pi_{i_3})
     \widetilde\Delta^{(n)}_{3}(i_1',\pi_{i_1'};i_2',\pi_{i_2'};i_3',\pi_{i_3'})]=O(n^3).\]
Similar calculations for the cases when the pairs $[i_1,i_2,i_3]$ and $[i_1',i_2',i_3']$ have one, two, or three indices in common, give a total contribution of at most $O(n^3)$.
Adding these together shows that $\Var(\widetilde A_3)=O(n^3)$.
This together with
$\Var(\widetilde A_2-A_2)=O(n^3)$ (similar to \citealp[p.~2212]{MR1474091};
\citealp[Lemma~3.1]{MR2205339}) 
and $\Var(\widetilde A_1-A_1)=O(n)$ \citep[Theorem~2]{MR0044058} implies that 
$\Var(n\widetilde D^{(n)}_{3})=O(n^{-1})$. 

Taken together the two steps we carried out prove that $\E[(n\widetilde D^{(n)}_{3})^2]=O(n^{-1})$. 
The proofs for $\E[(n\widetilde D^{(n)}_{\ell})^2]=O(n^{-1})$, $\ell=4,\dots,m$, are very similar and hence omitted.
\end{proof}

\subsection{Proofs for Section \ref{sec:computation} of the main paper}

\subsubsection{Proof of Theorem \ref{thm:matching}}\label{sec:proof-thm-matching}

\begin{proof}[Proof of Theorem \ref{thm:matching}]
Introducing the dummy variables $x_{ij}$ with
\[
x_{ij}=\begin{cases} 1 & \text{if edge $(\ms_i,\mt_j)$ is in the
    % optimal
    matching,}\\
0 & \text{otherwise,}
\end{cases}
\]
the LSAP can be formulated as a linear program:
\begin{align*}
&\min_{x_{ij}}     &&\mkern-20mu\sum_{i,j}c_{ij}x_{ij}\\
&\text{subject to} &&\mkern-20mu\sum_{j=1}^{n} x_{ij}=1,~\text{for}~i\in\zahl{n};
~\sum_{i=1}^{n} x_{ij}=1,~\text{for}~j\in\zahl{n};
~x_{ij}\in\{0,1\},~\text{for}~i,j\in\zahl{n}.
\end{align*}
Then an edge $(\ms_i,\mt_j)$ is in the optimal matching if and only if
the solution to the linear program has $x_{ij}=1$.  The dual linear program is
\begin{align*}
&\max_{\alpha_i,\beta_j} &&\mkern-120mu\sum_{i}\alpha_{i}+\sum_{j}\beta_{j}\\
&\text{subject to}       &&\mkern-120mu\alpha_{i}+\beta_{j}\le c_{ij},~\text{for}~i, j\in\zahl{n};
~\alpha_{i},\beta_{j}~\text{unconstrained}.
\end{align*}
The sufficient and necessary condition for an optimal solution to the
LSAP is
%\[x_{ij}(c_{ij}-\alpha_{i}-\beta_{j})=0,~\text{for}~i,j\in\zahl{n}.\]
\begin{align*}
&\alpha_{i}+\beta_{j}\le c_{ij},  & &\mkern-150mu\text{for}~i,j\in\zahl{n},  \\
&\alpha_{i}+\beta_{j}= c_{ij},    & &\mkern-150mu\text{for}~x_{ij}=1.
\end{align*}

We introduce a few more terms that are convenient for our description.
A \emph{matching} is a subset of edges whose vertices are disjoint.
A matching $M$ is \emph{$1$-feasible} if the dual variables satisfy that
\begin{align*}
&\alpha_{i}+\beta_{j}\le c_{ij}+1, & &\mkern-150mu\text{for}~i,j\in\zahl{n},  \\
&\alpha_{i}+\beta_{j}= c_{ij},     & &\mkern-150mu\text{for}~(\ms_i,\mt_j)\in M.
\end{align*}
A \emph{$1$-optimal matching} is a $1$-feasible perfect matching.
An edge $(\ms_i,\mt_j)$ is called \emph{admissible} with regard to a matching $M$ if $\alpha_i+\beta_j=c_{ij}+\ind((\ms_i,\mt_j)\not\in M)$.
An \emph{admissible graph} is the union of a matching $M$ and the set of all admissible edges.
A vertex is called \emph{exposed} if it is not incident to any edge in the current matching.
An \emph{alternating path} is one that starts with an exposed vertex
and alternatingly traverses edges in the matching and not.
An \emph{alternating tree} is a rooted tree whose paths are alternating paths from its root.
A \emph{labelled} vertex is one that belongs to any alternating tree.
An \emph{augmenting path} is an alternating path between two exposed vertices.

For every $\ms_i\in S$, $\mt_j\in T$, let $c^*_{ij}=(n+1)c_{ij}$. It is equivalent to find the optimal matching for the weights $c^*_{ij}$ and that for the weights $c_{ij}$. Let ${b_1b_2\cdots b_k}_{(2)}$ stand for the binary representation of $c^*_{ij}$, where $k\le \lfloor\log_2((n+1)N)\rfloor+1$. 
We initialize the weights $c^{(0)}_{ij}$ and the dual variables $\alpha^{(0)}_{i},\beta^{(0)}_{j}$, $i,j\in\zahl{n}$ to zero and the matching $M$ to empty matching.
The scaling algorithm proceeds in $k$ stages. 
At the $r$-th stage, we go through match routines to find a $1$-optimal matching, 
where the weight $c^{(r)}_{ij}$ of edge $(\ms_i,\mt_j)$ has the binary representation ${b_1b_2\cdots b_r}_{(2)}$ (and thus is equal to $2c^{(r-1)}_{ij}$ or $2c^{(r-1)}_{ij}+1$),
starting from dual variables $\alpha^{(r)}_{i}:=2\alpha^{(r-1)}_{i},\beta^{(r)}_{j}:=2\beta^{(r-1)}_{j}$, $i,j\in\zahl{n}$.

The match routine computes a $1$-optimal matching in several phases, each of which consists of augmenting the matching and doing a Hungarian search. 
Let $M$ be the current matching initialized to empty matching.
We will omit the superscript index $(r)$ when there is no confusion.

{\bf Step I.}
We first obtain a maximal set $\cP$ of vertex-disjoint augmenting
paths in the admissible graph by performing a depth first search. The
depth first search marks every vertex visited; initially no vertex is
marked. We grow an augmenting path $P$ starting from an exposed vertex
$\mt_j\in T$ by searching all admissible edges and finding an edge
$(\mt_j,\ms_i)$ where $\ms_i\in S$ is not marked. If such $\ms_i$
exists, we mark $\ms_i$, add edge $(\mt_j,\ms_i)$ to $P$, and then (1)
if $\ms_i$ is also exposed, add the augmenting path $P$ to $\cP$, and
start finding the next augmenting path; (2) if $\ms_i$ is matched to
$\mt_k$ ($k\ne j$ since $\ms_i$ has not been marked until this step),
we mark $\mt_k$, add edge $(\ms_i,\mt_k)$ to $P$, and continue
searching from $\mt_k$. If there is no $\ms_i$ unmarked, we delete the
last two edges in path $P$ and (1) restart searching if $P$ is not
empty; (2) initialize a new path otherwise. We repeat these steps
until we have gone through all exposed vertices in $T$.
Then for each path $P\in\cP$, we augment the matching $M$ by replacing edges in the even step  with the ones in the odd steps, 
 and decrease dual variables $\alpha_i$ by $1$ for all $\ms_i\in A\cap P$ to maintain $1$-feasibility. If the new matching is perfect, the routine halts, otherwise we do a Hungarian search as below.

{\bf Step II.}
For each exposed vertex $\mt_j\in T$, we grow an alternating tree rooted at $\mt_j$ such that each vertex in $S\cup T$ that in this tree is reachable from the root via an alternating path consisting only of admissible edges. For a vertex in $S$ (resp. $T$) in an alternating tree, the path from the root is augmenting (resp. not augmenting). Let $LS$ (resp. $LT$) denote the set of vertices in $S$ (resp. $T$) that are labelled. At the beginning of Hungarian search, $LT$ is defined as the set of the exposed vertices in $T$ and $LS=\varnothing$. Define
\[
\delta=\min_{\ms_i\in S-LS,\,\mt_j\in LT} \Big\{c_{ij}+\ind((\ms_i,\mt_j)\not\in M)-\alpha_i-\beta_j\Big\}.
\]
Depending on whether $\delta=0$ or $\delta>0$, one of the following steps is taken:

{Case 1.} 
$\delta=0$ (find an augmenting path or add to alternating trees). Let $(\ms_i,\mt_j)$ for $\ms_i\in S-LS$ and $\mt_j\in LT$ be an admissible edge, where the existence is guaranteed by $\delta=0$. If $\ms_i$ is exposed, an augmenting path has been found 
and the Hungarian search ends. If $\ms_i$ is matched to $\mt_k$ for some $k\ne j$ (notice that $\ms_i$ cannot be matched to $\mt_j$ since $\ms_i$ is not labelled currently), we add the edges $(\mt_j,\ms_i)$ and $(\ms_i,\mt_k)$ to all the alternating trees that involve $\mt_j$,  update $LS$ and $LT$ by adding vertices $\ms_i$ and $\mt_k$ respectively, and recompute $\delta$.

{Case 2.} 
$\delta>0$ (update the dual solution). We decrease $\alpha_i$ by $\delta$ for each $\ms_i\in LS$, increase $\beta_j$ by $\delta$ for each $\mt_j\in LT$, and recompute $\delta$.

In summary, there are $O(\log(nN))$ stages. At each stage, one routine consists of  $O(\sqrt{n})$ phases, and each phase runs in $O(n^2)$ time. The overall running time is $O(n^{5/2}\log(nN))$. 
\end{proof}

\subsubsection{Proof of Theorem \ref{thm:eigvcomp}}

\begin{proof}[Proof of Theorem \ref{thm:eigvcomp}]
In order to prove $Q_{1-\alpha}^{(M)} \to Q_{1-\alpha}$ as $M_R\to\infty$ and $M_S\to\infty$, 
it suffices to show that
\[\sum_{k=1}^{(M-1)^2}\lambda_k^{(M)}(\xi_k^2-1)\stackrel{\sf d}{\longrightarrow}\sum_{k=1}^{\infty}\lambda_k(\xi_k^2-1).\]
We only need to show the convergence of moment-generating functions: 
\begin{equation}\label{eq:mgf1}
\E\Big[\exp\Big(t\sum_{k=1}^{(M-1)^2}\lambda_k^{(M)}(\xi_k^2-1)\Big)\Big]
\to\E\Big[\exp\Big(t\sum_{k=1}^{\infty}\lambda_k(\xi_k^2-1)\Big)\Big]
\end{equation}
as $M\to\infty$, for all $t\in[-r,r]$ and some $r>0$, by arguments in \citet[p.~390]{MR1324786}. Notice that \eqref{eq:mgf1} is equivalent to
\begin{equation}\label{eq:mgf2}
\prod_{k=1}^{(M-1)^2}\frac{\Big(1-2t\lambda_k^{(M)}\Big)^{-1/2}}{\exp\Big(\lambda_k^{(M)}\Big)}
\to\prod_{k=1}^{\infty}\frac{(1-2t\lambda_k)^{-1/2}}{\exp(\lambda_k)}.
\end{equation}
We have by Item~(vi) in \citet{MR3813995} that $\lambda_k>0$ and
\[\sum_{k=1}^{\infty}\lambda_k=\E\lVert\mU-\mU_{*}\rVert\cdot \E\lVert\mV-\mV_{*}\rVert<\infty,\]
where $\mU\sim U_p$, $\mV\sim U_q$, and $\mU_{*}$ and $\mV_{*}$ are independent copies of $\mU$ and $\mV$, respectively.
This implies that the right-hand side of \eqref{eq:mgf2} converges to a nonzero real number for every $t\in[-r,r]$ where $r$ is some fixed small positive number
%\citep[Theorem~1]{MR1720455} 
\citep[Theorem~15.5]{MR0924157}. 
This together with the fact that, $\lambda_k^{(M)}\to\lambda_k$ for each fixed $k$ as $M\to\infty$ by \eqref{eq:gammaconver}, concludes \eqref{eq:mgf2}.
\end{proof}

\section{A particular construction of $\cG^{d}_{n_0,n_R,n_S}$}\label{supp:B}

Assuming {$d\ge 2$}, we give a particular construction
of $n_S$ distinct unit vectors $\{\mr_1,\dots,\mr_{n_S}\}$
such that the uniform discrete distribution on this set 
converges weakly to the uniform distribution on $\cS_{d-1}$.
To this end, 
%We have the finite sample $(\mX_i)_{i=1}^{n}$ which yields empirical measure weakly converging to $P$, and as a counterpart,
%We  
%with corresponding empirical measure converging to $U_d$ 
%as follows: 
let us first factorize $n$ into the following form:
\begin{align}\label{eq:key-supp}
n_S=\prod\nolimits_{m=1}^{d-1}n_m,
~~~~~~n_1,n_2,\dots,n_{d-1}\in\Z_+,
~~~\text{with}~n_1,n_2,\dots,n_{d-1}\to\infty~~~\text{as}~n_S\to\infty.
\end{align}
A factorization of $n$ satisfying \eqref{eq:key} and \eqref{eq:key-supp} together will always exist. Indeed, letting $n_{*}:=\lfloor n^{1/(2d-2)}\rfloor$, one possibility is to take $n_1,n_2,\dots,n_{d-1}=n_{*}$, $n_S=n_*^{d-1}$, $n_R=\lfloor n/n_S\rfloor$ (noticing $n_S\le n_R$), and $n_0=n-n_Rn_S$.

%We fix a factorization for any given $n$ where
%$n_R,n_1,\dots,n_{d-1}$ are chosen to be as close to each other as possible.
%Consider the augmented grid given by combining $n_0$ copies of the origin $\bm{0}$ whenever $n_0>0$ and $n_Rn_S$ points obtained as the intersection between $n_R$ hyperspheres centered at the origin, with radii $1/(n_R+1),\dots,n_R/(n_R+1)$
%and $n_S$ unit vectors ``as uniform as possible'' as described below.
To construct deterministic points in the unit ball, we consider
spherical coordinates.  Let $\mt=(t_1,\dots,t_d)^\top\in\R^d$ be a
vector in
Cartesian coordinates.   Its spherical coordinates
$(r,\varphi_1,\ldots,\varphi_{d-1})^\top$ are defined implicitly
as
\begin{align*}
t_{1}&=r\cos(\varphi_{1}),~~~
t_{2}=r\sin(\varphi_{1})\cos(\varphi_{2}),\\
%t_{3}&=r\sin(\varphi_{1})\sin(\varphi_{2})\cos(\varphi_{3}),\\
&\vdots \\
t_{d-1}&=r\sin(\varphi_{1})\cdots \sin(\varphi_{d-2})\cos(\varphi_{d-1}),\\
t_{d}&=r\sin(\varphi_{1})\cdots \sin(\varphi_{d-2})\sin(\varphi_{d-1}),\yestag\label{eq:car}
\end{align*}
where $r\in[0,\infty)$, $\varphi_{1},\dots,\varphi_{d-2}\in[0,\pi]$,
and $\varphi_{d-1}\in[0,2\pi)$. Notice that the inverse transform is
unique, while the transform is not unique in some special cases: if
$r=0$, then $\varphi_{1},\dots,\varphi_{d-1}$ are arbitrary; if
$\varphi_{m}\in\{0,\pi\}$, then $\varphi_{m+1},\dots,\varphi_{d-1}$
are arbitrary.  To avoid any ambiguity, we make the
spherical coordinates unique by specifying that arbitrary
coordinates are zero in these cases.

%The approximation problem is then settled in two steps. In the first
%step, and specified in the following lemma, 
The following lemma constructs a set of points
on the unit sphere such that the uniform discrete distribution on this
set will weakly converge to the uniform distribution over $\cS_{d-1}$.

\begin{lemma}\label{lem:unif}
{When $d\ge2$, }for each $m\in\zahl{d-1}$, let
$u_{m,j}=(2j-1)/(2n_{m})$ for $j\in\zahl{n_m}$, and define the function $g_m: [0,\pi] \to \mathbb{R}$ as
\[g_m(\theta):=%\int_{0}^{\theta}\sin^m u du=
\begin{cases}
{\displaystyle \frac{1}{2^{m-1}}\sum_{k=0}^{(m-1)/2}(-1)^{\{(m-1)/2-k\}}\mbinom{m}{k}\frac{1-\cos\{(m-2k)\theta\}}{m-2k}}, & \text{if $m$ is odd},\\[1em]
{\displaystyle \frac{1}{2^m}{\mbinom{m}{{m}/{2}}}\theta+{\frac{1}{2^{m-1}}}\sum_{k=0}^{{m}/{2}-1}(-1)^{({m}/{2}-k)}\mbinom{m}{k}\frac{\sin\{(m-2k)\theta\}}{m-2k}}, & \text{if $m$ is even}.
\end{cases}
\yestag\label{eq:sinintegral}
\]
Let 
\begin{equation}\label{eq:coord}
  \varphi_{m,j}=
  \begin{cases}
    g_{d-1-m}^{-1}\Big(\frac{\sqrt{\pi}\Gamma((m+1)/2)}{\Gamma(m/2+1)}u_{m,j}\Big),&\text{for
    } m\in\zahl{d-2} \;\;\text{and}\;\; j\in\zahl{n_m},\\
    2\pi u_{d-1,j}, &\text{for } m=d-1\;\;\text{and}\;\; j\in\zahl{n_{d-1}}.
  \end{cases}
  % \begin{cases}
  %   g_{d-1-m}^{-1}\Big(\frac{\sqrt{\pi}\Gamma((m+1)/2)}{\Gamma(m/2+1)}u_{m,j}\Big),&~\text{for}~m\in\zahl{d-2}~~~\text{and}~~~j\in\zahl{n_m},\\
  %   2\pi u_{d-1,j}, &~~~\text{for}~m=d-1~~~\text{and}~~~j\in\zahl{n_{d-1}},
  % \end{cases}
\end{equation}
Then the uniform discrete distribution on the set %\fbox{they are also vectors, right?} 
$\{\mt_{j_1,\ldots,j_{d-1}};
j_1\in\zahl{n_1},\ldots,j_{d-1}\in\zahl{n_{d-1}}\}$ of points with spherical coordinates $(1,\varphi_{1,j_1},\dots,\varphi_{d-1,j_{d-1}})^\top$
weakly converges to the uniform distribution over $\cS_{d-1}$ as $n_1,\dots,n_{d-1}\to\infty$.
\end{lemma}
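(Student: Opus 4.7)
The plan is to recognize the construction as an inverse-CDF (quantile) discretization of the marginal distributions of the spherical-coordinate angles under the uniform measure on $\cS_{d-1}$, and then transfer weak convergence from the parameter space to $\cS_{d-1}$ via the continuous mapping theorem. First, I would verify that $g_m(\theta) = \int_0^\theta \sin^m(t)\,dt$ by applying the standard expansion $\sin^m(t) = (2\sfi)^{-m}(e^{\sfi t} - e^{-\sfi t})^m$, expanding via the binomial theorem, pairing conjugate terms, and integrating term-by-term; the even and odd $m$ cases produce the two branches of \eqref{eq:sinintegral}. Plugging $\theta = \pi$ together with the Beta-function identity gives the Wallis value $g_m(\pi) = \sqrt{\pi}\,\Gamma((m+1)/2)/\Gamma(m/2+1)$.

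Next I would identify the target marginals. Parameterizing $\cS_{d-1}$ via \eqref{eq:car} yields the surface-area element $\prod_{m=1}^{d-2}\sin^{d-1-m}(\varphi_m)\,d\varphi_1\cdots d\varphi_{d-1}$, so under the uniform probability measure on $\cS_{d-1}$ the angles $\varphi_1,\dots,\varphi_{d-1}$ are mutually independent, with $\varphi_m$ having density proportional to $\sin^{d-1-m}(\varphi_m)$ on $[0,\pi]$ for $m\in\zahl{d-2}$ and $\varphi_{d-1}$ uniform on $[0,2\pi)$. Consequently the inverse CDF of $\varphi_m$ at level $u\in(0,1)$ is (up to normalization by the total mass $g_{d-1-m}(\pi)$) exactly the quantity $g_{d-1-m}^{-1}(\,\cdot\,)$ appearing in \eqref{eq:coord}, so the $\varphi_{m,j}$'s are precisely quantile values of the $m$-th marginal evaluated at the equispaced levels $u_{m,j} = (2j-1)/(2n_m)$; the $m=d-1$ case reduces to inverting the uniform CDF on $[0,2\pi)$.

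Since the $u_{m,j}$'s are equispaced in $(0,1)$ and each target marginal is absolutely continuous with strictly positive, continuous density on its support, the uniform discrete distribution on $\{\varphi_{m,j}\}_{j=1}^{n_m}$ weakly converges to the marginal law of $\varphi_m$ as $n_m\to\infty$; this reduces to uniform convergence of empirical CDFs, which follows from uniform continuity of the target CDF together with $\max_j|u_{m,j} - j/n_m|\le 1/(2n_m)$. Applying Theorem~2.8 of \citet{MR1700749} to the product structure then yields weak convergence on $[0,\pi]^{d-2}\times[0,2\pi)$ as $n_1,\dots,n_{d-1}\to\infty$, and a final application of the continuous mapping theorem to the spherical-to-Cartesian transformation in \eqref{eq:car} transfers this to weak convergence on $\cS_{d-1}$. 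The main technical nuance I anticipate is the boundary set $\{\varphi_m\in\{0,\pi\}\text{ for some }m\in\zahl{d-2}\}$ on which the spherical-to-Cartesian map fails to be injective; this is harmless because the quantile levels $u_{m,j}\in(0,1)$ keep every approximating atom away from that set, and the uniform measure on $\cS_{d-1}$ assigns zero mass to its image, so the continuous mapping theorem applies cleanly via the portmanteau lemma.
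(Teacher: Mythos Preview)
Your proposal is correct and follows essentially the same route as the paper. Both arguments rest on the identification $g_m(\theta)=\int_0^\theta \sin^m t\,dt$, the factorization of the surface measure in spherical coordinates, weak convergence of the equispaced levels $u_{m,j}$ to $\mathrm{Unif}(0,1)$, and the continuous mapping theorem; the only difference is ordering. You front-load the identification of the target marginals and then transfer convergence, whereas the paper first pushes the discrete uniforms through $g_{d-1-m}^{-1}$ via continuous mapping and only afterwards verifies, by the same Jacobian computation, that the limit is uniform on $\cS_{d-1}$. Your explicit treatment of the non-injectivity set $\{\varphi_m\in\{0,\pi\}\}$ is a useful addition that the paper leaves implicit.
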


The above construction might look mysterious at the first
  sight. Indeed, to construct an asymptotically uniform grid over
  $\cS_{d-1}$, it is tempting to take a product of univariate uniform
  grids over all spherical coordinates. Unfortunately, points picked
  in this way can be shown to concentrate at the poles, and hence
  cannot serve the desired purpose.  Instead, a more elaborate
  construction such as the one in Lemma \ref{lem:unif} is needed.
%As a matter of fact, this also partially explains why the construction in Lemma \ref{lem:unif} appears so complicated. 

%\subsubsection{Proof of Lemma \ref{lem:unif}}

\begin{proof}[Proof of Lemma \ref{lem:unif}]
We proceed in three steps. 
First, we give an alternative form of the uniform discrete distribution on the points
$\{\mt_{j_1,\ldots,j_{d-1}}; j_1\in\zahl{n_1},\ldots,j_{d-1}\in\zahl{n_{d-1}}\}$ with spherical coordinates $(1,\varphi_{1,j_1},\dots,\varphi_{d-1,j_{d-1}})^\top$.
Next, we find this uniform distribution's limiting distribution as $n_1,\dots,n_{d-1}\to\infty$. 
Lastly, we prove that this limiting distribution is uniformly distributed over the unit sphere $\cS_{d-1}$.

First, let $Z_m^{(n_m)}$ be random variables uniformly discrete distributed on the points $\{u_{m,j},j\in\zahl{n_m}\}$ for all $m\in\zahl{d-1}$  such that $Z_1^{(n_1)},\dots,Z_{d-1}^{(n_{d-1})}$ are mutually independent.
Notice that the uniform discrete distribution on the points $\{\mt_{j_1,\ldots,j_{d-1}}; j_1\in\zahl{n_1},\ldots,j_{d-1}\in\zahl{n_{d-1}}\}$ with spherical coordinates $(1,\varphi_{1,j_1},\dots,\varphi_{d-1,j_{d-1}})^\top$ is identical to
the distribution given by random spherical coordinates $(1,\varPhi_{1}^{(n_1)},\dots,\varPhi_{d-1}^{(n_{d-1})})^\top$, where
\[
\varPhi_{m}^{(n_m)}=\begin{cases}
g_{d-1-m}^{-1}(\frac{\sqrt{\pi}\Gamma((m+1)/2)}{\Gamma(m/2+1)}Z_m^{(n_m)}),&~\text{for}~m\in\zahl{d-2},\\
2\pi Z_{d-1}^{(n_{d-1})}, &~\text{for}~m=d-1.
\end{cases}\yestag\label{eq:empunifalter}
\]

Second, we determine the limit of 
the distribution with random spherical coordinates \eqref{eq:empunifalter}
as $n_1,\dots,n_{d-1}\to\infty$.
Let $Z_1,\dots,Z_{d-1}$ be independent random variables that are uniformly distributed on $(0,1)$. 
We have $Z_m^{(n_m)}\stackrel{\sf d}{\longrightarrow}Z_m$ for $m\in\zahl{d-1}$ as $n_m\to\infty$ by the following argument: 
\[\Pr(Z_m^{(n_m)}\le x)=\frac{\lfloor n_mx+1/2\rfloor}{n_m}\to x =\Pr(Z_m\le x),~~~\text{for}~x\in(0,1),\yestag\label{eq:simplefact}\]
as $n_m\to\infty$.
Accordingly, the limiting distribution of \eqref{eq:empunifalter} is 
given by random spherical coordinates $(1,\varPhi_{1},\dots,\varPhi_{d-1})^\top$, where
\[
\varPhi_{m}=\begin{cases}
g_{d-1-m}^{-1}(\frac{\sqrt{\pi}\Gamma((m+1)/2)}{\Gamma(m/2+1)}Z_m),&~\text{for}~m\in\zahl{d-2},\\
2\pi Z_{d-1}, &~\text{for}~m=d-1,
\end{cases}\yestag\label{eq:unifalter}
\]
due to the continuous mapping theorem \citep[Theorem~2.3]{MR1652247}. 
%Cram\'{e}r--Wold device \citep[Theorem~29.4]{MR1324786},
%L\'{e}vy's continuity theorem \citep[Theorem~26.3]{MR1324786}, 
%and the multiplicative property of characteristic functions of the sum of independent random variables \citep[Eq.~(26.12)]{MR1324786}.

Lastly, we show that the distribution given by random spherical coordinates \eqref{eq:unifalter} 
is uniformly distributed over the unit sphere $\cS_{d-1}$.
The area element of $\cS_{d-1}$, denoted by $\d_{\cS_{d-1}}V$, can be written in terms of spherical coordinates as 
\[
\d_{\cS_{d-1}}V
%=\sin^{d-2}(\varphi_{1})\sin^{d-3}(\varphi_{2})\cdots \sin(\varphi_{d-2}) \d\varphi _{1} \d\varphi_{2}\cdots \d\varphi_{d-1}
=\Big(\prod_{m=1}^{d-2} \sin^{d-1-m}(\varphi_{m})\d\varphi_m\Big) \cdot \d\varphi_{d-1}
=\Big(\prod_{m=1}^{d-2} \d(g_{d-1-m}(\varphi_m))\Big) \cdot \d\varphi_{d-1},
\yestag\label{eq:whyunif1}
\]
where the first equality is by \citet{MR1530579} and the last equality uses the trigonometric power-reduction formulas %\citep[p.~445]{MR0685759}
\citep[p.~388]{MR0910542}. 
Here $g_m(\theta)$ is defined as \eqref{eq:sinintegral}.
%Using the trigonometric power-reduction formulas %\citep[p.~445]{MR0685759}
%\citep[p.~388]{MR0910542}, 
%\[\sin^{m}\theta=
%\begin{cases}
%{\displaystyle \frac{1}{2^{m-1}}\sum_{k=0}^{(m-1)/2}(-1)^{\{(m-1)/2-k\}}\mbinom{m}{k}\sin\{(m-2k)\theta\}}, &~\text{if $m$ is odd},\\
%{\displaystyle \frac{1}{2^m}{\mbinom{m}{m/2}}+{\frac{2}{2^{m}}}\sum_{k=0}^{{m}/{2}-1}(-1)^{(m/{2}-k)}\mbinom{m}{k}\cos\{(m-2k)\theta\}}, &~\text{if $m$ is even},
%\end{cases}
%%https://math.stackexchange.com/questions/1968727/integrating-sinnx
%%https://en.wikipedia.org/wiki/List_of_trigonometric_identities#Power-reduction_formulae
%%http://mathworld.wolfram.com/TrigonometricPowerFormulas.html
%\]
%we deduce that 
%\[
%\d_{\cS_{d-1}}V
%=\Big(\prod_{m=1}^{d-2} \d(g_{d-1-m}(\varphi_m))\Big) \cdot \d\varphi_{d-1},
%\yestag\label{eq:whyunif1}
%\] 
%where $g_m(\theta)$ is defined as \eqref{eq:sinintegral}.
%\[g_m(\theta):=
%\begin{cases}
%{\displaystyle \frac{2}{2^m}\sum_{k=0}^{(m-1)/2}(-1)^{\{(m-1)/2-k\}}\mbinom{m}{k}\frac{1-\cos\{(m-2k)\theta\}}{m-2k}}, & \text{ if $m$ is odd},\\
%{\displaystyle \frac{1}{2^m}{\mbinom{m}{{m}/{2}}}\theta+{\frac{2}{2^{m}}}\sum_{k=0}^{{m}/{2}-1}(-1)^{({m}/{2}-k)}\mbinom{m}{k}\frac{\sin\{(m-2k)\theta\}}{m-2k}}, & \text{ if $m$ is even}.
%\end{cases}
%\]
%For each integer $m$, $g_m(\theta)$ is monotonically increasing on $[0,\pi]$ and ranges from $0$ to ${\sqrt{\pi}\Gamma((m+1)/2)}/{\Gamma(m/2+1)}$ \citep[p.~381]{MR0910542}.
The  transformation corresponding to \eqref{eq:unifalter} is
\[
\varphi_{m}=\begin{cases}
g_{d-1-m}^{-1}(\frac{\sqrt{\pi}\Gamma((m+1)/2)}{\Gamma(m/2+1)}z_m),&~\text{for}~m\in\zahl{d-2},\\
2\pi z_{d-1}, &~\text{for}~m=d-1,
\end{cases}\yestag\label{eq:unifaltersamll}
\]
which is a bijection between $(0,1)$ and $(0,\pi)$ for $m\in\zahl{d-2}$ 
\citep[p.~381]{MR0910542},
and a bijection between $(0,1)$ and $(0,2\pi)$ for $m=d-1$. 
In view of \eqref{eq:unifaltersamll}, we have 
\[
\frac{\d(g_{d-1-m}(\varphi_m))}{\d z_m}=\frac{\sqrt{\pi}\Gamma((m+1)/2)}{\Gamma(m/2+1)},~\text{for}~m\in\zahl{d-2},
~~~\text{and}~~~
\frac{\d \varphi_{d-1}}{\d z_{d-1}}=2\pi.\yestag\label{eq:whyunif2}
\]
Plugging \eqref{eq:whyunif2} into \eqref{eq:whyunif1} yields
\[\d_{\cS_{d-1}}V=2\pi\prod_{m=1}^{d-2}\frac{\sqrt{\pi}\Gamma((m+1)/2)}{\Gamma(m/2+1)}
\cdot\prod_{m=1}^{d-1}\d z_{m}.\]
This together with the fact that \eqref{eq:unifalter} ranges over $(0,\pi)$ for $m\in\zahl{d-2}$ and ranges over $(0,2\pi)$ for $m=d-1$
proves the distribution given by random spherical coordinates \eqref{eq:unifalter} 
is uniformly distributed over $\cS_{d-1}$.
\end{proof}

%To pick a random point on the surface of a unit sphere, it is incorrect to select spherical coordinates theta and phi from uniform distributions theta in [0,2pi) and phi in [0,pi], since the area element dOmega=sinphidthetadphi is a function of phi, and hence points picked in this way will be "bunched" near the poles. 
%The construction in Lemma 2.1 looks complicated is that a product of regular univariate grids over each polar coordinate would concentrate at the poles, hence does not lead to an asymptotically uniform limit. 

To obtain a particular construction of $\cG^{d}_{n_0,n_R,n_S}$, 
we expand the above approximation over the sphere 
to an approximating augmented grid for the ball. %{, and complete the construction of an approximating augmented grid when $d=1$. }
% To this end, below
% let's define the augmented grid (as a set)
% $\cG^{d}_{n_0,n_R,\bm{n_S}}$. %, such that the uniform distribution over $\cG^{d}_{n_0,n_R,\bm{n_S}}$ (defined later) weakly converges to $U_d$.

\begin{definition}\label{def:aug}
Assuming $d\ge2$, 
  let $r_{j}=j/(n_R+1)$ for $j\in\zahl{n_R}$, and define
  $\varphi_{m,j}$ for $m\in\zahl{d-1},j\in\zahl{n_m}$ as in
  \eqref{eq:coord}.  With notation
  $\bm{n_S}:=(n_1,\dots,n_{d-1})^{\top}$, the augmented grid
  $\cG^{d}_{n_0,n_R,\bm{n_S}}$ is the multiset consisting of $n_0$
  copies of the origin $\bm{0}$ whenever $n_0>0$ and the points
  $\mt_{j_R,j_1,\ldots,j_{d-1}}$ for $j_R\in\zahl{n_R},j_1\in\zahl{n_1},\ldots,j_{d-1}\in\zahl{n_{d-1}}$
  that have spherical coordinates
  $(r_{j_R},\varphi_{1,j_1},\dots,\varphi_{d-1,j_{d-1}})^\top$.
%{When $d=1$, let $n_S=2$, $n_R=\lfloor n/n_S\rfloor$, $n_0=n-n_Rn_S$, and $r_{j}=j/(n_R+1)$ for $j\in\zahl{n_R}$.  The augmented grid $\cG^{d}_{n_0,n_R,n_S}$ is the multiset consisting of $n_0$
%copies of the origin $0$ whenever $n_0>0$ and the points
%$\{\pm r_{j}: j\in\zahl{n_R}\}$.}
\end{definition}

The following proposition is an immediate corollary of
Lemma~\ref{lem:unif}.

\begin{proposition}\label{prop:unif-supp}
  The uniform discrete distribution on the augmented grid
  $\cG^{d}_{n_0,n_R,\bm{n_S}}$, which assigns mass $n_0/n$ to the
  origin and mass $1/n$ to every other grid point, 
%   The distribution with probability mass $n_0/n$ at the origin $\bm{0}$ and probability mass $1/n$ at each of points $\{\mt_{j_R,j_1,\ldots,j_{d-1}}; j_R\in\zahl{n_R},j_1\in\zahl{n_1},\ldots,j_{d-1}\in\zahl{n_{d-1}}\}$ with spherical coordinates $(r_{j_R},\varphi_{1,j_1},\dots,\varphi_{d-1,j_{d-1}})^\top$, where $r_{j}=j/(n_R+1)$ for $j\in\zahl{n_R}$ and $\varphi_{m,j}$'s are defined in \eqref{eq:coord}, 
% called the uniform distribution over the augmented grid $\cG^{d}_{n_0,n_R,\bm{n_S}}$, 
weakly converges to $U_d$.
\end{proposition}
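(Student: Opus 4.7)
The plan is to reduce the claim to Lemma~\ref{lem:unif} together with a separate one-dimensional argument for the radial component, and then absorb the contribution of the $n_0$ atoms at the origin via a total variation bound.

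First I would treat the case $n_0 = 0$. By the construction in Definition~\ref{def:aug}, the grid $\cG^{d}_{0,n_R,\bm{n_S}}$ is the image under the spherical-to-Cartesian map \eqref{eq:car} of the product set $\{r_{j_R} : j_R \in \zahl{n_R}\} \times \{(\varphi_{1,j_1},\dots,\varphi_{d-1,j_{d-1}}) : j_m \in \zahl{n_m}\}$. Since the $n_R$ radii and the $n_S$ spherical-coordinate tuples are paired in all $n_R n_S$ possible combinations with equal mass, the uniform distribution on $\cG^{d}_{0,n_R,\bm{n_S}}$ is the pushforward of the product of (i) the uniform discrete distribution $\mu_{n_R}$ on $\{j/(n_R+1) : j \in \zahl{n_R}\}$ and (ii) the uniform discrete distribution $\nu_{n_S}$ on the sphere points $\{\mt_{j_1,\dots,j_{d-1}}\}$ of Lemma~\ref{lem:unif}. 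A direct computation gives $\mu_{n_R}((0,x]) = \lfloor n_R x\rfloor/n_R \to x$, so $\mu_{n_R}$ converges weakly to the uniform distribution $\mu$ on $[0,1)$, while $\nu_{n_S}$ converges weakly to the uniform distribution $\nu$ on $\cS_{d-1}$ by Lemma~\ref{lem:unif}. Invoking the weak convergence of product measures on the separable space $[0,1) \times \cS_{d-1}$ (e.g., Theorem~2.8 of \citealp{MR1700749}) and then applying the continuous mapping theorem to the spherical-to-Cartesian map, we obtain weak convergence of the uniform distribution on $\cG^{d}_{0,n_R,\bm{n_S}}$ to $U_d$.

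Next I would handle $n_0 > 0$ by reusing the argument already used for Proposition~\ref{prop:unif} in the main text. Writing $\lambda_{0,n_R,\bm{n_S}}$ and $\lambda_{n_0,n_R,\bm{n_S}}$ for the two uniform distributions, for any $U_d$-continuity Borel set $D \subseteq \bS_d$ one has
\[
\big|\lambda_{0,n_R,\bm{n_S}}(D) - \lambda_{n_0,n_R,\bm{n_S}}(D)\big| \le \frac{2n_0}{n},
\]
by the same bookkeeping as in \eqref{eq:measurediff}. The constraint $n_0 < \min\{n_R, n_S\}$ in \eqref{eq:key}, together with $n_S \le n_R$ for our construction, gives $n_0/n \le n_S/(n_R n_S) = 1/n_R \to 0$. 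Combining this with the first step yields $\lambda_{n_0,n_R,\bm{n_S}}(D) \to U_d(D)$ for all continuity sets $D$, which is the desired weak convergence.

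The only substantive piece in this argument is Lemma~\ref{lem:unif}; everything else is routine once that lemma is available, which is why the proposition is stated as an immediate corollary. No step is a real obstacle here — the proof is essentially a two-line verification.
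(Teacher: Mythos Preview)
Your proposal is correct and follows essentially the same route as the paper: the paper states Proposition~\ref{prop:unif-supp} as an immediate corollary of Lemma~\ref{lem:unif}, meaning one plugs Lemma~\ref{lem:unif} into the hypothesis of Proposition~\ref{prop:unif} and reuses that proof verbatim, which is exactly what you spell out (product convergence on the radial and spherical factors, then the $2n_0/n$ bound for the origin atoms). Your invocation of the continuous mapping theorem for the polar-to-Cartesian map is a minor extra precision over the paper's phrasing, but the argument is the same.
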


\section{Additional numerical results}

\subsection{Critical values $Q_{1-\alpha}$}

We provide critical values $Q_{1-\alpha}$ at significance levels $\alpha=0.1,0.05,0.01$ 
for the dimensions $(p,q)=(1,1),(1,2),\ldots,(10,10)$  in Table~\ref{tab:critv}. Here all the critical values are estimated numerically with accuracy $5\cdot 10^{-3}$ using the method described in Section~\ref{subsec:eigv}.

\subsection{Additional simulation results}\label{sec:additional-sim}

\begin{example}\label{eg:power-add1}
The data are drawn from Example \ref{eg:sim-power1}(b) with $p=q=10$ and $30$. 
\end{example}

\begin{example}\label{eg:power-add}
The data are drawn such that $\mX$ is generated from standard multivariate normal distribution, and $Y_{ik}=\log(X_{ik}^2)$ for $i\in\zahl{n},k\in\zahl{p}$, with sample size
$n\in\{54,108,216,432\}$, dimensions $p=q\in\{2,3,5,7\}$. 
\end{example}

The values reported in Figure~\ref{fig:power-add1} and Table~\ref{tab:power-add} are based on $1,000$ simulations at the nominal significance level of $0.05$. Compared to Figure~\ref{fig:power1b}, Figure~\ref{fig:power-add1} further confirms that, compared to its competitors, the proposed test appears to be more sensitive to dimension. Table \ref{tab:power-add}  further showed that, in the setup of Example \ref{eg:power-add}, the test via distance covariance with marginal ranks achieves 
the highest power, while the proposed test works well as long as $n\ge 216$
even when $p=q=7$.

{
\renewcommand{\tabcolsep}{4pt}
\renewcommand{\arraystretch}{1.10}
\begin{table}[H]
\centering
\caption{Critical values $Q_{1-\alpha}$ at significance levels $\alpha=0.1,0.05,0.01$ for $(p,q)=(1,1),(1,2),\ldots,\\(10,10)$.}
\label{tab:critv}
{
\small
\begin{subtable}{\textwidth}
\caption{Critical values at significance level of $0.1$}
\centering
\begin{tabular}{c|cccccccccc}
\\[-2em]
\diagbox{$p$}{$q$} & $1$ & $2$ & $3$ & $4$ & $5$ & $6$ & $7$ & $8$ & $9$ & $10$    \\
\hline
$1$  & 0.306 & 0.215 & 0.172 & 0.149 & 0.133 & 0.122 & 0.113 & 0.106 & 0.101 & 0.095 \\
$2$  & 0.215 & 0.145 & 0.114 & 0.098 & 0.087 & 0.080 & 0.075 & 0.069 & 0.065 & 0.063 \\
$3$  & 0.172 & 0.114 & 0.090 & 0.077 & 0.069 & 0.063 & 0.059 & 0.055 & 0.052 & 0.049 \\
$4$  & 0.149 & 0.098 & 0.077 & 0.066 & 0.059 & 0.054 & 0.049 & 0.046 & 0.044 & 0.042 \\
$5$  & 0.133 & 0.087 & 0.069 & 0.059 & 0.052 & 0.047 & 0.044 & 0.041 & 0.039 & 0.037 \\
$6$  & 0.122 & 0.080 & 0.063 & 0.054 & 0.047 & 0.044 & 0.040 & 0.037 & 0.036 & 0.034 \\
$7$  & 0.113 & 0.075 & 0.059 & 0.049 & 0.044 & 0.040 & 0.037 & 0.035 & 0.034 & 0.032 \\
$8$  & 0.106 & 0.069 & 0.055 & 0.046 & 0.041 & 0.037 & 0.035 & 0.033 & 0.031 & 0.030 \\
$9$  & 0.101 & 0.065 & 0.052 & 0.044 & 0.039 & 0.036 & 0.034 & 0.031 & 0.030 & 0.028 \\
$10$ & 0.095 & 0.063 & 0.049 & 0.042 & 0.037 & 0.034 & 0.032 & 0.030 & 0.028 & 0.027 \\
\end{tabular}
\end{subtable}
\\[1em]
\begin{subtable}{\textwidth}
\caption{Critical values at significance level of $0.05$}
\centering
\begin{tabular}{c|cccccccccc}
\\[-2em]
\diagbox{$p$}{$q$} & $1$ & $2$ & $3$ & $4$ & $5$ & $6$ & $7$ & $8$ & $9$ & $10$    \\
\hline
$1$  & 0.490 & 0.320 & 0.249 & 0.211 & 0.187 & 0.172 & 0.156 & 0.146 & 0.139 & 0.130 \\
$2$  & 0.320 & 0.205 & 0.159 & 0.135 & 0.119 & 0.110 & 0.101 & 0.095 & 0.088 & 0.085 \\
$3$  & 0.249 & 0.159 & 0.124 & 0.105 & 0.093 & 0.086 & 0.079 & 0.073 & 0.069 & 0.066 \\
$4$  & 0.211 & 0.135 & 0.105 & 0.089 & 0.079 & 0.072 & 0.066 & 0.062 & 0.059 & 0.056 \\
$5$  & 0.187 & 0.119 & 0.093 & 0.079 & 0.070 & 0.064 & 0.059 & 0.055 & 0.052 & 0.049 \\
$6$  & 0.172 & 0.110 & 0.086 & 0.072 & 0.064 & 0.058 & 0.054 & 0.050 & 0.047 & 0.045 \\
$7$  & 0.156 & 0.101 & 0.079 & 0.066 & 0.059 & 0.054 & 0.049 & 0.047 & 0.044 & 0.042 \\
$8$  & 0.146 & 0.095 & 0.073 & 0.062 & 0.055 & 0.050 & 0.047 & 0.044 & 0.041 & 0.039 \\
$9$  & 0.139 & 0.088 & 0.069 & 0.059 & 0.052 & 0.047 & 0.044 & 0.041 & 0.039 & 0.037 \\
$10$ & 0.130 & 0.085 & 0.066 & 0.056 & 0.049 & 0.045 & 0.042 & 0.039 & 0.037 & 0.035 \\
\end{tabular}
\end{subtable}
\\[1em]
\begin{subtable}{\textwidth}
\caption{Critical values at significance level of $0.01$}
\centering
\begin{tabular}{c|cccccccccc}
\\[-2em]
\diagbox{$p$}{$q$} & $1$ & $2$ & $3$ & $4$ & $5$ & $6$ & $7$ & $8$ & $9$ & $10$    \\
\hline
$1$  & 0.945 & 0.563 & 0.421 & 0.349 & 0.303 & 0.273 & 0.250 & 0.232 & 0.219 & 0.208 \\
$2$  & 0.563 & 0.338 & 0.255 & 0.213 & 0.186 & 0.168 & 0.156 & 0.144 & 0.136 & 0.130 \\
$3$  & 0.421 & 0.255 & 0.194 & 0.162 & 0.142 & 0.131 & 0.119 & 0.111 & 0.105 & 0.100 \\
$4$  & 0.349 & 0.213 & 0.162 & 0.136 & 0.119 & 0.107 & 0.100 & 0.092 & 0.088 & 0.082 \\
$5$  & 0.303 & 0.186 & 0.142 & 0.119 & 0.105 & 0.095 & 0.088 & 0.083 & 0.077 & 0.072 \\
$6$  & 0.273 & 0.168 & 0.131 & 0.107 & 0.095 & 0.088 & 0.079 & 0.073 & 0.071 & 0.066 \\
$7$  & 0.250 & 0.156 & 0.119 & 0.100 & 0.088 & 0.079 & 0.073 & 0.069 & 0.066 & 0.061 \\
$8$  & 0.232 & 0.144 & 0.111 & 0.092 & 0.083 & 0.073 & 0.069 & 0.064 & 0.060 & 0.059 \\
$9$  & 0.219 & 0.136 & 0.105 & 0.088 & 0.077 & 0.071 & 0.066 & 0.060 & 0.057 & 0.055 \\
$10$ & 0.208 & 0.130 & 0.100 & 0.082 & 0.072 & 0.066 & 0.061 & 0.059 & 0.055 & 0.052 \\
\end{tabular}
\end{subtable}
}
\end{table}
}

\begin{figure}[H]
\centering
\includegraphics[width=\textwidth,trim={0 2.5in 0 0},clip]{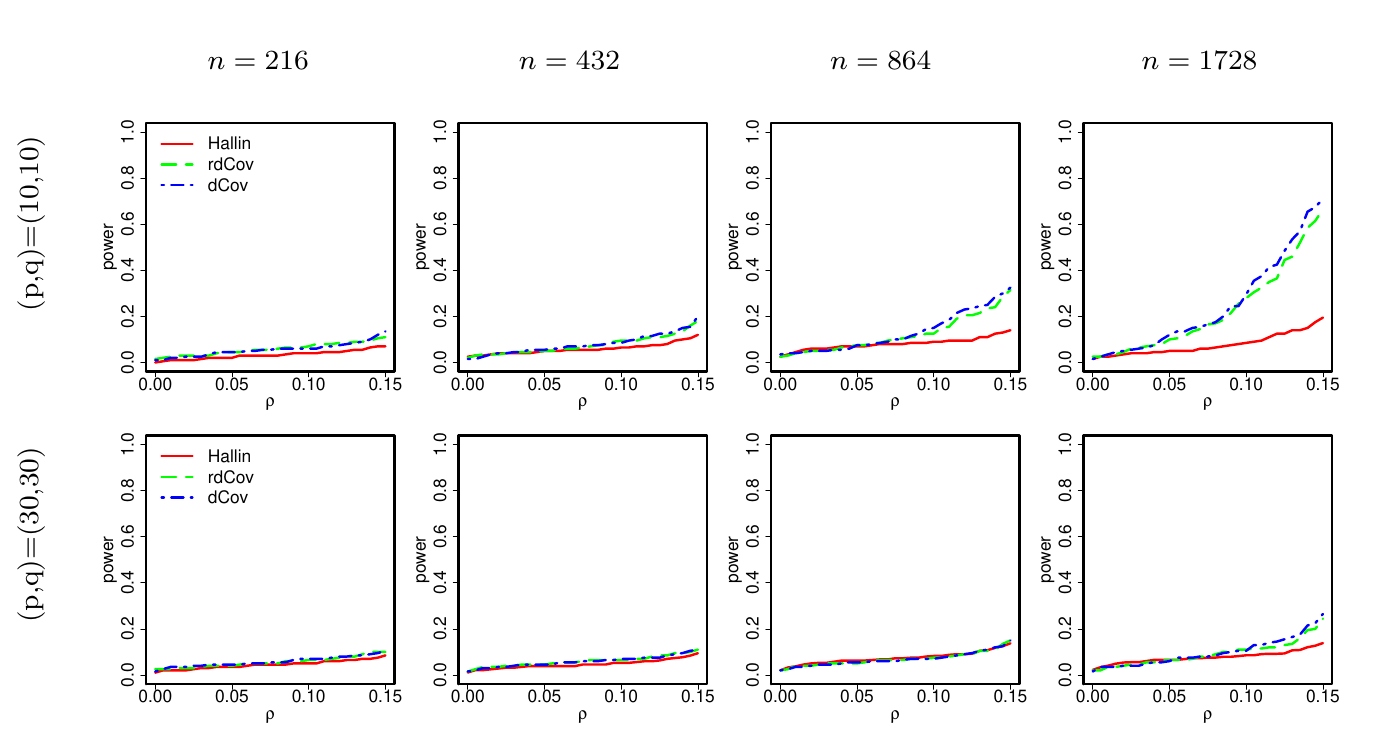}
\caption{Empirical powers of the three competing tests 
in Example~\ref{eg:power-add1}. 
The $y$-axis represents the power based on 1,000 replicates and 
the $x$-axis represents the level of a desired signal.}\label{fig:power-add1}
\end{figure}

{
\renewcommand{\tabcolsep}{4pt}
\renewcommand{\arraystretch}{1.10}
\begin{table}[H]
\centering
\caption{Empirical powers of the proposed test as well as two competing tests in Example~\ref{eg:power-add}.}
\label{tab:power-add}{
\small
\begin{tabular}{ccC{.55in}C{.55in}C{.55in}C{.55in}}
$(p,q)$ & $n$     & Hallin &  rdCov  &   dCov  \\
\hline
$(2,2)$ &   $54$  & 1.000  &  1.000  &  1.000   \\
$(2,2)$ &  $108$  & 1.000  &  1.000  &  1.000   \\
$(2,2)$ &  $216$  & 1.000  &  1.000  &  1.000   \\
$(2,2)$ &  $432$  & 1.000  &  1.000  &  1.000   \\
$(3,3)$ &   $54$  & 0.777  &  0.997  &  0.981   \\
$(3,3)$ &  $108$  & 1.000  &  1.000  &  1.000   \\
$(3,3)$ &  $216$  & 1.000  &  1.000  &  1.000   \\
$(3,3)$ &  $432$  & 1.000  &  1.000  &  1.000   \\
$(5,5)$ &   $54$  & 0.238  &  0.811  &  0.693   \\
$(5,5)$ &  $108$  & 0.888  &  1.000  &  1.000   \\
$(5,5)$ &  $216$  & 1.000  &  1.000  &  1.000   \\
$(5,5)$ &  $432$  & 1.000  &  1.000  &  1.000   \\
$(7,7)$ &   $54$  & 0.144  &  0.612  &  0.436   \\
$(7,7)$ &  $108$  & 0.496  &  0.973  &  0.950   \\
$(7,7)$ &  $216$  & 0.998  &  1.000  &  1.000   \\
$(7,7)$ &  $432$  & 1.000  &  1.000  &  1.000   \\
\end{tabular}}
%Results are averaged over $5000$ simulated data sets.
\end{table}
}

\clearpage

{\small
\bibliographystyle{apalike}
\bibliography{HrankdCov_ams}
}

%\newpage{}

%\section{Discussion}\label{sec:discussion}

%\subsection{Discussion of distance covariance}

\end{document}